\DeclareMathOperator\R{\mathbb R}
\DeclareMathOperator\C{\mathbb C}
\DeclareMathOperator\Z{\mathbb Z}
\DeclareMathOperator\id{\operatorname{id}}
\DeclareMathOperator\End{\operatorname{End}}
\DeclareMathOperator\Hom{\operatorname{Hom}}
\DeclareMathOperator\tr{\operatorname{tr}}
\newcommand{\arrowIn}{
	\tikz \draw[-stealth] (-1pt,0) -- (1pt,0);
}
\newtheorem{theorem}{Theorem}[section]
\newtheorem{lemma}[theorem]{Lemma}
\newtheorem{cor}[theorem]{Corollary}
\newtheorem{prop}[theorem]{Proposition}
\theoremstyle{definition}
\newtheorem{definition}[theorem]{Definition}
\newtheorem{example}[theorem]{Example}
\theoremstyle{remark}
\newtheorem{remark}[theorem]{Remark}
\newcommand{\dontprint}[1]\relax
\begin{document}
\title[Groupoid Intertwiner and Twist:part I]{Groupoid intertwiner and twist for dynamical Yang--Baxter equation: part I}
\author{Muze Ren}
\address{Section de mathématiques, University of Geneva,
	rue du Conseil-Général 7-9
	1205 Geneva, Switzerland.}
\email{muze.ren@unige.ch}
\maketitle	
\begin{abstract}
Intertwiner is a homomorphism between two existing dynamical $\check{R}$ matrices, first introduced by Baxter in eight vertex-SOS correspondence, we develop certain equivalence relations among $\check{R}$ matrices using intertwiners. 

Twist is a homomorphism that twist a dynamical $\check{R}$ matrix to get a new dynamical $\check{R}$ matrix, we introduce a kind of notion of twist that generalize classical Drinfeld twist in quasi-triangular Hopf algebra and some dynamical twist. As applications, we obtain some examples of twists from Ocneanu cell calculus and Fendley--Ginsparg orbifold constructions. The relations between intertwiner and twist are also discussed, the groupoid structures are emphasized.
\end{abstract}

\tableofcontents
\section{Introduction}
\subsection{Groupoid graded vector space}
\hfill\\
 The dynamical Yang--Baxter equation \eqref{eq:int_dYBE} was initially considered by Gervais and Neveu in the study of Liouville theory \cite{Gervais1984}. And later Felder in \cite{Felder1994,FelderICMP1995} rediscovered this equation and the classical dynamical Yang--Baxter equation and found the theory of elliptic quantum group and elliptic R matrix corresponding to the eight vertex SOS model \cite{Andrews1984}. Later the equations were widely studied in different aspects and have many connections to other fields, see for examples \cite{FelderVarchenko1996, FelderTarasovVarchenkoAMS1997,FelderTarasovVarchenko1999,EtingofVarchenko1998,EtingofVarchenkoCMP1998,EtingofVarchenko1999,Aganagic2021,CostelloWittenYamazakiI2018,CostelloWittenYamazakiII2018,Stokman2022}, also the standard books  \cite{EtingofSchiffmann1999,Etingof2005,Konno2020} and reference there.

\begin{equation}\label{eq:int_dYBE}
	\begin{split}
		&\check{R}^{(23)}(z-w,ah^{(1)})\check{R}^{(12)}(z,a)\check{R}^{(23)}(w,ah^{(1)})\\
		&=\check{R}^{(12)}(w,a)\check{R}^{(23)}(z,ah^{(1)})\check{R}^{(12)}(z-w,a)
	\end{split}
\end{equation}

There are many different types of dynamical $\check{R}(z,\lambda)$ matrices. With respect to the spectral parameters, we can roughly divided into the following type
\begin{itemize}
	\item Rational
	\item Trigonometric
	\item Elliptic
\end{itemize}
And with respect to the dynamical parameters $\lambda$, we can roughly divided them into the following
\begin{itemize}
	\item If $\lambda$ is trivial, these are the classical quantum group, see for example the book \cite{LusztigBook1993,ChariPressleyBook1994,KasselBook1995}.
	\item If $\lambda$ is unrestricted, this is the Elliptic quantum group and its various trigonometric and rational degeneration, for examples \cite{Felder1994,FelderICMP1995}.
	\item If $\lambda$ is restricted to a finite set, this is the restricted quantum group, for examples in \cite{Felder2020}.
\end{itemize}

These different types of solutions of dynamical Yang--Baxter equation can be seen as the representations of the usual Yang--Baxter equation \eqref{eq:int_YBE} on different types of vector spaces 
\begin{equation}\label{eq:int_YBE}
		\check{R}^{(23)}(z-w)\check{R}^{(12)}(z)\check{R}^{(23)}(w)=\check{R}^{(12)}(w)\check{R}^{(23)}(z)\check{R}^{(12)}(z-w)
\end{equation}

\begin{enumerate}
	\item If we use the usual vector spaces, there are certain elliptic solutions can not appear.\\
	\item If we use the $\Z$ graded or $\Z^n$ graded (for example some weight lattices) vector spaces, then more elliptic solutions appears, but not restricted ones.\\
	\item If we use the groupoid graded vector spaces which means more diverse and subtle gradings, then the restricted type solutions also appears.
\end{enumerate}

From the above perspectives, groupoid vector spaces language developed in \cite{Felder2020} is a tool to study the general types of $\check{R}(z,\lambda)$ matrices. 

See also the papers of Ocneanu \cite{Ocneanu1988,Ocneanu1991}, Goodman--Harpe--Jones \cite{Goodman-Harpe-Jones}, Pasquier \cite{Pasquier1987b,Pasquier1988a,Pasquier1988}, Pasquier--Saleur \cite{Pasquier1990a}, Roche \cite{Roche1990}, Francesco--Zuber \cite{DiFrancesco1990} and Kawahigashi \cite{Kawahigashi} from the perspective of path model of Temperley--Lieb algebra or the string algebras side to study the integrable system in statistic mechanics, which is more "global" in the sense of discussion of trace and boundary conditions and also many of them are without spectral parameters. And dynamical Yang-Baxter equation perspective is more local. The work \cite{Ren2023} is also try to understand the local nature of above works.

\subsection{Intertwiner}
\hfill\\
Intertwiner is a homomorphism which relates two different dynamical $\check{R}$ matrices. The concept of intertwiner that we studied here was first introduced by Baxter in \cite{Baxter1973}, where he shows the equivalence between "zero field" 8 vertex model and a SOS model, the equivalence here means that with carefully chosen parameters and boundary conditions the partition function of these two models are the same. 

Using the groupoid graded vector space language, it is equivalent to say that there exists an intertwiner $C$ such that we have the following relation \eqref{eq:intro_RCC} and we take $\check{R}_1$ to be the dynamical $\check{R}$ matrix $\check{R}^{\text{8v}}$ of zero field 8 vertex model, take $\check{R}_2$ to be the dynamical $\check{R}$ matrix correspinding to the SOS model $\check{R}^{\text{sos}}$.
\begin{equation}\label{eq:intro_RCC}
	C(w)^{(12)}C(z)^{(23)}\check{R}^{\text{8v}}_1(z-w)^{(12)}=\check{R}^{\text{sos}}_2(z-w)^{(23)}C^{(12)}(z)C^{(23)}(w)
\end{equation}

For physics considerations, the transfer matrix is more interesting, taking the partial trace, we have the relations between "transfer matrix", $*_{\pi_1},*_{\pi_2}$ are certain convolution product.	\begin{equation}\label{eq:intr_s_RC_CR}
	\overline{\operatorname{tr}}_{V^{\pi_1}}\check{R}_1(z)*_{\pi_1}\overline{\operatorname{tr}}_{V^{\pi}}C(z')=\overline{\operatorname{tr}}_{V^{\pi}}C(z')*_{\pi_2}\overline{\operatorname{tr}}_{V^{\pi_2}}\check{R}_2(z).
\end{equation}
In the situations that $\check{R}_1,\check{R}_2$ are indeed usual matrices, from this equation, we know that some "eigenvalues" and "eigenvectors" of $\check{R_1}$ and $\check{R_2}$ are the same, which will implies certain equivalence of the partition function. 

{\bf{Relation with quantum groups}.}
\hfill\\
The basic $RLL$ algebraic formulae of the quantum inverse problem method developed \cite{Faddeev1998,Faddeev1988} by Faddeev, Reshetikhin, Takhtadzhyan can be got by taking the $\check{R}_1=\check{R}_2$ in \eqref{eq:intr_s_RC_CR} and we use $L$ for $C$, we will have
\begin{equation}\label{eq:RLL}
	L(w)^{(12)}L(z)^{(23)}\check{R}(z-w)^{(12)}(z)=\check{R}(z-w)^{(23)}L^{(12)}(z)L^{(23)}(w)
\end{equation}

{\bf{Relation with fusion operations}}.
\hfill\\
Suppose that we have an ($\check{R}_1$,$\check{R}_2$) intertwiner $C$ and ($\check{R}_2,\check{R}_3$) intertwiner $\widehat{C}$, their composition $\widehat{C}^{(23)}C^{(12)}$ is an ($\check{R}_1,\check{R}_3$) intertwiner which means that
\begin{equation}
	\widehat{C}^{(23)}C^{(12)}\widehat{C}^{(34)}C^{(23)}\check{R}_1(z)^{(12)}=\check{R}_3(z)^{(34)}\widehat{C}^{(23)}C^{(12)}\widehat{C}^{(34)}C^{(23)}
\end{equation}

Various fusion construction in statistic mechanics, for example \cite{Date1987} and also the coproduct operations constructed in \eqref{eq:RLL} which leads to the Hopf algebra formalism can be seen as certain applications of this fact. 

{\bf{Equivalence relation}}
\hfill\\
In the above, we discuss the composition property of intertwiners, we can also consider the tranpose relation to make the existence of intertwiners an equivalence relation. We define the transpose of the $\check{R}$ matrix and the transpose of an intertwiner, which means taking the inverse directions from $\check{R}_2$ to $\check{R}_1$. And with the assumption that $\check{R}_1=\check{R}^T_1$ and $\check{R}^T_2=\check{R}_2$, this is an equivalence relation. If we further assume that the intertwiners are invertible, we still have an equivalence relation, we call it the transfer matrices equivalence.

From the perspective of conformal field theory and integrable lattice models, the intertwiner are also widely studied, see for examples the work of Ocneanu \cite{Ocneanu1988,Ocneanu1991}, Pasquier \cite{Pasquier1987b,Pasquier1988a,Pasquier1988}, Pasquier--Saleur \cite{Pasquier1990a},  Roche\cite{Roche1990}, Francesco--Zuber \cite{DiFrancesco1990} and Pearce--Zhou \cite{Pearce1993}.

In the fundamental work of Baxter \cite{Baxter1973}, actually the intertwiners and the $\check{R}^{\text{sos}}$ are all constructed. Implicitly it suggested that from an existing $\check{R}$ matrices and intertwiners, based on some properties of intertwiner, we could build a new dynamical $\check{R}$ matrix. And one of the motivation to study the twist is to understand what conditions we need have, in order to have a new $\check{R}$ matrix that satisfies the dynamical Yang--Baxter equation.

\subsection{Twist}
\hfill\\
Twist is a homomorphism which transform one dynamical $\check{R}$ matrix to get a new dynamical $\check{R}$ matrix.

{\bf{Twist non-dynamical $\check{R}$ matrices to non-dynamical $\check{R}$ matrices}}
\hfill\\
The first step of the theory of twist is to twist a non-dynamical $\check{R}$ matrix to a non-dynamical $\check{R}$ matrix. In the theory of quasi-triangular quasi-Hopf algebra \cite{Drinfeld1991}, Drinfeld introduced this kind notion of twist.
See also the notion of fiber functors in category theory, see chapter 5 of the book \cite{Etingof2015} and the historical remarks and reference there. 

For a quasitriangular quasi-Hopf algebra $(A,\Delta,\epsilon,\Phi,R)$, let $J$ be an invertible element of $A\otimes A$ such that $(\id\otimes \epsilon)(J)=1=(\epsilon\otimes \id)(J)$, the equation (1.10), (1.11) and (1.12) in \cite{Drinfeld1991} are
\begin{subequations}
\begin{equation}
	\widetilde{\Delta}(a)=J\Delta(a)J^{-1},
\end{equation}
\begin{equation}
\widetilde{\Phi}=J^{(23)}(\id\otimes \Delta)(J)\Phi(\Delta\otimes \id)(J^{-1})(J^{12})^{-1},
\end{equation}
\begin{equation}
\widetilde{R}=J^{21}RJ^{-1},
\end{equation}
\end{subequations}
we obtain a new quasitriangular quasi-Hopf algebra $(A,\widetilde{\Delta},\epsilon,\widetilde{\Phi},\widetilde{R})$. If we assume that $\Phi$ always be 1, we goes to the quasi-triangular Hopf algebra, then in order to let $\widetilde{\Phi}$ be 1, we got the following twist equation of $J$, (2 cocycle equation)
\begin{equation}\label{eq:int_twist_1}
	(\Delta\otimes \operatorname{id})(J)(J\otimes 1)=(\operatorname{id}\otimes \Delta)(J)(1\otimes J)
\end{equation}

{\bf{Twist non-dynamical $\check{R}$ matrices to dynamical $\check{R}$ matrices}}
\hfill\\
In the work of \cite{Babelon1991,Babelon1996}, Babelon, Bernard and Billy generalize the notion of twist that can twist in $\mathfrak{sl}_2$ case, a non-dynamical $\check{R}$ matrix to a dynamical $\check{R}$ matrix, in this case, they got the dynamical twist equation, the shifted cocycle equation (4) in \cite{Babelon1996}.
\begin{equation}\label{eq:intro_twist_2}
[(\Delta\otimes\id)J]\cdot [J(xq^3)\otimes \id]=\big((\id\otimes \Delta)J)\cdot [\id\otimes J]
\end{equation}
and the "shifted coboundary" equation (5) in \cite{Babelon1996},
\begin{equation}
J_{12}(x)=\Delta M(x)[\id\otimes M(x)]^{-1}[M(xq^{H_2})\otimes \id]^{-1}.
\end{equation}

Using this formalism, there are many important results, for example the Arnaudon--Buffenoir--Rogoucy--Roche equation \cite{Arnaudon1998} was found and also \cite{Jimbo1999} Jimbo--Konno--Odake--Shiraishi twistor between the elliptic vertex type and face type $\check{R}$ matrices. For the general case beyond $\mathfrak{sl}_2$, they were done by Etingof--Varchenko \cite{EtingofVarchenko1999}.

{\bf{Twist dynamical $\check{R}$ matrices to dynamical $\check{R}$ matrices}}
\hfill\\
In order to twist dynamical $\check{R}$ matrices to dynamical $\check{R}$ matrices based on the groupoid graded vector spaces language, we need to find the twist equations which generalize the equation \eqref{eq:int_twist_1} and \eqref{eq:intro_twist_2}.

We first reformulate \eqref{eq:int_twist_1}, an invertible operator $J:V\otimes V\to V\otimes V$ is a twist if there exists an invertible operator $Q\in \End(V\otimes V\otimes V)$ such that
\begin{equation}\label{eq:intr_twist_3}
\begin{split}
&\check{R}_J=J^{-1}\check{R}J\\
&\check{R}_J\otimes\operatorname{id}=Q(\check{R}\otimes\operatorname{id})Q^{-1}\\
&\operatorname{id}\otimes \check{R}_J=Q(\operatorname{id}\otimes\check{R})Q^{-1}\\
\end{split}
\end{equation}
The relation between \eqref{eq:int_twist_1} and \eqref{eq:intr_twist_3} is in the section \ref{sec:Drinfeld_twist}. Similarly, we reformulate the equation \eqref{eq:intro_twist_2} as the following \eqref{eq:int_twist_4}.
\begin{equation}\label{eq:int_twist_4}
	\begin{aligned}
		&\check{R}_J=J^{-1}\check{R}J\\
		&\check{R}_J(\lambda)\otimes \operatorname{id}=Q(\lambda)(\check{R}\otimes \operatorname{id})Q^{-1}(\lambda)\\
		&\operatorname{id}\otimes \check{R}_J(\lambda-h^1)=Q(\lambda)(\operatorname{id}\otimes \check{R})Q^{-1}(\lambda)	
	\end{aligned}
\end{equation}
The relation between \eqref{eq:intr_twist_3} and \eqref{eq:int_twist_4} is in 
the section \ref{sec:dynamical_twist}. 

One obstacle when we want to twist dynamical $\check{R}$ matrix on a groupoid $\pi_1$ to get a new dynamical $\check{R}$ matrix on another groupoid $\pi_2$ is to relate the two different groupoid structures. We first introduce some groupoid $\pi$ that connects $\pi_1$ and $\pi_2$, then we introduce the notion of connecting system which are arrows in $\pi$ that relate the paths in these two groupoid. We distinguish two cases, the unique connecting system in subsection \ref{sub:unique_twist} and the quasi-unique connecting system in the subsection \ref{sub:quasi_unique}. For the unique connecting system, the twist equations is similar to \eqref{eq:intr_twist_3} and \eqref{eq:int_twist_4}.

For a quasi-unique connecting system and an invertible map $j$, we can define 
\begin{equation}
	\check{R}_2(\beta_2,\beta_1)=j^{-1}*_{\circ}(\check{R_1}j):=\sum_{\beta}j^{-1}(\beta,\beta_2)\check{R}_1j(\beta_1,\beta),
\end{equation} 
$j$ is said to be a twist if it satisfies the following two conditions 
\begin{itemize}
	\item We have the weight 0 conditions \eqref{eq:intro_ice_rule} for any $\beta_2$ with $t_{\pi}(\beta_2)=s_{\pi_2}(\gamma)$ and any $s_{\pi}(\beta'_2)=s_{\pi}(\beta_2)$,graphically it is \eqref{eq:graph_uqtwist}.\\
	\item The factorization property \eqref{eq:intro_factorization} for invertible $q_1,q_2,q$ and path $\gamma$ in $\pi_2$, graphically it is \eqref{eq:graph_qutwist_q23}.
\end{itemize} 
\begin{subequations}\label{eq:intro_ice_rule}
	\begin{equation}\label{eq:intro_quntwist23}
		\operatorname{id}_{V^{\pi_2}}\otimes (\check{R}_2)_{\lhd}\delta_{\beta_2,\beta'_2}=\big(q_2^{-1}*_{\circ}\big((\operatorname{id}_{V^{\pi_1}}\otimes \check{R}_1)q_2\big)\big)(\beta_2,\beta'_2)
	\end{equation}
	\begin{equation}\label{eq:intro_quntwist12}
		(\check{R}_2)_{\lhd}\otimes \operatorname{id}_{V^{\pi_2}}\delta_{\beta_2,\beta'_2}=\big(q_2^{-1}*_{\circ}\big((\check{R}_1\otimes \operatorname{id}_{V^{\pi_1}})q_2\big)\big)(\beta_2,\beta'_2)
	\end{equation}
\end{subequations} 

\begin{subequations}\label{eq:intro_factorization}
	\begin{equation}
	q|_{(V^{\pi_2})^{\otimes n}_{\gamma\gamma'}}=q_1|_{(V^{\pi_2})^{\otimes n-3}_{\gamma'}}*_{\otimes}q_2|_{(V^{\pi_2})^{\otimes 3}_{\gamma}}
	\end{equation}
	\begin{equation}\label{eq:intro_qutwist_q23}
		\operatorname{id}^{\otimes n-2}_{V^{\pi_2}}\otimes (\check{R}_2)_{\lhd}|_{(V^{\pi_2})^{\otimes n}_{\gamma\gamma'}}=(q^{-1}*_{\circ}\big((\operatorname{id}^{\otimes n-2}_{V^{\pi_1}}\otimes \check{R}_1)q\big))_{\lhd}
	\end{equation}
	\begin{equation}\label{eq:intro_qutwist_q12}
		\id^{\otimes n-3}_{V^{\pi_2}}\otimes (\check{R}_2)_{\lhd}\otimes \operatorname{id}_{V^{\pi_2}}|_{(V^{\pi_2})^{\otimes n}_{\gamma\gamma'}}=(q^{-1}*_{\circ}\big((\id^{\otimes n-3}_{V^{\pi_1}}\otimes \check{R}_1\otimes \operatorname{id}^{\otimes n-2}_{V^{\pi_1}})q\big))_{\lhd}
	\end{equation}
\end{subequations}

\hfill\\

The structure of the paper is organized as follows. In section \ref{sec:prelim}, we introduce the main setting of groupoid graded vector space, the YBE and dynamical YBE. In section \ref{sec:intertwiner}, we first develop the language of modules of convolution algebra in \ref{sub:module_convol} in order to express "relation between transfer matrix" precisely. And then in subsection \ref{sub:int_bet_YBE}, \ref{sub:weight_zero_relation} and \ref{sub:transposed}, we develop the equivalence relations and analyze the compatibility relations implied by the existence of intertwiner. In subsection \ref{sub:intertwiner_spectral} and \ref{sub:8v_sos_example}, we study the intertwiner with spectral parameters and analyze the example of eight vertex sos intertwiner and the equivalence relation.

In section \ref{sec:Twist}, we first develop the language of transfer operators and its operations in \ref{sub:transfer_operator} and then define the twist and connecting system in \ref{sub:unique_twist} and \ref{sub:quasi_unique}. In subsection \ref{sub:intertwiner_from_twist}, we discuss the relation between intertwiner and twist.

In section \ref{sec:Drinfeld_twist},\ref{sec:dynamical_twist} and \ref{sec:example_cell_system}, we discuss the examples of different type of twists.

\hfill\\
{\bf Acknowledgment} The author would like to thank Anton Alekseev for discussion on associators, thank Giovanni Felder and Anic Jelena for invitations to Zurich, introducing their works and discussions. Research of the author is supported by the grant number 208235 of the Swiss National Science Foundation (SNSF) and by the NCCR SwissMAP of the SNSF.

\section{Preliminary}\label{sec:prelim}
We first recall the groupoid language developed in \cite{Felder2020}, the definition of Yang--Baxter operator and dynamical Yang--Baxter operator in this setting.

\subsection{Groupoid graded vector space}\label{sub:groupoid_graded}
A groupoid $\pi$ is a small category in which every morphism is invertible. We denote its object by $\text{Ob}(\pi)$, for any $a,b\in \text{Ob}(\pi)$, the set of morphisms from an object $a$ to $b$ is denoted by $\pi(a,b)$, the morphisms are also called as arrows, the composition of arrows $\gamma\in \pi(a,b)$ and $\eta\in \pi(a,b)$ is denoted by $\eta\circ \gamma\in \pi(a,c)$. The object of $\pi$ is identified with the identity arrows. 

By abuse of notation, the set of arrows of $\pi$ is also denoted by $\pi$, then we can denote the source and target maps by $s,t:\pi\to \text{Ob}(\pi)$. For $a\in \rm{Ob}(\pi)$, the source fibers are defined by $s^{-1}(a)$ and the target fibers are $t^{-1}(a)$.

\begin{example}\label{ex:graph}
	For any unoriented graph $\mathcal{G}$, it can be seen as a groupoid $\pi(\mathcal{G})$ by taking the vertices as objects, for each unoriented edge $e\in \mathcal{G}$, we have corresponding two inverse direction arrows $\alpha_{e},\alpha^{-1}_{e}\in \pi(\mathcal{G})$.
\end{example}

\begin{example}[Action groupoid]
	Let $A$ be a set with right group action $A\times G\to A$, the action groupoid $A\rtimes G$ has the set of objects $A$ and for each $a'=ag$, there is an arrow $a\xrightarrow{g}a'$, thus an arrow is described by a pair $(a,g)\in A\times G$. The source and target are $s(a,g)=a, t(a,g)=ag$ and the composition is
\begin{align*}
	(a',g')\circ (a,g)=(a,gg'),\quad \text{with}~a'=ag,
\end{align*}
the identity arrows are $(a,e),a\in A$ and the inverse of $(a,g)$ is $(ag,g^{-1})$.
\end{example}

With groupoids, we can define the groupoid graded vector spaces with certain finite conditions.

\begin{definition}
Let $\pi$ be a groupoid, a $\pi$-graded vector space of finite type over a field $k$ is a collection $(V_{\alpha})_{\alpha\in \pi}$ of finite dimensional vector spaces indexed by the arrows of $\pi$, such that for each $a\in \rm{Ob}(\pi)$, there are only finitely many arrows $\alpha$ with source or target $a$ and nonzero $V_{\alpha}$. For $a\in \rm{Ob}(\pi)$, the $a$ source fibers of the vector space $V\in \pi$ is the direct sum of components with source $a$, that is  $\oplus_{\alpha\in s^{-1}(a)}V_{\alpha}$. And similar for the target fibers.

\end{definition}

The $k$ vector space $\Hom(V,W)$ of two $\pi$ graded vector spaces consists of families $(f_{\alpha})_{\alpha\in \pi}$ of linear maps $f_{\alpha}:V_{\alpha}\to W_{\alpha}$, that is $f_{\alpha}\in \End_{k}(V)$, the composition is also defined componentwise. The category of $\pi$ groupoid graded vector spaces is a monoidal category, we denote by $\text{Vect}_k(\pi)$, with the tensor product defined
\begin{align*}
	(V\otimes W)_{\gamma}=\oplus_{\beta\circ \alpha=\gamma}V_{\alpha}\otimes W_\beta,\quad V,W\in \text{Vect}_k(\pi)
\end{align*}

For the $k$ additive category $\rm{Vect}_{k}(\pi)$, we can also define the dual groupoid graded vector space and the resulting category is an abelian pivotal monoidal category, see section 2 of \cite{Felder2020}.

\subsection{Yang--Baxter operator and its restriction on source fibers}\label{sub:YBE_operator}
Let $k=\C$, a \emph{Yang-Baxter operator} on $V\in \text{Vect}_k(\pi)$ is a meromorphic function $z\to \check{R}(z)\in \End(V\otimes V)$ of the spectral parameter $z\in \C$ with values in the endormorphisms of $V\otimes V$, obeying the Yang--Baxter equation
\begin{align}\label{eq:YBE}
	\check{R}(z-w)^{(23)}\check{R}(z)^{(12)}\check{R}(w)^{(23)}=\check{R}(w)^{(12)}\check{R}(z)^{(23)}\check{R}(z-w)^{(12)}
\end{align}
and also the inversion relation
\begin{equation}\label{eq:inversion_R}
\check{R}(z)\check{R}(-z)=\id_{V\otimes V}
\end{equation}

The restriction of $\check{R}(z)$ to $V_{\alpha}\otimes V_{\beta}$ for composable arrows $\alpha,\beta$ has components in each direct summand of the decomposition
\begin{align*}
	\check{R}(z)|_{V_{\alpha}\otimes V_{\beta}}=\oplus_{\gamma,\delta}\mathcal{W}(z;\alpha,\beta,\gamma,\delta),
\end{align*}
the sum is over $\gamma,\delta$ such that $\beta\circ \alpha=\delta\circ \gamma$ with the component $\mathcal{W}(z;\alpha,\beta,\gamma,\delta)\in \Hom_k(V_{\alpha}\otimes V_{\beta},V_{\gamma}\otimes V_{\delta})$, graphically it is simply presented as the square \eqref{eq:square}.
\begin{equation}\label{eq:square}
\begin{tikzcd}
a_4\arrow[d,"V_{\alpha}"']\arrow[rd,phantom,"\mathcal{W}"]\arrow[r,"V_{\gamma}"]&a_3\arrow[d,"V_{\delta}"]\\
a_1\arrow[r,"V^{\beta}"']&a_2
\end{tikzcd}
\end{equation}

When restricting to the source fibers  $s^{-1}(a)$ of $V\otimes V$, $a\in \text{Ob}(\pi)$, the Yang--Baxter operator $\check{R}(z)$ can be written as $\check{R}(z,a)$ and 
\[
\check{R}(z,a)\in \oplus_{\alpha\in s^{-1}(a)}\End_{k}\big((V\otimes V)_{\alpha}\big),
\]
the Yang--Baxter equation then turns to the following dynamical Yang-Baxter equation
\begin{equation}\label{eq:dYBE}
	\begin{split}
		&\check{R}^{(23)}(z-w,ah^{(1)})\check{R}^{(12)}(z,a)\check{R}^{(23)}(w,ah^{(1)})\\
		&=\check{R}^{(12)}(w,a)\check{R}^{(23)}(z,ah^{(1)})\check{R}^{(12)}(z-w,a)
	\end{split}
\end{equation}
here we use the "dynamical" notation with the placeholder $h^{(i)}$:
\begin{align*}
	\check{R}^{(23)}(ah^{(1)})(u\otimes v\otimes w)=u\otimes \check{R}(t(\alpha_1))(v\otimes w),\quad \text{if} \quad u\in V_{\alpha_1},s(\alpha_1)=a
\end{align*}

\begin{example}

In the case of an action groupoid $\pi=A\rtimes G$ and its subgroupoids, the operator can be written explicitly as
\begin{equation}
	\check{R}(z,d)\in \oplus_{g\in G}\End_{k}((V\otimes V)_{(d,g)}),\quad (V\otimes V)_{(d,g)}=\sum_{h\in G}V_{(d,h)}\otimes V_{(dh,h^{-1}g)},
\end{equation}
for any composable edges, we have
\begin{align*}
	\check{R}(z,d)|_{V_{(d,g_1)}\otimes V_{(a,g_2)}}=\oplus_{(d,g_3),(c,g_4)}\mathcal{W}\big(x;(d,g_1),(a,g_2),(d,g_3),(c,g_4)\big),
\end{align*} 
where  $\mathcal{W}\big(x;(d,g_1),(a,g_2),(d,g_3),(c,g_4)\big)\in \Hom_k(V_{(d,g_1)}\otimes V_{(a,g_2)},V_{(d,g_3)}\otimes V_{(c,g_3)})$, graphically it is the following with $d$ in the left upper corner.

\begin{equation}
\begin{tikzcd}
d\arrow[r,"V_{(d,g_3)}"]\arrow[d,"V_{(d,g_1)}"']&c\arrow[d,"V_{(c,g_4)}"]\\
a\arrow[r,"V_{(a,g_2)}"']&b
\end{tikzcd}
\end{equation}
\end{example}

\section{Intertwiner}\label{sec:intertwiner}
For two groupoids $\pi_1,\pi_2$, suppose that we have a set $\pi$ with the map  $\operatorname{Ob}(\pi_1)\stackrel{s_{\pi}}\leftarrow \pi\stackrel{t_{\pi}}\rightarrow\operatorname{Ob}(\pi_2)$, with the source and target map defined, we can view $\pi$ as arrows directing from $\pi_1$ to $\pi_2$,
\begin{equation*}
\begin{tikzcd}
\rm{Ob}(\pi_1)\arrow[r,"\pi"]&\rm{Ob}(\pi_2),
\end{tikzcd}
\end{equation*} we further assume that $t_{\pi}$ and $s_{\pi}$ are surjective. The left, right and left-right fiber products are formed respectively by
\begin{equation}\label{eq:left_module}
	\pi_1\times_{\rm{Ob}(\pi_1)}\pi:=\{(\alpha,\beta)|t_{\pi_1}(\alpha)=s_{\pi}(\beta)\}
\end{equation}
\begin{equation}\label{eq:right_module}
	\pi\times_{\rm{Ob}(\pi_2)}\pi_2:=\{(\beta,\gamma)|t_{\pi}(\beta)=s_{\pi_2}(\gamma)\}
\end{equation}
\begin{equation}\label{eq:bimodule}
\pi_1\times_{\rm{Ob}(\pi_1)}\pi\times_{\rm{Ob}(\pi_2)}\pi_2:=\{(\alpha,\beta,\gamma)|t_{\pi_1}(\alpha)=s_{\pi}(\beta),t_{\pi}(\beta)=s_{\pi_2}(\gamma)\}
\end{equation}

The left $\pi_1$-module generated by the set $\pi$ is the set $\pi_1\times_{\rm{Ob}(\pi_1)}\pi$ together with the action map
\begin{equation}\label{eq:left_action}
\begin{split}
&\pi_1*_{\pi_1} \big(\pi_1\times_{\rm{Ob}(\pi_1)}\pi\big)\to \big(\pi_1\times_{\rm{Ob}(\pi_1)}\pi\big)\\
&\alpha_1*_{\pi_1}(\alpha,\beta)\mapsto (\alpha\circ \alpha_1,\beta),
\end{split}
\end{equation}
and similarly definition for the right $\pi_2$ module structure on $\pi\times_{\rm{Ob}(\pi_2)}\pi_2$ and the $(\pi_1,\pi_2)$ bimodule structure on the set $\pi_1\times_{\rm{Ob}(\pi_1)}\pi\times_{\rm{Ob}(\pi_2)}\pi_2$.

By viewing the object of a groupoid as identity arrow, we also have the following relations
\begin{subequations}
\begin{equation}
	\pi_1\times_{\rm{Ob}(\pi_1)}\pi \subset \pi_1\times_{\rm{Ob}(\pi_1)}\pi\times_{\rm{Ob}(\pi_2)}\pi_2
\end{equation}
\begin{equation}
	\pi\times_{\rm{Ob}(\pi_2)}\pi_2\subset \pi_1\times_{\rm{Ob}(\pi_1)}\pi\times_{\rm{Ob}(\pi_2)}\pi_2
\end{equation}
\end{subequations}

For simplicity of notations, sometimes we use $\pi$ to also denote the bimodule  $\pi_1\times_{\rm{Ob}(\pi_1)} \pi\times_{\rm{Ob}(\pi_2)} \pi_2$ generated by $\pi$, then the bimodule structure will be denoted by $\pi_1*_{\pi_1}\pi*_{\pi_2}\pi_2$.

\subsection{Modules of convolution algebra}\label{sub:modules_of_convolution_algebra}
\begin{definition}[\cite{Felder2020}]\label{sub:module_convol}
A $\pi_1$ graded algebra $R$ over $k$ is a collection $(R_{\alpha})_{\alpha\in \pi_1}$ of $k$ vector spaces labeled by arrows of $\pi_1$ with bilinear products $R_{\alpha}\times R_{\beta}\to R_{\beta\circ \alpha},(x,y)\to xy$ defined for composable arrows $\alpha,\beta$ and units $1_a\in R_{a}$, for $a\in \operatorname{\pi_1}$ such that $(i) (xy)z=x(yz)$ whenever defined and $(ii) x1_b=x=1_ax$ for all $x\in R_{\alpha}$ of degree $\alpha\in \pi_1(a,b)$.
\end{definition}

The prototype of groupoid graded algebra and the ones that we will use are the following two examples.

\begin{example}\label{exa:lower_case_End}
Let $V^{\pi_1}\in \text{Vect}_k(\pi_1)$ and let $\underline{\operatorname{End}}V^{\pi_1}$ be the $\pi_1$ graded vector space with $(\underline{\operatorname{End}}V^{\pi_1})_{\alpha_1}=\oplus_{\alpha\in \pi(a_1,a_1)}\operatorname{Hom}(V^{\pi_1}_{\alpha_1\circ\alpha\circ\alpha_1^{-1}},V^{\pi_1}_{\alpha})$ where $a_1=s(\alpha_1)$. Let $\alpha'=\alpha_1\alpha\alpha_1^{-1}$ and $\alpha''=\alpha_2\alpha_1\alpha\alpha_1^{-1}\alpha_2^{-1}$, then the product of $\underline{\operatorname{End}}V^{\pi_1}$ is given by the composition of linear maps
\begin{equation}\label{eq:com_End_pi_1}
	\operatorname{Hom}_k(V^{\pi_1}_{\alpha'},V^{\pi_1}_{\alpha})\otimes \operatorname{Hom}_{k}(V^{\pi_1}_{\alpha''},V^{\pi_1}_{\alpha'})\to \operatorname{Hom}_{k}(V^{\pi_1}_{\alpha''},V^{\pi_1}_{\alpha}),
\end{equation}
the equation \eqref{eq:com_End_pi_1} graphically can be described by \eqref{eq:graph_com_End} and the direction of the maps is from bottom to top.
\begin{equation}\label{eq:graph_com_End}
\begin{tikzcd}
	a_1\arrow[r,"V^{\pi_1}_{\alpha}"]\arrow[d,"\alpha_1"]&a_1\arrow[d,"\alpha_1"]\\
	a_2\arrow[r,"V^{\pi_1}_{\alpha'}"]\arrow[d,"\alpha_2"]&a_2\arrow[d,"\alpha_2"]\\
	a_3\arrow[r,"V^{\pi_1}_{\alpha''}"]&a_3
\end{tikzcd}
\quad
\begin{tikzcd}
{}&\\
&\\
{}\arrow[uu,"\rm{direction}"']&
\end{tikzcd}
\end{equation}
and unit $1_{\alpha_1}=\oplus_{\alpha\in \pi(a_1,a_1)}\operatorname{id}_{V_{\alpha}}$ is a $\pi_1$ graded algebra.
\end{example}

\begin{example}\label{exa:upper_case_End}
Let $V^{\pi_1}\in \text{Vect}_k(\pi_1)$ and let $\overline{\End}V^{\pi_1}$ be the $\pi_1$ graded vector space with $(\overline{\End})_{\alpha_1}=\oplus_{\alpha\in \pi(a_1,a_1)}\Hom(V^{\pi_1}_{\alpha},V^{\pi_1}_{\alpha_1\circ\alpha\circ\alpha^{-1}_1})$, where $a_1=s(\alpha_1)$. Let $\alpha'=\alpha_1\circ\alpha\circ\alpha^{-1}_1$ and $\alpha''=\alpha_2\alpha_1\alpha\alpha^{-1}_1\alpha^{-1}_2$, then the product of $\overline{\End}V^{\pi_1}$ is given by the composition of linear maps
\begin{equation}\label{eq:com_end_pi_1}
\Hom(V^{\pi_1}_{\alpha'},V^{\pi_1}_{\alpha''})\otimes\Hom_k(V^{\pi_1}_{\alpha},V^{\pi_1}_{\alpha'})\to \Hom_k(V^{\pi_1}_{\alpha},V^{\pi_1}_{\alpha''})
\end{equation}
and the equation \eqref{eq:com_end_pi_1} can be described graphically as \eqref{eq:graph_com_end} and the direction is horizontal from left to right.
\begin{equation}\label{eq:graph_com_end}
\begin{tikzcd}
	a_1\arrow[d,"V^{\pi_1}_{\alpha}"]\arrow[r,"\alpha_1"]&a_2\arrow[r,"\alpha_2"]\arrow[d,"V^{\pi_1}_{\alpha'}"]&a_3\arrow[d,"V^{\pi_1}_{\alpha''}"]\\
	a_1\arrow[r,"\alpha_1"]&a_2\arrow[r,"\alpha_2"]&a_3\\
\end{tikzcd}
\begin{tikzcd}
{}\arrow[rr,"\rm{direction}"]&&{}\\
\end{tikzcd}
\end{equation}

\end{example}

\begin{definition}
Let $R$ be a $\pi_1$ graded algebra over $k$, an $R$ module over $\pi$ is a collection $(Q_{\beta})_{\beta\in \pi}$ labeled by $\beta\in \pi$ with bilinear maps $R_{\alpha}\times Q_{\beta}\to Q_{\alpha{*_{\pi_1}}\beta},(x,q)\to x*_{\pi_1}q$ and satisfies the relation $(i) x(yq)=(xy)q$ and $1_a q=q$ for all $q\in Q_{\beta}$ with $s_{\pi}(\beta)=a$.

Here we make a little abuse of notations $*_{\pi_1}$ to denote both the module structure of the set $\pi$ and the graded module $Q$.
\end{definition}

\begin{example}
Let $V^{\pi_1}\in \text{Vect}_k(\pi_1)$ $V^{\pi_2}\in  \text{Vect}_k(\pi_2)$ and we use $\overline{\operatorname{Hom}}(V^{\pi_1},V^{\pi_2})$ denote the $\pi$ graded vector space, $\overline{\operatorname{Hom}}(V^{\pi_1},V^{\pi_2})_{\beta}=\oplus_{\alpha,\gamma}\operatorname{Hom}(V^{\pi_1}_{\alpha},V^{\pi_2}_{\gamma})$, with $\alpha\in \pi_1(s_{\pi}(\beta),s_{\pi}(\beta)),\gamma\in \pi_2(t_{\pi}(\beta),t_{\pi}(\beta))$, graphically it is described by
\begin{equation}\label{eq:graph_Hom}
\begin{tikzcd}
a_1\arrow[r,"\beta"]\arrow[d,"V^{\pi_1}_{\alpha}"']&c_1\arrow[d,"V^{\pi_2}_{\gamma}"]\\
a_1\arrow[r,"\beta"]&c_1\\
\end{tikzcd}
\end{equation}

The graded vector space $\overline{\operatorname{Hom}}(V^{\pi_1},V^{\pi_2})$ is an $\big(\overline{\operatorname{End}}(V^{\pi_1}),\overline{\operatorname{End}}(V^{\pi_2})\big)$ bimodule and the bimodule structure is easily described by the picture
\begin{equation}
\begin{tikzcd}
a_1\arrow[r,"\alpha_1"]\arrow[d,"V^{\pi_1}_{\alpha}"]&a_2\arrow[d,"V^{\pi_1}_{\alpha'}"]\\
a_1\arrow[r,"\alpha_1"]&a_2\\
\end{tikzcd}
\times
\begin{tikzcd}
a_2\arrow[r,"\beta"]\arrow[d,"V^{\pi_1}_{\alpha'}"]&c_1\arrow[d,"V^{\pi_2}_{\gamma}"]\\
a_2\arrow[r,"\beta"]&c_1\\
\end{tikzcd}
\mapsto
\begin{tikzcd}
a_1\arrow[r,"\alpha_1*_{\pi_1}\beta"]\arrow[d,"V^{\pi_1}_{\alpha}"']&c_1\arrow[d,"V^{\pi_2}_{\gamma}"]\\
a_1\arrow[r,"\alpha_1*_{\pi_1}\beta"']&c_1\\
\end{tikzcd}
\end{equation}
\begin{equation}
\begin{tikzcd}
	a_2\arrow[r,"\beta"]\arrow[d,"V^{\pi_1}_{\alpha'}"]&c_1\arrow[d,"V^{\pi_2}_{\gamma}"]\\
	a_2\arrow[r,"\beta"]&c_1
\end{tikzcd}
\begin{tikzcd}
c_1\arrow[d,"V^{\pi_2}_{\gamma}"]\arrow[r,"\gamma_1"]&c_2\arrow[d,"V^{\pi_2}_{\gamma'}"]\\
c_1\arrow[r,"\gamma_1"]&c_2
\end{tikzcd}
\mapsto
\begin{tikzcd}
a_2\arrow[r,"\beta*_{\pi_2}\gamma_1"]\arrow[d,"V^{\pi_1}_{\alpha}"']&c_2\arrow[d,"V^{\pi_2}_{\gamma'}"]\\
a_2\arrow[r,"\beta*_{\pi_2}\gamma_1"]&c_2
\end{tikzcd}
\end{equation}

More generally, it can be graded by two arrows of $\pi$, $\overline{\operatorname{Hom}}(V^{\pi_1},V^{\pi_2})_{\beta_1,\beta_2}=\oplus_{\alpha,\gamma}\operatorname{Hom}(V^{\pi_1}_{\alpha},V^{\pi_2}_{\gamma})$, with $\alpha\in \pi_1(s_{\pi}(\beta_2),s_{\pi}(\beta_1)),\gamma\in \pi_2(t_{\pi}(\beta_2),t(\pi)(\beta_1))$, and graphically it is
\begin{equation}
\begin{tikzcd}
	a\arrow[r,"\beta_2"]\arrow[d,"V^{\pi_1}_{\alpha}"']&c\arrow[d,"V^{\pi_2}_{\gamma}"]\\
	a'\arrow[r,"\beta_1"]&c'
\end{tikzcd}
\quad \rm{Or}\quad
\begin{tikzcd}
	c\arrow[r,"V^{\pi_2}_{\gamma}"]&c'\\
	a\arrow[r,"V^{\pi_1}_{\alpha}"]\arrow[u,"\beta_2"]&a'\arrow[u,"\beta_1"]
\end{tikzcd}
\end{equation}
\end{example}

And similarly we can define the space $\overline{\operatorname{Hom}}(V^{\pi_2},V^{\pi_1})$, it is graphically 
\begin{equation}
\begin{tikzcd}
	c\arrow[d,"V^{\pi_2}_{\gamma}"']&a\arrow[l,"\beta_1"']\arrow[d,"V^{\pi_1}_{\alpha}"]\\
	c'&a'\arrow[l,"\beta_2"]
\end{tikzcd}
\end{equation}
and also transfer operators of various tensor products of vector spaces.

\begin{definition}\cite{Felder2020}
	Let $R$ be a $\pi_1$ graded algebra, the convolution algebra $\Gamma(\pi_1,R)$ with coefficients in $R$ is the $k$ algebra of maps $f:\pi_1\to \sqcup_{\alpha\in \pi}R_{\alpha}$ such that
\begin{enumerate}
	\item $f(\alpha)\in R_{\alpha}$ for all arrows $\alpha\in \pi_1$,\\
	\item for every $a\in A$, there are finitely many $\alpha\in s^{-1}(a)\cup t^{-1}(a)$ such that $f(\alpha)\ne 0$.
\end{enumerate}

The product of the convolution algebra is the convolution product
\begin{equation}
g*f(\alpha)=\sum_{\alpha_2\circ\alpha_1=\alpha}f(\alpha_2)g(\alpha_1)
\end{equation}
\end{definition}

\begin{definition}
Let $Q$ be an $R$ module, the $\Gamma(\pi_1,R)$ module $\Gamma(\pi,Q)$ with coefficients in $Q$ is the $k$ algebra of maps $l:\pi\to \sqcup_{\beta\in \pi}Q_{\beta}$ such that
\begin{enumerate}
	\item $l(\beta)$ in $Q_{\beta}$ for all arrows $\beta\in \pi$.\\
	\item for every $a\in \operatorname{Ob}(\pi_1)$, there are finitely many $\beta\in s^{-1}_{\pi}(a)$ such that $l(\beta)\ne 0$.\\
\end{enumerate}

The $\Gamma(\pi_1,R)$ module structure of $\Gamma(\pi,Q)$ structure is the following,
\begin{equation}
f*_{\pi_1}l(\beta)=\sum_{\alpha'*_{\pi_1}\beta'=\beta}l(\beta')f(\alpha')
\end{equation}
\end{definition}

Similarly, we can define the bimodules of two convolution algebras.

\subsection{Intertwiner between Yang--Baxter operators}\label{sub:int_bet_YBE}
\begin{definition}
Suppose that $\check{R}_1(z)\in\operatorname{End}(V^{\pi_1}\otimes_{\pi_1} V^{\pi_1})$ and $\check{R}_2(z)\in \operatorname{End}(V^{\pi_2}\otimes_{\pi_2} V^{\pi_2})$ are two Yang--Baxter operators on the space of $V^{\pi_1}$ and $V^{\pi_2}$, a map $C\in \operatorname{Hom}(V^{\pi_1}\otimes_{*\pi_1}V^{\pi}, V^{\pi}\otimes_{*\pi_2} V^{\pi_2})$ is called an $(\check{R_1},\check{R_2})$ intertwiner if it satisfies the following RCC relation
\begin{equation}\label{eq:RCC}
C^{(12)}C^{(23)}\check{R}^{(12)}_1(z)=\check{R}^{(23)}_2(z)C^{(12)}C^{(23)}
\end{equation}
in the space $\operatorname{Hom}(V^{\pi_1}\otimes_{\pi_1} V^{\pi_1}\otimes_{*\pi_1}V^{\pi}, V^{\pi}\otimes_{*\pi_2}V^{\pi_2}\otimes_{\pi_2} V^{\pi_2})$.
\end{definition}

Now we consider the composition of intertwiners, suppose that we have another groupoid $\rm{Ob}(\pi_2)\stackrel{s_{\hat{\pi}}}\leftarrow \hat{\pi}\stackrel{t_{\hat{\pi}}}\rightarrow\rm{Ob}(\pi_3)$. Let $\widehat{C}\in \Hom(V^{\pi_2}\otimes_{*_{\pi_2}}V^{\hat{\pi}},V^{\hat{\pi}}\otimes_{*_{\pi_3}}V^{\pi_3})$ be an $(\check{R}_2,\check{R}_3)$ intertwiner,which satisfies the relation
\begin{equation}
\widehat{C}^{(12)}\widehat{C}^{(23)}\check{R}^{(12)}_2(z)=\check{R}^{(23)}_3(z)\widehat{C}^{(12)}\widehat{C}^{(23)}.
\end{equation}

The composition of two groupoids $\hat{\pi}\circ \pi$ connects the groupoid $\pi_1$ and $\pi_3$ and also the homomorphism $\widehat{C}^{23}C^{12}\in \Hom(V^{\pi_1}\otimes V^{\pi}\otimes V^{\hat{\pi}},V^{\pi}\otimes V^{\hat{\pi}}\otimes V^{\pi_3})$
\begin{equation}
\rm{Ob}(\pi_1)\stackrel{s_{\hat{\pi}\circ \pi}}\leftarrow \hat{\pi}\circ \pi\stackrel{t_{\hat{\pi}\circ \pi}}\rightarrow\rm{Ob}(\pi_3)
\end{equation}

\begin{prop}\label{prop:composition}
$\widehat{C}^{(23)}C^{(12)}$ is an $(\check{R}_1,\check{R}_3)$ intertwiner, it satisfies the equation
\begin{equation}
	\widehat{C}^{(23)}C^{(12)}\widehat{C}^{(34)}C^{(23)}\check{R}_1(z)^{(12)}=\check{R}_3(z)^{(34)}\widehat{C}^{(23)}C^{(12)}\widehat{C}^{(34)}C^{(23)}
\end{equation}
\end{prop}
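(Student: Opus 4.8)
The plan is to read the left-hand side from the right and transport the factor $\check{R}_1(z)^{(12)}$ leftward through the two intertwiners, converting it step by step into $\check{R}_3(z)^{(34)}$. The only ingredients are the two defining RCC relations (the hypothesis on $C$ and the hypothesis on $\widehat{C}$), together with the elementary observation that operators supported on disjoint tensor slots commute. Conceptually, the two connecting factors $V^{\pi}$ and $V^{\hat{\pi}}$ are playing jointly the role of the single connecting factor $V^{\hat{\pi}\circ\pi}$ of a genuine intertwiner, so the whole computation is the analogue of the one-step RCC relation carried out across the composite groupoid.

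First I would record the disjoint-support commutation $C^{(12)}\widehat{C}^{(34)}=\widehat{C}^{(34)}C^{(12)}$: since $C^{(12)}$ touches only slots $1,2$ and $\widehat{C}^{(34)}$ only slots $3,4$ of $V^{\pi_1}\otimes V^{\pi}\otimes V^{\pi_2}\otimes V^{\hat{\pi}}$, the two act on disjoint factors and hence commute. Using this to bring the two copies of $C$ together gives
\begin{equation*}
\widehat{C}^{(23)}C^{(12)}\widehat{C}^{(34)}C^{(23)}\check{R}_1(z)^{(12)}
=\widehat{C}^{(23)}\widehat{C}^{(34)}\,C^{(12)}C^{(23)}\check{R}_1(z)^{(12)}.
\end{equation*}
Next I would invoke the RCC relation for $C$, namely $C^{(12)}C^{(23)}\check{R}_1(z)^{(12)}=\check{R}_2(z)^{(23)}C^{(12)}C^{(23)}$, to replace $\check{R}_1$ by $\check{R}_2$ on slots $2,3$; then the RCC relation for $\widehat{C}$, shifted up by one index to act on slots $2,3,4$, namely $\widehat{C}^{(23)}\widehat{C}^{(34)}\check{R}_2(z)^{(23)}=\check{R}_3(z)^{(34)}\widehat{C}^{(23)}\widehat{C}^{(34)}$, to convert $\check{R}_2$ on slots $2,3$ into $\check{R}_3$ on slots $3,4$. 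This yields
\begin{equation*}
=\widehat{C}^{(23)}\widehat{C}^{(34)}\check{R}_2(z)^{(23)}C^{(12)}C^{(23)}
=\check{R}_3(z)^{(34)}\widehat{C}^{(23)}\widehat{C}^{(34)}C^{(12)}C^{(23)}.
\end{equation*}
Finally I would undo the first commutation, $\widehat{C}^{(34)}C^{(12)}=C^{(12)}\widehat{C}^{(34)}$, regrouping the operators back into $\widehat{C}^{(23)}C^{(12)}\widehat{C}^{(34)}C^{(23)}$ and producing the right-hand side $\check{R}_3(z)^{(34)}\widehat{C}^{(23)}C^{(12)}\widehat{C}^{(34)}C^{(23)}$.

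The substantive points, which I expect to be the main obstacle, are bookkeeping in the groupoid-graded category rather than algebraic manipulation. I must check that each composite in the chain type-checks, i.e.\ that at every stage the source/target gradings of adjacent factors match so that $C$, $\widehat{C}$ and the $\check{R}_i$ are actually defined on the space to which they are applied, following the slots $V^{\pi_1}\otimes V^{\pi_1}\otimes V^{\pi}\otimes V^{\hat{\pi}}\to V^{\pi}\otimes V^{\hat{\pi}}\otimes V^{\pi_3}\otimes V^{\pi_3}$ (in particular, that $\widehat{C}^{(34)}$ is fed the $V^{\pi_2}$ produced by $C^{(23)}$). I must also justify that the index-shifted form of the $\widehat{C}$-relation is legitimate; this follows because the defining RCC relation is homogeneous in the tensor-position labels and the connecting groupoid of the composite is $\hat{\pi}\circ\pi$, so that $V^{\pi}\otimes V^{\hat{\pi}}$ carries the grading over $\hat{\pi}\circ\pi$. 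Once these identifications are in place the three substitutions above are immediate, and the displayed equation is established.
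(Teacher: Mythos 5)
Your proof is correct: the disjoint-slot commutation $C^{(12)}\widehat{C}^{(34)}=\widehat{C}^{(34)}C^{(12)}$ on $V^{\pi_1}\otimes V^{\pi}\otimes V^{\pi_2}\otimes V^{\hat{\pi}}$, followed by the RCC relation for $C$ on slots $1,2,3$ and the index-shifted RCC relation for $\widehat{C}$ on slots $2,3,4$, is exactly the intended argument, and your tracking of the intermediate graded spaces (in particular that $\widehat{C}^{(34)}$ receives the $V^{\pi_2}$ output of $C^{(23)}$) is accurate. The paper states this proposition without proof, so your computation supplies the verification in the natural way.
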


From the intertwiner, we can construct some modules of convolution algebra, we first discuss some inversion relations.  The map $C\in \operatorname{Hom}(V^{\pi_1}\otimes_{*\pi_1}V^{\pi}, V^{\pi}\otimes_{*\pi_2} V^{\pi_2})$ is called \emph{left invertible} if there exists a map $C_l^{-1}\in \operatorname{Hom}(V^{\pi}\otimes_{*_{\pi_2}} V^{\pi_2},V^{\pi_1}\otimes_{*\pi_1}V^{\pi})$ such that
\begin{equation}\label{eq:left_invert}
C_l^{-1}C=\operatorname{id}_{V^{\pi_1}\otimes V^{\pi}}\in \operatorname{Hom}(V^{\pi_1}\otimes_{*\pi_1}V^{\pi},V^{\pi_1}\otimes_{*\pi_1}V^{\pi})
\end{equation}
and graphically it is presented as \eqref{eq:gra_left_invertible} and notice the convention for the order of composition of the block,
\begin{equation}\label{eq:gra_left_invertible}
\sum_{\beta,\gamma,c_1}
\begin{tikzcd}
a_1\arrow[r,"V^{\pi}_{\beta}"]\arrow[rd,phantom,"C"]\arrow[d,"V^{\pi_1}_{\alpha_1}"']&c_1\arrow[d,"V^{\pi_2}_{\gamma}"]\\
a_2\arrow[r,"V^{\pi}_{\beta_1}"']&c_2
\end{tikzcd}
\circ
\begin{tikzcd}
a_1\arrow[r,"V^{\pi_1}_{\alpha_2}"]\arrow[rd,phantom,"C_l^{-1}"]\arrow[d,"V^{\pi}_{\beta}"']&a'_2\arrow[d,"V^{\pi}_{\beta_2}"]\\
c_1\arrow[r,"V^{\pi_2}_{\gamma}"']&c_2
\end{tikzcd}
=\delta_{\alpha_1,\alpha_2}\delta_{\beta_1,\beta_2}
\begin{tikzcd}
a_1\arrow[r,"V^{\pi}_{\alpha_1}"]\arrow[rd,phantom,"C_l^{-1}C"]\arrow[d,"V^{\pi_1}_{\alpha_1}"']&a_2\arrow[d,"V^{\pi}_{\beta_1}"]\\
a_2\arrow[r,"V^{\pi}_{\beta_1}"']&c_2
\end{tikzcd}
\end{equation}

And $C$ is called \emph{right invertible} if there exists $C^{-1}_r\in \Hom(V^{\pi}\otimes_{*\pi_2}V^{\pi_2},V^{\pi_1}\otimes_{*\pi})$ that satisfies the equation
\begin{equation}\label{eq:right_invert}
CC_r^{-1}=\operatorname{id}_{V^{\pi}\otimes V^{\pi_2}}\in \operatorname{Hom}(V^{\pi}\otimes_{*\pi_2} V^{\pi_2},V^{\pi}\otimes_{*\pi_2} V^{\pi_2})
\end{equation}
and graphically the equation \eqref{eq:right_invert} is presented as
\begin{equation}
\sum_{\alpha,\beta,a_2}
\begin{tikzcd}
a_1\arrow[d,"V^{\pi}_{\beta_1}"']\arrow[rd,phantom,"C_r^{-1}"]\arrow[r,"V^{\pi_1}_{\alpha}"]&a_2\arrow[d,"V^{\pi}_{\beta}"]\\
c_1\arrow[r,"V^{\pi_2}_{\gamma_1}"']&c_2\\	
\end{tikzcd}
\circ
\begin{tikzcd}
a_1\arrow[r,"V^{\pi}_{\beta_2}"]\arrow[rd,phantom,"C"]\arrow[d,"V^{\pi_1}_{\alpha}"']&c'_1\arrow[d,"V^{\pi_2}_{\gamma_2}"]\\
a_2\arrow[r,"V^{\pi}_{\beta}"']&c_2\\
\end{tikzcd}
=\delta_{\beta_1,\beta_2}\delta_{\gamma_1,\gamma_2}
\begin{tikzcd}
a_1\arrow[r,"V^{\pi}_{\beta_1}"]\arrow[rd,phantom,"CC^{-1}_r"]\arrow[d,"V^{\pi}_{\beta_1}"']&c_1\arrow[d,"V^{\pi}_{\gamma_1}"]\\
c_1\arrow[r,"V^{\pi}_{\gamma_1}"']&c_2
\end{tikzcd}
\end{equation}

And $C$ is called \emph{invertible} if it is both left  \eqref{eq:left_invert}  and right invertible \eqref{eq:right_invert} and also
\begin{equation}\label{eq:C_invertible}
C^{-1}:=C^{-1}_l=C^{-1}_r
\end{equation}

Now suppose that we have an $(\check{R}_1(z),V^{\pi_1}),(\check{R}_2(z),V^{\pi_2})$ intertwiner $C$, then its partial trace $\overline{\operatorname{tr}}_{V^{\pi}}C\in \Gamma(\pi,\overline{\operatorname{Hom}}(V^{\pi_1},V^{\pi_2}))$ is defined as follows.
\begin{definition}\label{def_trace}
For each $\beta\in \pi(a,b),\gamma\in \pi_2(b,b),\alpha\in \pi_1(a,a)$, the homogeneous component of $C$ is the linear map
\[
C(\beta,\alpha,\gamma): V^{\pi_1}_{\alpha}\otimes V^{\pi}_{\beta}\to V^{\pi}_{\beta}\otimes V^{\pi_2}_{\gamma},
\]
define $\overline{\tr}_{V^{\pi}}C(\beta,\alpha,\gamma)=\sum_i (e^{*}_{i,\beta}\otimes \id)C(\beta,\alpha,\gamma)(\operatorname{id}\otimes e_{i,\beta})\in \operatorname{Hom}(V^{\pi_1}_{\alpha},V^{\pi_2}_{\gamma})$, for the chosen basis $e_{i,\beta}$ of $V^{\pi}_{\beta}$, the partial trace is defined by 
\begin{equation}
\overline{\tr}_{V^{\pi}}C:\beta\to \oplus_{\gamma\in \pi_2(b,b),\alpha\in \pi_1(a,a)}\operatorname{tr}_{V^{\pi}}C(\beta,\alpha,\gamma)
\end{equation}
\end{definition}

\begin{example}
If we set $\pi=\pi_2=\pi_1$ and $\check{R}_1\in \End(V^{\pi_1})$, then we have
\begin{equation}
\overline{\tr}_{V^{\pi_1}}\check{R}_1\in \Gamma(\pi_1,\overline{\End}(V^{\pi_1}))
\end{equation}
and similarly if we take the trace over the first coordinate, then we will have
\begin{equation}
\underline{\tr}_{V^{\pi_1}}\check{R}_1\in \Gamma(\pi_1,\underline{\End}(V^{\pi_1}))
\end{equation} 
\end{example}

\begin{prop}\label{prp:transfer_relation}
Suppose that $C$ is an $(\check{R_1},\check{R_2})$ intertwiner and is invertible, then we have the following relation \eqref{eq:RC_CR} in the space $\Gamma(\pi,\overline{\Hom}(V^{\pi_1},V^{\pi_2}))$
\begin{equation}\label{eq:RC_CR}
\overline{\tr}_{V^{\pi_1}}\check{R}_1*_{\pi_1}\overline{\tr}_{V^{\pi}}C=\overline{\tr}_{V^{\pi}}C*_{\pi_2}\overline{\tr}_{V^{\pi_2}}\check{R}_2
\end{equation}

\end{prop}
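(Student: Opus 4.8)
The plan is to derive \eqref{eq:RC_CR} by taking partial traces of the RCC relation \eqref{eq:RCC}, reading the left convolution $*_{\pi_1}$ off one side and the right convolution $*_{\pi_2}$ off the other. Since \eqref{eq:RC_CR} is an identity in $\Gamma(\pi,\overline{\Hom}(V^{\pi_1},V^{\pi_2}))$, I would fix $\beta\in\pi$ and compare homogeneous components, which by Definition \ref{def_trace} are linear maps $V^{\pi_1}_{\alpha}\to V^{\pi_2}_{\gamma}$ with $\alpha\in\pi_1(s_\pi(\beta),s_\pi(\beta))$ and $\gamma\in\pi_2(t_\pi(\beta),t_\pi(\beta))$. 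Unwinding the module-action formulas gives
\begin{equation*}
(\overline{\tr}_{V^{\pi_1}}\check{R}_1 *_{\pi_1}\overline{\tr}_{V^{\pi}}C)(\beta)=\sum_{\alpha_1 *_{\pi_1}\beta'=\beta}\overline{\tr}_{V^{\pi}}C(\beta')\,\overline{\tr}_{V^{\pi_1}}\check{R}_1(\alpha_1),
\end{equation*}
together with the mirror expression for $\overline{\tr}_{V^{\pi}}C *_{\pi_2}\overline{\tr}_{V^{\pi_2}}\check{R}_2$. Each summand is a composite of two partial traces sharing a common quantum strand, so I would first rewrite it as a single partial trace over the two distinct auxiliary factors simultaneously, of the two-fold composite of $C$ with $\check{R}_1$ (respectively $\check{R}_2$): on the left $\check{R}_1$ is applied while the quantum strand still has type $V^{\pi_1}$ and $C$ afterwards, on the right $C$ first and $\check{R}_2$ afterwards. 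This identifies each side of \eqref{eq:RC_CR} with a concrete partial trace of a product of one $C$ and one $\check{R}$.

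The bridge between the two sides is the RCC relation \eqref{eq:RCC} itself. I would take the partial trace of both sides over the auxiliary space $V^{\pi}$ and over one of the two quantum strands coupled by $\check{R}_1$ (on the left) and $\check{R}_2$ (on the right). The factor $\check{R}_1^{(12)}$ then contributes $\overline{\tr}_{V^{\pi_1}}\check{R}_1$ and the pair $C^{(12)}C^{(23)}$ contributes $\overline{\tr}_{V^{\pi}}C$, with one copy of $C$ surviving as the traced intertwiner while the second copy is cancelled against the closure of the type-changing loop; symmetrically $\check{R}_2^{(23)}$ contributes $\overline{\tr}_{V^{\pi_2}}\check{R}_2$ and the same pair of intertwiners again contributes $\overline{\tr}_{V^{\pi}}C$. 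The point that makes this cancellation necessary, and forces the hypothesis that $C$ is invertible, is that $C$ sends $V^{\pi_1}$ to $V^{\pi_2}$, so no leg of \eqref{eq:RCC} has matching source and target type; a copy of $C^{-1}$ is exactly what converts the output type back so that the auxiliary loop can be closed. Once this substitution is made the extra copies of $C$ and $C^{-1}$ sit at the two ends of the closed line, and I would annihilate them using $C^{-1}C=\id$ and $CC^{-1}=\id$ from \eqref{eq:left_invert} and \eqref{eq:right_invert} together with cyclicity of the ordinary trace along that line, leaving precisely the traced product of one $C$ and one $\check{R}$ of the previous paragraph.

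The step I expect to be the main obstacle is the groupoid bookkeeping in this cyclicity-and-cancellation argument. The partial trace of Definition \ref{def_trace} contracts only components whose auxiliary $\pi$-arrow agrees on the source and target sides, so ordinary cyclicity of trace is available only on each such diagonal summand; I must check that conjugation by $C^{(12)}C^{(23)}$ preserves these summands and that the Kronecker deltas $\delta_{\alpha_1,\alpha_2}\delta_{\beta_1,\beta_2}$ and $\delta_{\beta_1,\beta_2}\delta_{\gamma_1,\gamma_2}$ of \eqref{eq:gra_left_invertible} and \eqref{eq:right_invert} align the source and target fibers so that the cancellations do not disturb the $\pi_1$- and $\pi_2$-gradings $\alpha_1$ and $\gamma_1$ indexing the two convolution sums. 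Finiteness of all the sums, hence well-definedness of the identity in $\Gamma(\pi,\overline{\Hom}(V^{\pi_1},V^{\pi_2}))$, follows from the finite-type hypothesis on the graded spaces together with the surjectivity of $s_\pi$ and $t_\pi$; reassembling the homogeneous components over all $\beta\in\pi$ then yields \eqref{eq:RC_CR}.
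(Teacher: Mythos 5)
Your proposal is correct and follows essentially the same route as the paper's own proof: both start from the RCC relation, cancel one copy of $C$ against $C^{-1}$ under the partial trace over $V^{\pi_1}\otimes V^{\pi}$ (respectively $V^{\pi}\otimes V^{\pi_2}$) using cyclicity, and then split the resulting double partial trace into homogeneous components graded by $\beta\in\pi$ to recover the two convolution products. The groupoid bookkeeping you flag as the main obstacle is exactly the step the paper leaves implicit, so your treatment is if anything slightly more careful.
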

\begin{proof}
From the $RCC$ equation \eqref{eq:RCC}, let $\phi=C$, we have that
\begin{equation*}
C^{(23)}\check{R}^{(12)}_1(z)=(\phi^{-1}\otimes \id)\check{R}^{(23)}_2(z)C^{(12)}(\id\otimes \phi)
\end{equation*}
then we have that
\begin{equation}\label{eq:proof_RCC}
\begin{split}
&\tr_{V^{\pi_1}\otimes V^{\pi}}C^{(23)}\check{R}^{(12)}_1(z)=\tr_{V^{\pi_1}\otimes V^{\pi}}(\phi^{-1}\otimes \id)\check{R}^{(23)}_2(z)C^{(12)}(\id\otimes \phi)\\
&=\tr_{V^{\pi}\otimes V^{\pi_2}}\check{R}^{(23)}_2(z)C^{(12)}\\
\end{split}
\end{equation}

Then for any $\sum_{\alpha',\beta'}\alpha'*_{\pi_1}\beta'=\beta=\sum_{\beta'',\gamma'} \beta''*_{\pi_2}\gamma'$, we take the component $\beta$ of \eqref{eq:proof_RCC} and decompose the trace $\tr_{V^{\pi}\otimes V^{\pi_1}}$ to $\tr_{V^{\pi}}$ and $\tr_{V^{\pi_1}}$, similarly for the $\tr_{V^{\pi_2}\otimes V^{\pi}}$. We get the relation 
\begin{equation}
\sum_{\alpha',\beta'}\overline{\tr}_{V^{\pi}_{\beta'}}C^{(23)}\overline{\tr}_{V^{\pi_1}_{\alpha'}}\check{R}^{(12)}_1(z)=\sum_{\beta'',\gamma'}\overline{\tr}_{V^{\pi_2}_{\gamma'}}\check{R}^{(23)}_2(z)\overline{\tr}_{V^{\pi}_{\beta''}}C^{(12)}.
\end{equation}
this is essentially the relation \eqref{eq:RC_CR}.	
\end{proof}

\subsection{Weight zero relation and compatibility}\label{sub:weight_zero_relation}
\hfill\\
From the RCC relation \eqref{eq:RCC} and assume that $C$ is right invertible (resp. left invertible and invertible), we multiply by the map $(C^{(12)}C^{(23)})_{r}^{-1}=(C^{(12)})^{-1}_r(C^{(23)})_r^{-1}$, we get the equation \eqref{eq:further_relation} in the space $\Hom(V^{\pi}\otimes_{*_{\pi_2}}V^{\pi_2}\otimes_{\pi_2} V^{\pi_2},V^{\pi}\otimes_{*_{\pi_2}}V^{\pi_2}\otimes_{\pi_2} V^{\pi_2})$.
\begin{equation}\label{eq:further_relation}
	C^{(12)}C^{(23)}\check{R}^{(12)}_1(z)\big(C^{(12)}C^{(23)}\big)_r^{-1}=\check{R}^{(23)}_2(z)
\end{equation}
and graphically it is the following \eqref{eq:graph_further_relation} which was observed by Roche as the equation $S'$ \cite{Roche1990}, 
\begin{equation}\label{eq:graph_further_relation}
\sum_{\color{red}\rm{red}}
\begin{tikzcd}
&\circ\arrow[ld,"V^{\pi_2}_{\gamma_1}"']&\bullet\arrow[l,"V^{\pi}_{\beta_1}"']\arrow[ld,red]\arrow[rd,red]\arrow[r,"V^{\pi}_{\beta_2}"]&\circ\arrow[rd,"V^{\pi_2}_{\gamma_3}"]&\\
\circ\arrow[rd,"V^{\pi_2}_{\gamma_2}"']&{\color{red}\bullet}\arrow[l,red]\arrow[rd,red]&\check{R}_1(z)&{\color{red}\bullet}\arrow[r,red]\arrow[ld,red]&\circ\arrow[ld,"V^{\pi_2}_{\gamma_4}"]\\
&\circ&{\color{red}\bullet}\arrow[l,red,"V^{\pi}_{\beta''}"]\arrow[r,red,"V^{\pi}_{\beta''}"']&\circ&
\end{tikzcd}
=\delta_{\beta_1,\beta_2}
\begin{tikzcd}
\bullet\arrow[r,"V^{\pi}_{\beta_1}"]&\circ\arrow[ld,"V^{\pi_2}_{\gamma_1}"']\arrow[rd,"V^{\pi_2}_{\gamma_3}"]&\bullet\arrow[l,"V^{\pi}_{\beta_1}"']\\
\circ\arrow[rd,"V^{\pi_2}_{\gamma_2}"']&\check{R}_2(z)&\circ\arrow[ld,"V^{\pi_2}_{\gamma_4}"]\\
&\circ&
\end{tikzcd}
\end{equation}

Then by taking the trace on $V^{\pi}_{\beta_1}$, for the $\beta_1$ that connect $\gamma_2\circ\gamma_1$, we get the identity
\begin{equation}\label{eq:tr_ice_rule}
\tr_{V^{\pi}}\big(	C^{(12)}C^{(23)}\check{R}^{(12)}_1(z)\big(C^{(12)}C^{(23)}\big)_{r}^{-1}\big)=\tr_{V^{\pi}}\big(\check{R}^{(23)}_2(z)\big),
\end{equation}
these kind of identities are graded by $\beta_1$ or two copies of $\beta_1$.

But $\check{R}_2(z)$ does not depend on $\beta_1$, the equality \eqref{eq:tr_ice_rule} will work for any $\beta\in t_{\pi}^{-1}(t_{\pi}(\beta_1)$, if we assume that $\dim V^{\pi}_{\beta}=n$ for all $\beta\in t_{\pi}^{-1}\big(t_{\pi}(\beta_1)\big)$, then we will have that the LHS of $\eqref{eq:tr_ice_rule}$ are all the same for all $\beta\in t_{\pi}^{-1}\big(t_{\pi}(\beta_1)\big)$. This is the compatibility conditions for the target fibers of $\pi$.

\begin{remark}
We call \eqref{eq:graph_further_relation} weight 0 or ice rule relation, because with the $\delta_{\beta_1,\beta_2}$, $\gamma_2\circ \gamma_1$ and $\gamma_4\circ\gamma_3$ have the same start point and end point.
\end{remark}

\subsection{Transposed intertwiner, groupoid structure and equivalence relation}\label{sub:transposed}

In the previous subsections, we have discussed about the set $\pi$ with the map  $\operatorname{Ob}(\pi_1)\stackrel{s_{\pi}}\leftarrow \pi\stackrel{t_{\pi}}\rightarrow\operatorname{Ob}(\pi_2)$, the set can be seen as arrows pointing from $\pi_1$ to $\pi_2$, now we consider the inverse direction.

Suppose that we have a set $\kappa$ with the map $\operatorname{Ob}(\pi_1)\stackrel{t_{\kappa}}\leftarrow \kappa\stackrel{s_{\kappa}}\rightarrow\operatorname{Ob}(\pi_2)$, with the source and target direction defined, we can view $\kappa$ as arrows from $\pi_2$ to $\pi_1$, we use the letters $\rho$ with subscripts to denote these kinds of arrows.
\begin{equation*}
\begin{tikzcd}
\rm{Ob}(\pi_1)&\arrow[l,"\kappa"]\rm{Ob}(\pi_2)
\end{tikzcd}
\end{equation*}

\begin{example}
For the set $\operatorname{Ob}(\pi_1)\stackrel{s_{\pi}}\leftarrow \pi\stackrel{t_{\pi}}\rightarrow\operatorname{Ob}(\pi_2)$, we can simply reverse the direction of arrows and then we get another set $\pi^T$, $(\pi,\pi^T)$ together give us a groupoid.
\end{example}

The fiber product and modules structures are defined analog to \eqref{eq:left_module},\eqref{eq:right_module} and \eqref{eq:bimodule}. For the modules of convolution algebra, we can now consider the "transpose" of previous example.

\begin{example}
For the vector space $\underline{\End}(V^{\pi_1})$ defined in example \ref{exa:lower_case_End}, we can use the following $T$ presentation of each component
\begin{equation}
\begin{tikzcd}
a_1\arrow[rd,phantom,"(T)"]&a_1\arrow[l,"V^{\pi_1}_{\alpha^{-1}}"']\\
a_2\arrow[u,"\alpha^{-1}_1"]&a_2\arrow[l,"V^{\pi_1}_{(\alpha')^{-1}}"]\arrow[u,"\alpha^{-1}_1"']
\end{tikzcd}
\quad
\begin{tikzcd}
{}\arrow[dd,"\rm{direction}"']\\
{}\\
{}
\end{tikzcd}
\end{equation}
and similarly for the space $\overline{\End}(V^{\pi_1})$, its component can be described by the following
\begin{equation}
\begin{tikzcd}
a_1\arrow[rd,phantom,"(T)"]&a_2\arrow[l,"\alpha^{-1}_1"']\\
a_1\arrow[u,"V^{\pi_1}_{\alpha^{-1}}"]&a_2\arrow[l,"\alpha^{-1}_1"]\arrow[u,"V^{\pi_1}_{(\alpha')^{-1}}"']
\end{tikzcd}
\begin{tikzcd}
{}&&{}\arrow[ll,"\rm{direction}"]
\end{tikzcd}
\end{equation}
\end{example}

\begin{example}
For $V^{\pi_1}\in \text{Vect}_k(\pi_1)$ and $V^{\pi_2}\in \text{Vect}_k(\pi_2)$, let $\overline{\Hom}^{T}(V^{\pi_2},V^{\pi_1})$ denote the $\kappa$ graded vector space with component $\overline{\Hom}^{T}(V^{\pi_2},V^{\pi_1})_{\rho}=\oplus_{\alpha,\gamma}$ with $\alpha\in \pi_1(t_{\kappa}(\rho),t_{\kappa}(\rho))$ and $\gamma\in \pi_2(s_{\kappa}(\rho),s_{\kappa}(\rho))$, graphically it is described by
\begin{equation}
\begin{tikzcd}
a_1\arrow[rd,phantom,"(T)"]&c_1\arrow[l,"\rho"']\\
a_1\arrow[u,"V^{\pi}_{\alpha}"]&\arrow[l,"\rho"]c_1\arrow[u,"V^{\pi_2}_{\gamma}"']
\end{tikzcd}
\begin{tikzcd}
{}&&{}\arrow[ll,"\rm{direction}"]
\end{tikzcd}
\end{equation}
\end{example}

\begin{example}
Let $\overline{\Hom}^{T}(V^{\pi_1},V^{\pi_2})$ denote the $\kappa$ graded vector space $\overline{\Hom}^{T}(V^{\pi_1},V^{\pi_2})_{\rho}=\oplus_{\alpha,\gamma}(V^{\pi_1}_{\alpha},V^{\pi_2}_{\gamma})$ with $\alpha\in \pi_1(t_{\kappa}(\rho),t_{\kappa}(\rho))$ and $\beta\in \pi_2(s_{\kappa}(\rho),s_{\kappa}(\rho))$, graphically it is described by
\begin{equation}
\begin{tikzcd}
c_1\arrow[r,"\rho"]\arrow[rd,phantom,"(T)"]&a_1\\
c_2\arrow[r,"\rho"']\arrow[u,"V^{\pi_2}_{\gamma}"]&a_1\arrow[u,"V^{\pi_1}_{\alpha}"']
\end{tikzcd}
\begin{tikzcd}
{}&&{}\arrow[ll,"\rm{direction}"]
\end{tikzcd}
\end{equation}
\end{example}

The homomorphism $D\in \Hom(V^{\pi_2}\otimes_{*_{\pi_2}} V^{\kappa},V^{\kappa}\otimes_{*_{\pi_1}} V^{\pi_1})$ is called \emph{left invertible} if there exists a map $D^{-1}_l\in \Hom(V^{\kappa}\otimes_{*_{\pi_1}}V^{\pi_1},V^{\pi_2}\otimes_{*_{\pi_2}} V^{\kappa})$ such that
\begin{equation}
D^{-1}_lD=\id_{V^{\pi_2}\otimes V^{\kappa}}
\end{equation} 
We can also define the \emph{right invertibility} and \emph{invertibility} similar to \eqref{eq:right_invert} and \eqref{eq:C_invertible} by just interchanging the role of $\pi_1$ and $\pi_2$.

\begin{example}\label{exa:tranposed_C}
For a homomorphism $C\in \Hom(V^{\pi_1}\otimes_{*\pi_1}V^{\pi},V^{\pi}\otimes_{*_{\pi_2}}V^{\pi_2})$, let each component of $V^{\pi}$, $V^{\pi_2}$, $V^{\pi_1}$ be one dimension. We further assume that $V^{\pi_1}_{\alpha}\simeq V^{\pi_1}_{\alpha^{-1}},V^{\pi_2}_{\gamma}\simeq V^{\pi_2}_{\gamma^{-1}}$ and set $V^{\pi^T}_{\rho}=V^{\pi}_{\rho^{-1}}$, we can define the homomorphism $C^{T}\in \Hom(V^{\pi_2}\otimes_{*_{\pi_2}}V^{\pi^T},V^{\pi^T}\otimes_{*_{\pi_1}}V^{\pi_1})$ as in \eqref{eq:def_transposed} by specifying the coefficients of the homomorphism with respect to these one dimension basis.
\begin{equation}\label{eq:def_transposed}
\text{coefficent of}~
\begin{tikzcd}
a_1\arrow[rd,phantom,"(T)"]&c_1\arrow[l,"V^{\pi^T}_{\rho_1}"']\\
a_2\arrow[u,"V^{\pi_1}_{\alpha^{-1}}"]&c_2\arrow[l,"V^{\pi^T}_{\rho_2}"]\arrow[u,"V^{\pi_2}_{\gamma^{-1}}"']\\
{}&{}\arrow[l,"\text{direction}"']
\end{tikzcd}
:=
\text{coefficent of}~
\begin{tikzcd}
a_1\arrow[r,"V^{\pi}_{\rho_2^{-1}}"]\arrow[rd,phantom,"C"]\arrow[d,"V^{\pi}_{\alpha}"']&c_1\arrow[d,"V^{\pi_2}_{\gamma}"]\\
a_2\arrow[r,"V^{\pi}_{\rho^{-1}_1}"']&c_2\\
{}\arrow[r,"\text{direction}"]&{}
\end{tikzcd}
\end{equation}

\begin{lemma}\label{lem:inversion}
We have the following relation 
\begin{equation}\label{eq:RTCTCT}
	\check{R}^T_1(z)^{(23)}\big(C^T\big)^{(12)}\big(C^T\big)^{(23)}=\big(C^T\big)^{(12)}\big(C^T\big)^{(23)}\check{R}^T_2(z)^{(12)}	
\end{equation}
\end{lemma}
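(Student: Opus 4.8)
The plan is to read off Lemma~\ref{lem:inversion} as a formal transpose of the RCC relation \eqref{eq:RCC}, where the transpose $C^T$ is defined componentwise by \eqref{eq:def_transposed} and $\check{R}^T_i(z)$ denotes the transposed Yang--Baxter operator (taking inverse arrows, with the identifications $V^{\pi_i}_\alpha\simeq V^{\pi_i}_{\alpha^{-1}}$). Since every component is assumed one-dimensional, both sides of \eqref{eq:RTCTCT} are determined by their scalar coefficients in the fixed one-dimensional bases, so the entire proof reduces to a coefficient-matching computation. First I would fix boundary data for the three-layer diagram underlying \eqref{eq:RTCTCT}: the outer arrows $\rho_i\in\pi^T$, the $\pi_1$-arrows $\alpha^{-1}$ and the $\pi_2$-arrows $\gamma^{-1}$ bounding the relevant squares, and I would write each side as a sum over the internal arrows (the intermediate $\pi^T$-labels and the intermediate $\pi_1$- or $\pi_2$-labels) of a product of three scalar coefficients.

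The key computational step is to substitute the definition \eqref{eq:def_transposed} into each scalar factor. Concretely, the coefficient of a $C^T$-square equals the coefficient of the corresponding $C$-square with all arrows reversed (inner $\pi$-arrow $\rho^{-1}$, and the $\pi_1,\pi_2$ labels replaced by their inverses), and likewise the coefficient of an $\check{R}^T_i$-square equals the coefficient of the reversed $\check{R}_i$-square under $V^{\pi_i}_\alpha\simeq V^{\pi_i}_{\alpha^{-1}}$. After this substitution, the left-hand side of \eqref{eq:RTCTCT} becomes a sum, over the same internal arrows now read backwards, of a product $\check{R}_1$-coefficient times two $C$-coefficients; comparing with the right-hand side, one obtains exactly the coefficient form of \eqref{eq:RCC} but with the diagram reflected and every arrow inverted. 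Thus I would exhibit an explicit bijection between the summation indices on the two sides of \eqref{eq:RTCTCT} and the summation indices appearing in the original RCC relation \eqref{eq:RCC} (sending each internal arrow to its inverse), and check that under this bijection the three scalar factors match termwise.

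The bookkeeping obstacle—and the step I expect to be most delicate—is the placement of the strands and the ordering of composition under transposition: reversing arrows swaps sources with targets and interchanges the roles of the $(12)$ and $(23)$ tensor slots, so I must verify that the reflected diagram for $\check{R}^T_1{}^{(23)}(C^T)^{(12)}(C^T)^{(23)}$ is genuinely the reversed picture of $C^{(12)}C^{(23)}\check{R}_1^{(12)}$ and that no spurious sign or reindexing of the convolution sum is introduced. In particular, I need the intertwiner composition and the Yang--Baxter layer to land in the slots prescribed by the ambient spaces $\Hom(V^{\pi_2}\otimes_{*_{\pi_2}}V^{\pi^T}\otimes_{\pi_1}V^{\pi_1},\dots)$, which is precisely the transpose of the ambient space for \eqref{eq:RCC}. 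Once the strand positions are matched, the equality \eqref{eq:RTCTCT} is just \eqref{eq:RCC} read off in the transposed bases, so I would conclude by invoking \eqref{eq:RCC} componentwise and remarking that the one-dimensionality assumption lets us pass from equality of all coefficients back to the operator identity \eqref{eq:RTCTCT}.
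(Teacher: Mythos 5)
Your proposal is correct and follows essentially the same route as the paper, whose entire proof of Lemma \ref{lem:inversion} is the one-line instruction to reverse the direction of the RCC relation \eqref{eq:RCC}. Your componentwise coefficient-matching via \eqref{eq:def_transposed}, together with the bijection of summation indices sending each internal arrow to its inverse, is simply a careful unpacking of that same arrow-reversal argument.
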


\begin{proof}
	Inverse the direction of the equation \ref{eq:RCC}.
\end{proof}

\begin{lemma}
Suppose that $C$ is left invertible with $C^{-1}_l$, then $C^{T}$ is right invertible with $\big(C^{T}\big)_{r}=(C^{-1}_l)^T$.
\begin{equation}
C^{T}(C^{-1}_l)^T=\id_{V^{\pi^T}\otimes_{*_{\pi_1}}V^{\pi_1}}
\end{equation}
and similar for the right invertibility and invertibility.
\end{lemma}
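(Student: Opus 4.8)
The plan is to deduce the right-invertibility of $C^T$ by transposing the defining left-invertibility relation $C^{-1}_l C = \id_{V^{\pi_1}\otimes V^{\pi}}$ of \eqref{eq:left_invert}, in the same spirit as Lemma \ref{lem:inversion}, where an identity was obtained simply by reversing the direction of \eqref{eq:RCC}.

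First I would record the coefficient form of the hypothesis. Since each component of $V^{\pi}$, $V^{\pi_1}$, $V^{\pi_2}$ is one-dimensional, the blocks $C$ and $C^{-1}_l$ are scalar-valued on each admissible square, and \eqref{eq:gra_left_invertible} becomes a scalar identity: summing, over the internal data $(\beta,\gamma,c_1)$, the product of the coefficient of $C$ on a square with left edge $V^{\pi_1}_{\alpha_1}$ and right edge $V^{\pi_2}_{\gamma}$ against the coefficient of $C^{-1}_l$ on the adjacent square yields $\delta_{\alpha_1,\alpha_2}\delta_{\beta_1,\beta_2}$.

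Next I would apply the transpose of Example \ref{exa:tranposed_C} to the whole equation. By \eqref{eq:def_transposed}, transposition reverses the direction of a homomorphism (interchanging domain and codomain), reverses the order of the two tensor factors, and sends every arrow to its inverse, so that $V^{\pi}$-gradings become $V^{\pi^T}$-gradings via $\rho\mapsto\rho^{-1}$ and the gradings $\alpha,\gamma$ pass to $\alpha^{-1},\gamma^{-1}$. A direct check of domains and codomains then shows that $C^T$ and $(C^{-1}_l)^T$ are composable, with $(C^{-1}_l)^T\colon V^{\pi^T}\otimes V^{\pi_1}\to V^{\pi_2}\otimes V^{\pi^T}$ followed by $C^T\colon V^{\pi_2}\otimes V^{\pi^T}\to V^{\pi^T}\otimes V^{\pi_1}$. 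The heart of the argument is that transposition is contravariant for this block composition: the summation over the shared internal edge is preserved, being merely relabelled by the inverse arrows, while the $180^{\circ}$ reversal of the stacked squares reverses their order. This is the one-dimensional graded incarnation of the matrix identity $(AB)^T=B^TA^T$, and it gives $(C^{-1}_l C)^T = C^T (C^{-1}_l)^T$.

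Finally, since the right-hand side of \eqref{eq:gra_left_invertible} is the diagonal block with coefficient $\delta_{\alpha_1,\alpha_2}\delta_{\beta_1,\beta_2}$, and the transpose of a diagonal block is again diagonal, we get $(\id_{V^{\pi_1}\otimes V^{\pi}})^T = \id_{V^{\pi^T}\otimes V^{\pi_1}}$. Combining the last two steps,
\begin{equation*}
C^T (C^{-1}_l)^T = (C^{-1}_l C)^T = (\id_{V^{\pi_1}\otimes V^{\pi}})^T = \id_{V^{\pi^T}\otimes_{*_{\pi_1}} V^{\pi_1}},
\end{equation*}
which is exactly the right-invertibility of $C^T$ with $(C^T)_r = (C^{-1}_l)^T$. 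The asserted "similar" statements follow by running the identical argument on the right-invertibility relation \eqref{eq:right_invert}: $C$ right invertible forces $C^T$ left invertible with left inverse $(C^{-1}_r)^T$, and if $C$ is invertible then, by \eqref{eq:C_invertible}, $(C^{-1})^T$ is simultaneously a left and right inverse of $C^T$, so $C^T$ is invertible with $(C^T)^{-1}=(C^{-1})^T$. The step demanding the most care is the contravariance of $(\cdot)^T$ under $\circ$ — matching the reversed arrows on the shared internal edge and confirming that the summation index set is genuinely preserved under $\rho\mapsto\rho^{-1}$ — since the one-dimensionality hypothesis reduces everything else to bookkeeping.
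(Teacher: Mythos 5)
Your proposal is correct and matches the paper's intent: the paper states this lemma without a written proof, leaving it as an immediate consequence of transposing the defining relation \eqref{eq:left_invert} in the same spirit as the preceding Lemma \ref{lem:inversion} (``inverse the direction''). Your argument --- recording the scalar coefficient form of $C^{-1}_lC=\id$, observing that $(\cdot)^T$ is contravariant for the block composition because the internal summation index set is merely relabelled by $\rho\mapsto\rho^{-1}$ while the scalar products are unchanged, and noting that the diagonal block transposes to the diagonal block --- is exactly the intended verification, just written out in full.
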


\end{example}

\begin{definition}
	Suppose that $\check{R}_1(z)\in\operatorname{End}(V^{\pi_1}\otimes_{\pi_1} V^{\pi_1})$ and $\check{R}_2(z)\in \operatorname{End}(V^{\pi_2}\otimes_{\pi_2} V^{\pi_2})$ are two Yang--Baxter operators on the space of $V^{\pi_1}$ and $V^{\pi_2}$, a homomorphism $D\in \Hom(V^{\pi_2}\otimes_{*_{\pi_2}} V^{\kappa},V^{\kappa}\otimes_{*_{\pi_1}} V^{\pi_1})$ is called an transposed $(\check{R_1},\check{R_2})$ intertwiner if it satisfies the following $RDD$ relation
	\begin{equation}\label{eq:RDD}
	\check{R}_1(z)^{(23)}D^{(12)}D^{(23)}=D^{(12)}D^{(23)}\check{R}_2(z)^{(12)}	
	\end{equation} 
in the space $\Hom(V^{\pi_2}\otimes V^{\pi_2}\otimes V^{\kappa},V^{\kappa}\otimes V^{\pi_1}\otimes V^{\pi_1})$.
\end{definition}

\begin{prop}\label{prp:transposed_transfer}
Suppose that $D$ is an $(\check{R}_1,\check{R}_2)$ intertwiner and is invertible, then we have the following relation in the space $\Gamma(\kappa,\overline{\Hom}(V^{\pi_2},V^{\pi_1}))$
\begin{equation}\label{eq:RD_DR}
\overline{\tr}_{V^{\kappa}}D*_{\pi_1}\overline{\tr}_{V^{\pi_1}}\check{R}_1(z)=\overline{tr}_{V^{\pi_2}}\check{R}_2(z)*_{\pi_2}\overline{\tr}_{V^{\kappa}}D
\end{equation}
\end{prop}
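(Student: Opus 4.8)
The plan is to follow the proof of Proposition~\ref{prp:transfer_relation} almost verbatim, with the roles of $\check{R}_1$ and $\check{R}_2$ interchanged: the $RDD$ relation \eqref{eq:RDD} carries $\check{R}_1$ on the left and $\check{R}_2$ on the right, which is the exact mirror of the $RCC$ relation \eqref{eq:RCC}, and this is precisely what produces the swapped order of $\check{R}_1$ and $D$ on the two sides of \eqref{eq:RD_DR}. All operators live in the threefold tensor product whose slots carry the source types $V^{\pi_2},V^{\pi_2},V^{\kappa}$ and the target types $V^{\kappa},V^{\pi_1},V^{\pi_1}$.

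First I would use invertibility of $D$ to bring \eqref{eq:RDD} into a conjugated form suitable for tracing. Writing $\phi=D$ for the factor in positions $(23)$ and multiplying \eqref{eq:RDD} on the right by $(\id\otimes\phi)^{-1}=(D^{(23)})^{-1}$, I obtain
\[
\check{R}_1(z)^{(23)}D^{(12)}=D^{(12)}(\id\otimes\phi)\check{R}_2(z)^{(12)}(\id\otimes\phi)^{-1}.
\]
Taking the full trace of both sides, on the left over $V^{\kappa}\otimes V^{\pi_1}$ and using the graded cyclicity of the trace to absorb the conjugating factors $(\id\otimes\phi)^{\pm1}$ on the right (this also relabels the traced pair to $V^{\pi_2}\otimes V^{\kappa}$, because $D$ carries $V^{\pi_2}\otimes V^{\kappa}$ to $V^{\kappa}\otimes V^{\pi_1}$), I arrive at the analogue of \eqref{eq:proof_RCC},
\[
\tr_{V^{\kappa}\otimes V^{\pi_1}}\check{R}_1(z)^{(23)}D^{(12)}=\tr_{V^{\pi_2}\otimes V^{\kappa}}D^{(23)}\check{R}_2(z)^{(12)}.
\]

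Finally I would pass to a fixed component $\rho\in\kappa$ and split each full trace into partial traces, just as in Proposition~\ref{prp:transfer_relation}. On the left, decomposing $\tr_{V^{\kappa}\otimes V^{\pi_1}}$ into $\overline{\tr}_{V^{\kappa}}$ (applied to $D^{(12)}$) and $\overline{\tr}_{V^{\pi_1}}$ (applied to $\check{R}_1(z)^{(23)}$) and summing over the factorizations of $\rho$ through its target object in $\pi_1$ yields the convolution $\overline{\tr}_{V^{\kappa}}D*_{\pi_1}\overline{\tr}_{V^{\pi_1}}\check{R}_1(z)$; on the right, decomposing $\tr_{V^{\pi_2}\otimes V^{\kappa}}$ into $\overline{\tr}_{V^{\pi_2}}$ (applied to $\check{R}_2(z)^{(12)}$) and $\overline{\tr}_{V^{\kappa}}$ (applied to $D^{(23)}$) and summing over the factorizations of $\rho$ through its source object in $\pi_2$ gives $\overline{\tr}_{V^{\pi_2}}\check{R}_2(z)*_{\pi_2}\overline{\tr}_{V^{\kappa}}D$. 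This is precisely \eqref{eq:RD_DR}.

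The main obstacle is the bookkeeping of directions in the graded partial trace of Definition~\ref{def_trace}. Because $\kappa$ points from $\pi_2$ to $\pi_1$, the source side of an arrow $\rho$ lies in $\pi_2$ and its target side in $\pi_1$, so the two module actions that appear are $*_{\pi_2}$ on the source and $*_{\pi_1}$ on the target; one must check that the splitting of each full trace is compatible with this opposite-direction bimodule structure on $\Gamma(\kappa,\overline{\Hom}(V^{\pi_2},V^{\pi_1}))$ and that the cyclicity step is carried out in the arrow-matching sense rather than as a naive matrix trace. As a consistency check on the orientation of all the convolution products, the same identity can be recovered by transposing Proposition~\ref{prp:transfer_relation} through Lemma~\ref{lem:inversion} in the special case where $D$ arises as a transposed intertwiner $C^{T}$ and the $\check{R}_i$ are self-transposed, in which case \eqref{eq:RTCTCT} coincides with \eqref{eq:RDD}.
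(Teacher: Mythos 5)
Your proposal is correct and is exactly the mirror-image of the paper's proof of Proposition \ref{prp:transfer_relation} (the paper omits the proof of Proposition \ref{prp:transposed_transfer}, leaving precisely this adaptation to the reader): strip one factor of $D$ from the $RDD$ relation using invertibility, take the full trace with graded cyclicity, and split into partial traces, with the direction bookkeeping for $\kappa$ handled as you describe. Your closing consistency check against Lemma \ref{lem:inversion} and \eqref{eq:RTCTCT} also matches the remark the paper makes immediately after the proposition.
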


Suppose that $\check{R}^T_1(z)=\check{R}_1(z)$ and also $\check{R}^T_2(z)=\check{R}_2(z)$, then compare the relation \eqref{eq:RTCTCT} and \eqref{eq:RD_DR}, we know that $C^T$ is a transposed $(\check{R}_1,\check{R}_2)$ intertwiner. We call an Yang--Baxter operator symmetric if 
\begin{equation}\label{eq:symmetric_R}
\check{R}^T(z)=\check{R}(z)
\end{equation}
and from the inversion relation of $\check{R}$ matrix \eqref{eq:inversion_R}, we know that $\check{R}^T(z)=\check{R}^{-1}(-z)$

From the composition proposition \ref{prop:composition} and the inversion relation \ref{lem:inversion} lemma for symmetric $\check{R}$ matrix, then we have the following equivalence relation.

\begin{prop}[Symmetric equivalence]\label{prop:equivalence}
The existence of intertwiner is an equivenlce relation among symmetric $\check{R}$ matrices.
\end{prop}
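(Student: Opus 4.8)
The plan is to verify the three defining properties of an equivalence relation for the relation ``there exists an intertwiner'', now restricted to the class of symmetric Yang--Baxter operators \eqref{eq:symmetric_R}. Transitivity is already available from the composition result: if $C$ is an $(\check{R}_1,\check{R}_2)$ intertwiner and $\widehat{C}$ is an $(\check{R}_2,\check{R}_3)$ intertwiner, then Proposition \ref{prop:composition} shows the composite $\widehat{C}^{(23)}C^{(12)}$ is an $(\check{R}_1,\check{R}_3)$ intertwiner, so $\check{R}_1\sim\check{R}_3$. This step uses neither symmetry nor invertibility.

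For reflexivity I would take the connecting groupoid $\pi$ to be the trivial one, with $\operatorname{Ob}(\pi)=\operatorname{Ob}(\pi_1)$ and only identity arrows, so that $V^{\pi}$ is the monoidal unit of $\operatorname{Vect}_k(\pi_1)$, and let $C$ be the canonical identification $V^{\pi_1}\otimes_{*\pi_1}V^{\pi}\to V^{\pi}\otimes_{*\pi_1}V^{\pi_1}$. Threading the unit through both tensor slots, $C^{(12)}C^{(23)}$ acts as the identity on the two genuine $V^{\pi_1}$ factors, so the $RCC$ relation \eqref{eq:RCC} collapses to $\check{R}_1(z)=\check{R}_1(z)$, which holds trivially; hence $\check{R}_1\sim\check{R}_1$. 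Note that taking instead $\pi=\pi_1$ with $C=\operatorname{id}$ would fail, since then $C^{(12)}$ and $C^{(23)}$ act on disjoint pairs of factors and the relation would demand $\check{R}_1^{(12)}=\check{R}_1^{(23)}$; the passage through the unit object is what makes both operators act on the same pair.

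The substance of the argument is symmetry, and this is where I expect the main obstacle. Starting from an $(\check{R}_1,\check{R}_2)$ intertwiner $C$ satisfying \eqref{eq:RCC}, Lemma \ref{lem:inversion} produces, by reversing the direction of all arrows, the transposed operator $C^T$ obeying \eqref{eq:RTCTCT}, that is $\check{R}^T_1(z)^{(23)}(C^T)^{(12)}(C^T)^{(23)}=(C^T)^{(12)}(C^T)^{(23)}\check{R}^T_2(z)^{(12)}$. Invoking the symmetry hypothesis \eqref{eq:symmetric_R} for both operators, $\check{R}^T_1=\check{R}_1$ and $\check{R}^T_2=\check{R}_2$, this becomes exactly the $RDD$ relation \eqref{eq:RDD}, so $C^T$ is a transposed $(\check{R}_1,\check{R}_2)$ intertwiner on the reversed connecting groupoid $\pi^T$. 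Finally I would observe that, after transposing the equation, \eqref{eq:RDD} is \emph{verbatim} the $RCC$ relation for the swapped pair: a transposed $(\check{R}_1,\check{R}_2)$ intertwiner is precisely an $(\check{R}_2,\check{R}_1)$ intertwiner, with $\pi^T$ as its connecting groupoid. Therefore $\check{R}_2\sim\check{R}_1$.

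The delicate points are concentrated in this symmetry step. First, the transpose $C^T$ is only constructed in Example \ref{exa:tranposed_C} under the hypotheses that each graded component is one-dimensional and that $V^{\pi_1}_{\alpha}\simeq V^{\pi_1}_{\alpha^{-1}}$, $V^{\pi_2}_{\gamma}\simeq V^{\pi_2}_{\gamma^{-1}}$, with $V^{\pi^T}_{\rho}:=V^{\pi}_{\rho^{-1}}$; I must confirm that these are standing assumptions for symmetric operators (so that $\check{R}^T$ itself is well defined), or otherwise restrict the statement accordingly. Second, I must check that the source and target maps of $\pi^T$ supply the correct bimodule structure for an $(\check{R}_2,\check{R}_1)$ intertwiner, so that identifying the transposed-intertwiner relation \eqref{eq:RDD} with the reversed $RCC$ relation is a genuine and not merely formal coincidence of equations. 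Once these identifications are justified, reflexivity, symmetry, and transitivity together yield the asserted equivalence relation among symmetric $\check{R}$ matrices.
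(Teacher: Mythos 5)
Your argument matches the paper's: the proposition is derived there exactly from Proposition \ref{prop:composition} (transitivity) and Lemma \ref{lem:inversion} combined with the symmetry hypothesis $\check{R}^T_i=\check{R}_i$, which turns $C^T$ into a transposed, i.e.\ reverse-direction, intertwiner — the same route you take, including the caveat that $C^T$ is only constructed under the one-dimensionality assumptions of Example \ref{exa:tranposed_C}. The only point the paper handles differently is reflexivity, which it gets by taking $\pi=\pi_2=\pi_1$ and $C:=\check{R}_1$ so that the $RCC$ relation becomes the Yang--Baxter equation itself; your unit-object construction is an equally valid alternative.
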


\begin{remark}
We can also define the complex conjugate transpose similar to \eqref{eq:def_transposed}, then also define similar notion as \eqref{eq:symmetric_R}, which also forms th equivalence relation as in \eqref{prop:equivalence}. 
\end{remark}

And also from proposition \ref{prp:transfer_relation} and proposition \ref{prp:transposed_transfer}, we have

\begin{prop}[Transfer matrix equivalence]\label{prop:transfer_matrix_equavalence}
The existence of an invertible intertwiner is an equivalence relation between symmetric $\check{R}$ matrix and also the equation \eqref{eq:RC_CR} holds among equivalence classes.
\end{prop}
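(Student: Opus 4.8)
The plan is to assemble the equivalence relation from the structural pieces already established, namely Proposition \ref{prop:equivalence} (symmetric equivalence), Proposition \ref{prp:transfer_relation} (the transfer relation \eqref{eq:RC_CR} for an invertible intertwiner), and Proposition \ref{prp:transposed_transfer} (the transposed transfer relation \eqref{eq:RD_DR}). The claim has two halves: first, that existence of an \emph{invertible} intertwiner is an equivalence relation on symmetric $\check R$ matrices; second, that the relation \eqref{eq:RC_CR} is consistent along equivalence classes. I would treat these in turn.

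For the equivalence relation, I would verify reflexivity, symmetry, and transitivity. \emph{Reflexivity} follows by taking $C=\id$, which is trivially invertible and satisfies the $RCC$ relation \eqref{eq:RCC} when $\check R_1=\check R_2$. \emph{Transitivity} is handled by Proposition \ref{prop:composition}: given invertible intertwiners $C$ for $(\check R_1,\check R_2)$ and $\widehat C$ for $(\check R_2,\check R_3)$, the composite $\widehat C^{(23)}C^{(12)}$ is an $(\check R_1,\check R_3)$ intertwiner, and I would note that the composite is invertible because both factors are, with inverse built from $C^{-1}$ and $\widehat C^{-1}$ componentwise. \emph{Symmetry} is where the symmetric hypothesis \eqref{eq:symmetric_R} enters: given an invertible intertwiner $C$, form the transposed map $C^T$ as in Example \ref{exa:tranposed_C}. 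By Lemma \ref{lem:inversion} it satisfies \eqref{eq:RTCTCT}, which under $\check R_1^T=\check R_1$ and $\check R_2^T=\check R_2$ is exactly the $RDD$ relation \eqref{eq:RDD}, so $C^T$ is a transposed intertwiner; and by the lemma following Example \ref{exa:tranposed_C}, the invertibility of $C$ transfers to invertibility of $C^T$. This produces an invertible intertwiner running in the reverse direction, giving symmetry.

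For the second assertion, that \eqref{eq:RC_CR} holds among equivalence classes, I would invoke Proposition \ref{prp:transfer_relation} directly: any invertible intertwiner $C$ witnessing $\check R_1\sim\check R_2$ yields the transfer relation $\overline{\tr}_{V^{\pi_1}}\check R_1 *_{\pi_1}\overline{\tr}_{V^{\pi}}C=\overline{\tr}_{V^{\pi}}C *_{\pi_2}\overline{\tr}_{V^{\pi_2}}\check R_2$ in $\Gamma(\pi,\overline{\Hom}(V^{\pi_1},V^{\pi_2}))$. This exhibits the partial traces of the two Yang--Baxter operators as intertwined by $\overline{\tr}_{V^{\pi}}C$ in the convolution module sense, which is the precise meaning of \eqref{eq:RC_CR} holding along the class. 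I would also remark that the reverse direction is consistent via Proposition \ref{prp:transposed_transfer} applied to $C^T$, so the transfer relation is symmetric in the same way the intertwiner relation is.

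The main obstacle I anticipate is the bookkeeping in the symmetry step: one must check that the transposition of Example \ref{exa:tranposed_C} genuinely lands an invertible intertwiner for the \emph{pair} $(\check R_1,\check R_2)$ in the transposed (i.e. $\kappa$-graded) sense, and that the symmetric hypotheses \eqref{eq:symmetric_R} are exactly what convert the $RDD$ relation \eqref{eq:RDD} into a statement about ordinary intertwiners running from $\pi_2$ to $\pi_1$. The one-dimensionality assumptions on the components of $V^\pi$, $V^{\pi_1}$, $V^{\pi_2}$ in Example \ref{exa:tranposed_C}, together with the identifications $V^{\pi_1}_\alpha\simeq V^{\pi_1}_{\alpha^{-1}}$ and $V^{\pi_2}_\gamma\simeq V^{\pi_2}_{\gamma^{-1}}$, are what make $C^T$ well defined, so I would be careful to state that these standing assumptions are in force; everything else reduces to quoting the propositions and lemmas above and checking the invertibility of composites, which is routine.
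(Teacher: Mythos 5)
Your proposal is correct and follows essentially the same route as the paper: the paper states this proposition as a direct consequence of Proposition \ref{prop:equivalence} (itself built from the composition Proposition \ref{prop:composition} and the transposition Lemma \ref{lem:inversion}) together with Propositions \ref{prp:transfer_relation} and \ref{prp:transposed_transfer}, which is exactly the assembly you carry out. Your explicit checks of reflexivity, invertibility of composites, and the transfer of invertibility to $C^T$ are the details the paper leaves implicit, and they are handled correctly.
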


\subsection{Intertwiner with spectral parameters}\label{sub:intertwiner_spectral}
More generally, we can define the following intertwiner with spectral parameter.

\begin{definition}
	Suppose that $\check{R}_1(z)\in\operatorname{End}(V^{\pi_1}\otimes_{\pi_1} V^{\pi_1})$ and $\check{R}_2(z)\in \operatorname{End}(V^{\pi_2}\otimes_{\pi_2} V^{\pi_2})$ are two Yang--Baxter operators on the space of $V^{\pi_1}$ and $V^{\pi_2}$, a map $C(z)\in \operatorname{Hom}(V^{\pi_1}\otimes_{*\pi_1}V^{\pi}, V^{\pi}\otimes_{*\pi_2} V^{\pi_2})$ is called an $(\check{R_1},\check{R_2})$ intertwiner if it satisfies the following RCC relation
	\begin{equation}\label{eq:RCC_p}
		C(w)^{(12)}C(z)^{(23)}\check{R}_1(z-w)^{(12)}=\check{R}^{(23)}_2(z-w)C^{(12)}(z)C^{(23)}(w)
	\end{equation}
	in the space $\operatorname{Hom}(V^{\pi_1}\otimes_{\pi_1} V^{\pi_1}\otimes_{*\pi_1}V^{\pi}, V^{\pi}\otimes_{*\pi_2}V^{\pi_2}\otimes_{\pi_2} V^{\pi_2})$.
\end{definition}

\begin{prop}
	Suppose that $C(z)$ is an $(\check{R_1},\check{R_2})$ intertwiner and there exists $C^{-1}(z)$ satisfies the relation
\begin{subequations}
\begin{equation}
	C(z)C^{-1}(z)=\operatorname{id}_{V^{\pi}\otimes V^{\pi_2}}\in \operatorname{Hom}(V^{\pi}\otimes_{*\pi_2} V^{\pi_2},V^{\pi}\otimes_{*\pi_2} V^{\pi_2})
\end{equation}
\begin{equation}
C^{-1}(z)C(z)=\operatorname{id}_{V^{\pi_1}\otimes V^{\pi}}\in \operatorname{Hom}(V^{\pi_1}\otimes_{*\pi_1}V^{\pi},V^{\pi_1}\otimes_{*\pi_1}V^{\pi})
\end{equation}
\end{subequations}
then we have the following relation \eqref{eq:RC_CR} in the space $\Gamma(\pi,\overline{\Hom}(V^{\pi_1},V^{\pi_2}))$
	\begin{equation}\label{eq:s_RC_CR}
		\overline{\operatorname{tr}}_{V^{\pi_1}}\check{R}_1(z)*_{\pi_1}\overline{\operatorname{tr}}_{V^{\pi}}C(z')=\overline{\operatorname{tr}}_{V^{\pi}}C(z')*_{\pi_2}\overline{\operatorname{tr}}_{V^{\pi_2}}\check{R}_2(z)
	\end{equation}.
\end{prop}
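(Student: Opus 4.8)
The plan is to run the argument of Proposition~\ref{prp:transfer_relation} essentially verbatim, the only genuinely new ingredient being the bookkeeping of the two spectral parameters, so that after taking partial traces both Yang--Baxter operators carry the common argument $z$ while a single copy of $C$ survives carrying the free argument $z'$. The starting point is the spectral $RCC$ relation \eqref{eq:RCC_p}. Since the three factors on each side carry the arguments $w$, $z$ and $z-w$ rather than a single spectral value, I first reparametrize: I arrange the argument of the two $C^{(23)}$ factors to be the free parameter $z'$ and the argument of the two $\check{R}$'s to be $z$, which forces the argument of the two $C^{(12)}$ factors to be $z'-z$. Concretely, substituting $(z,w)\mapsto(z',\,z'-z)$ in \eqref{eq:RCC_p} gives
\begin{equation*}
C(z'-z)^{(12)}C(z')^{(23)}\check{R}_1(z)^{(12)}=\check{R}_2(z)^{(23)}C(z')^{(12)}C(z'-z)^{(23)}.
\end{equation*}

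Next, exactly as in \eqref{eq:proof_RCC}, I left-multiply by $C^{-1}(z'-z)^{(12)}$ to isolate the pair that will become the transfer operators, obtaining
\begin{equation*}
C(z')^{(23)}\check{R}_1(z)^{(12)}=C^{-1}(z'-z)^{(12)}\check{R}_2(z)^{(23)}C(z')^{(12)}C(z'-z)^{(23)}.
\end{equation*}
I then apply the full trace $\tr_{V^{\pi_1}\otimes V^{\pi}}$ to both sides. The left-hand side is already in the form whose $\beta$-graded components decompose, via Definition~\ref{def_trace} and the convolution structure, into $\overline{\tr}_{V^{\pi}}C(z')*_{\pi_1}\overline{\tr}_{V^{\pi_1}}\check{R}_1(z)$.

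The heart of the argument is the right-hand side. The crucial feature of the chosen reparametrization is that the leftover factors $C^{-1}(z'-z)^{(12)}$ and $C(z'-z)^{(23)}$ now share the single argument $z'-z$, so that the cyclicity-plus-invertibility manipulation used in Proposition~\ref{prp:transfer_relation} applies word for word: these two factors implement the change of fibre from the $\pi_1$-side to the $\pi_2$-side and, under the trace, convert $\tr_{V^{\pi_1}\otimes V^{\pi}}$ into $\tr_{V^{\pi}\otimes V^{\pi_2}}$ while leaving no residual factor, precisely because $C(z'-z)$ admits the two-sided inverse $C^{-1}(z'-z)$ on the relevant fibres. What remains is $\tr_{V^{\pi}\otimes V^{\pi_2}}\bigl[\check{R}_2(z)^{(23)}C(z')^{(12)}\bigr]$, whose $\beta$-graded components decompose into $\overline{\tr}_{V^{\pi}}C(z')*_{\pi_2}\overline{\tr}_{V^{\pi_2}}\check{R}_2(z)$. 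Equating the $\beta$-components of the two sides yields exactly \eqref{eq:s_RC_CR}.

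I expect the main obstacle to be precisely this cyclicity step, since in the groupoid-graded setting the full trace is really a transfer operation whose domain and codomain differ in their $\pi$-grading (one leg changes type from $V^{\pi}$ to $V^{\pi_2}$), so that \emph{cyclicity} must be read as invariance of the transfer operator under the isomorphism induced by the invertible $C(z'-z)$; checking that no spurious $\delta$-constraints or grading shifts are introduced is the one place where the spectral version could in principle differ from the non-spectral one. The reparametrization is designed exactly to neutralize this risk, by arranging that the conjugating $C$ and $C^{-1}$ carry equal spectral arguments so that the established non-spectral cancellation transports unchanged. The concluding decomposition of the full trace into the convolution of partial traces is then routine, following the last display in the proof of Proposition~\ref{prp:transfer_relation}.
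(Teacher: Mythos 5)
Your proposal is correct and follows the paper's own route: the paper proves the non-spectral case (Proposition \ref{prp:transfer_relation}) by left-multiplying the $RCC$ relation by the inverse of one $C$ factor, cancelling the resulting conjugating pair under the trace, and decomposing into graded partial traces, and it explicitly defers the spectral case to "being careful with the difference relation of the spectral parameters." Your reparametrization $(z,w)\mapsto(z',z'-z)$ is exactly the required bookkeeping: it makes the two conjugating factors share the argument $z'-z$ so the non-spectral cancellation transports verbatim, while the surviving $C$'s carry $z'$ and the $\check{R}$'s carry $z$ as in \eqref{eq:s_RC_CR}.
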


\begin{cor}[\cite{Felder2020}]
	Suppose that we have dynamical $\check{R}_1$ matrix on the vector space $V^{\pi_1}$, then set $\pi=\pi_2=\pi_1$ and also $V^{\pi}=V^{\pi_2}=V^{\pi_1}$, then we can take the intertwiner $C:=\check{R}_1$, the RCC relation \eqref{eq:RCC} will become the Yang-Baxter equation, then we get the commuting of the "transfer matrix",
	\begin{align}
		\overline{\tr}_{V^{\pi_1}}\check{R}(z)*\overline{\tr}_{V^{\pi_1}}\check{R}(z')=\overline{\tr}_{V^{\pi_1}}\check{R}(z')*\overline{\tr}_{V^{\pi_1}}\check{R}(z)
	\end{align}
\end{cor}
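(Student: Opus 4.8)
The plan is to recognize this corollary as the diagonal specialization of the spectral-parameter intertwiner proposition that immediately precedes it. Concretely, I would set $\pi=\pi_2=\pi_1$, identify $V^{\pi}=V^{\pi_2}=V^{\pi_1}$, and take $C:=\check{R}$. Under these identifications there are only two things to verify before invoking relation \eqref{eq:s_RC_CR}: first, that $C=\check{R}$ genuinely satisfies the $RCC$ relation \eqref{eq:RCC_p}; and second, that $C(z)$ is invertible in the sense demanded by that proposition. Once both are checked, the conclusion drops out by substitution.

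For the first point, I would substitute $C=\check{R}_1=\check{R}_2=\check{R}$ into \eqref{eq:RCC_p}, which yields
\begin{equation*}
\check{R}(w)^{(12)}\check{R}(z)^{(23)}\check{R}(z-w)^{(12)}=\check{R}(z-w)^{(23)}\check{R}(z)^{(12)}\check{R}(w)^{(23)}.
\end{equation*}
This is exactly the Yang--Baxter equation \eqref{eq:YBE} with its two sides interchanged, and hence holds by the standing assumption that $\check{R}$ is a Yang--Baxter operator. Thus $C=\check{R}$ is an $(\check{R},\check{R})$ intertwiner with spectral parameter, and this is precisely the sense in which ``the RCC relation becomes the Yang--Baxter equation.''

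For the invertibility, I would take $C^{-1}(z):=\check{R}(-z)$. The inversion relation \eqref{eq:inversion_R} gives $\check{R}(z)\check{R}(-z)=\id_{V\otimes V}$, and replacing $z$ by $-z$ gives $\check{R}(-z)\check{R}(z)=\id_{V\otimes V}$; after identifying all three groupoids these are exactly the two identities $C(z)C^{-1}(z)=\id_{V^{\pi}\otimes V^{\pi_2}}$ and $C^{-1}(z)C(z)=\id_{V^{\pi_1}\otimes V^{\pi}}$ required by the proposition. Hence its hypotheses are met.

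With both checks in place, relation \eqref{eq:s_RC_CR} specializes under $\check{R}_1=\check{R}_2=C=\check{R}$ to
\begin{equation*}
\overline{\tr}_{V^{\pi_1}}\check{R}(z)*\overline{\tr}_{V^{\pi_1}}\check{R}(z')=\overline{\tr}_{V^{\pi_1}}\check{R}(z')*\overline{\tr}_{V^{\pi_1}}\check{R}(z),
\end{equation*}
which is the asserted commutation of transfer matrices. There is no serious obstacle here: the entire content is the observation that the $RCC$ relation degenerates to the Yang--Baxter equation on the diagonal, and all the partial-trace and convolution-product machinery needed to pass from \eqref{eq:RCC_p} to \eqref{eq:s_RC_CR} has already been built in the preceding proposition. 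The only point requiring a moment's care is matching the two one-sided inverses to the single inversion relation \eqref{eq:inversion_R}, which is why I record the $z\mapsto-z$ substitution explicitly.
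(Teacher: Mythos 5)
Your proposal is correct and follows exactly the route the paper intends: the corollary is stated as an immediate specialization of the preceding spectral-parameter proposition, and you supply precisely the two checks that specialization requires (that $C=\check{R}$ turns \eqref{eq:RCC_p} into the Yang--Baxter equation \eqref{eq:YBE} with sides interchanged, and that the inversion relation \eqref{eq:inversion_R} together with $z\mapsto -z$ gives the two-sided invertibility hypothesis). Nothing further is needed.
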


The constructions in the previous sections are similar defined for the intertwiner with spectral parameters, just to be careful for the difference relation of the spectral parameters.

\subsection{Example: eight vertex-SOS intertwiner}\label{sub:8v_sos_example}
The eight vertex-SOS intertwiner was discovered by Baxter in \cite{Baxter1973} and we use the parametrizations in the section 2.3 of \cite{Date1987}.

Let $H(z)$ and $\Theta(z)$ denote the Jacobian elliptic theta funcations with the half periods $K$ and $iK'$, they depends on the elliptic nome $p=e^{-\pi K'/K}$ as well as the parameter $z$.
\begin{subequations}
\begin{equation}
H(z)=2\sum_{n=1}^{\infty}(-1)^{n-1}p^{(n-1/2)^2}\sin [(2n-1)\pi z/(2K)]
\end{equation}
\begin{equation}
\Theta(z)=1+2\sum_{n=1}^{\infty}(-1)^np^{n^2}\cos(nz\pi/K)
\end{equation}
\end{subequations}

We define the function $\phi(p)=\prod_{k=1}^{\infty}(1-p^k)$,
and let $h(z):=\zeta H(\lambda(z))\Theta(\lambda z)$ where $\zeta=p^{-1/8}\phi(p)/\phi(p^2)^2$, $\lambda$ is a free parameter.
	
Let $\pi_1$ denote the groupoid with a single vertex $v$ and 2 edges $+$ and $-$ and set $V^{\pi_1}$ be the groupoid graded vector space with each component one dimension, $V^{\pi_1}_{+}=\C v_{+}$ and $V^{\pi_1}_{-}=\C v_{-}$.
\begin{equation}\label{eq:vertex_sos_graph_pi_1}
\pi_1:\quad
\begin{tikzcd}
\bullet_{v}\arrow[loop right,"+"]\arrow[loop left,"-"]
\end{tikzcd}
\end{equation}

The source fiber of $V^{\pi_1}\otimes_{\pi_1} V^{\pi_1}$ is the space $\oplus_{\alpha}\in s^{-1}(a)(V^{\pi_1}\otimes V^{\pi_1})_{\alpha}$ and it is isomorphic to the four dimensional vector space $\C^{2}\otimes \C^2$ under the identification
\begin{equation}
v_{+}\to \begin{pmatrix}
1\\
0
\end{pmatrix},\quad v_{-}\to \begin{pmatrix}
0\\
1
\end{pmatrix}.
\end{equation}
In this case, the Yang-Baxter operator $\check{R}_1$ restricted to the source fiber can be written as a 4$\times$ 4 matrix. In other words the dynamical Yang--Baxter operator can be written as
\begin{equation}
\begin{split}
\check{R}^{8v}(z)=\frac{\Theta(\lambda)\Theta(\lambda z) H(\lambda(z+1))}{\Theta(0)H(\lambda)\Theta(\lambda)}
\begin{pmatrix}
1&&&\\
&\frac{H(\lambda)H(\lambda z)}{\Theta(\lambda)\Theta(\lambda z)}&\frac{H(\lambda z)\Theta\big(\lambda(z+1)\big)}{\Theta(\lambda z) H\big(\lambda(z+1)\big)}&\\
&\frac{H(\lambda z)\Theta\big(\lambda(z+1)\big)}{\Theta(\lambda z) H\big(\lambda(z+1)\big)}&\frac{H(\lambda u)\Theta\big(\lambda(z+1)\big)}{\Theta(\lambda z) H\big(\lambda(z+1)\big)}&\\
&&&1\\
\end{pmatrix}
\end{split}
\end{equation}

\begin{lemma}
	$\check{R}^{\text{8v}}$ is symmetric as defined in \eqref{eq:symmetric_R}.
\end{lemma}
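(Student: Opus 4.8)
The plan is to unwind the definition of the groupoid transpose on the elementary groupoid $\pi_1$ of \eqref{eq:vertex_sos_graph_pi_1} and to observe that, on the source fibre, it coincides with the ordinary transpose of the displayed $4\times 4$ matrix; symmetry is then read off by inspection. First I would recall from Example \ref{exa:tranposed_C} and the discussion around \eqref{eq:def_transposed} that the transpose $\check{R}^{T}$ reverses every arrow of the weight square \eqref{eq:square}, sending the homogeneous component $\mathcal{W}(z;\alpha,\beta,\gamma,\delta)\colon V_{\alpha}\otimes V_{\beta}\to V_{\gamma}\otimes V_{\delta}$ to the component attached to the square with arrows $\alpha^{-1},\beta^{-1},\gamma^{-1},\delta^{-1}$. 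Since $\pi_1$ has the single vertex $v$, every arrow is a loop and every graded piece $V^{\pi_1}_{\pm}$ is one dimensional, so the identifications $V^{\pi_1}_{\pm}\simeq V^{\pi_1}_{\pm^{-1}}$ required in Example \ref{exa:tranposed_C} hold automatically and carry no extra data.

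The single computation to carry out is the effect of arrow reversal on the source fibre of $V^{\pi_1}\otimes_{\pi_1}V^{\pi_1}$, identified with $\C^{2}\otimes\C^{2}$ in the ordered basis $v_{+}\otimes v_{+},\,v_{+}\otimes v_{-},\,v_{-}\otimes v_{+},\,v_{-}\otimes v_{-}$. Reversing the four arrows fixes every weight whose input and output configurations agree (the four diagonal entries) and interchanges the two remaining weights, namely the component $v_{+}\otimes v_{-}\to v_{-}\otimes v_{+}$ in position $(3,2)$ and the component $v_{-}\otimes v_{+}\to v_{+}\otimes v_{-}$ in position $(2,3)$. Hence $\big(\check{R}^{\text{8v}}\big)^{T}(z)$ is precisely the matrix transpose of $\check{R}^{\text{8v}}(z)$. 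Reading off the explicit matrix, the only nonzero off-diagonal entries sit in positions $(2,3)$ and $(3,2)$ and both equal $\tfrac{H(\lambda z)\Theta(\lambda(z+1))}{\Theta(\lambda z)H(\lambda(z+1))}$, while every other nonzero entry lies on the diagonal, and the overall scalar prefactor is untouched by transposition. Therefore the matrix coincides with its transpose and \eqref{eq:symmetric_R} holds.

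The only genuine content lies in the bookkeeping just described: verifying that reversing the four arrows of \eqref{eq:square}, under the identifications $V^{\pi_1}_{\pm}\simeq V^{\pi_1}_{\pm^{-1}}$, sends the weight at matrix position $(i,j)$ to the weight at position $(j,i)$. Once this is in place the conclusion is immediate from the manifest symmetry of the displayed matrix, the point being simply that the two off-diagonal Boltzmann weights of the zero-field eight vertex model coincide. As an alternative route one could instead invoke the identity $\check{R}^{T}(z)=\check{R}^{-1}(-z)$ recorded above together with the inversion relation \eqref{eq:inversion_R}, reducing the claim to $\check{R}^{\text{8v}}(z)\,\check{R}^{\text{8v}}(-z)=\id$; but this trades the elementary inspection for a theta-function verification and is less direct.
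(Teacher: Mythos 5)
The paper itself offers no written proof of this lemma; the closest it comes is the remark immediately following it, which stresses that the groupoid transpose of \ref{sub:transposed} is \emph{not} the same as matrix transposition and that the two happen to coincide for $\check{R}^{\text{8v}}$ ``simply because $\check{R}^{\text{8v}}_{11}=\check{R}^{\text{8v}}_{44}=1$.'' Your proposal follows the same overall strategy (identify the groupoid transpose with the matrix transpose on the source fibre, then inspect the matrix), but the one step that carries all the content — the bookkeeping of how arrow reversal acts on the matrix slots — is done incorrectly. In $\pi_1$ the arrow $-$ is the inverse of $+$ (this is forced by the matching with $\pi_2$, where $a\to a+1$ and $a+1\to a$ are mutually inverse, and by the requirement $V_{\alpha}\simeq V_{\alpha^{-1}}$ in Example \ref{exa:tranposed_C}). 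Consequently, reversing the four arrows of the square sends the all-$+$ configuration to the all-$-$ configuration: unwinding \eqref{eq:def_transposed} gives $(\check{R}^{T})^{(c,d)}_{(a,b)}=\check{R}^{(b^{-1},a^{-1})}_{(d^{-1},c^{-1})}$, and in the basis $v_+\otimes v_+,\,v_+\otimes v_-,\,v_-\otimes v_+,\,v_-\otimes v_-$ the substitution $(x,y)\mapsto(y^{-1},x^{-1})$ is the permutation interchanging $1$ and $4$ while fixing $2$ and $3$. So the groupoid transpose is the matrix transpose conjugated by the permutation $(1\,4)$: it swaps the $(1,1)$ and $(4,4)$ entries in addition to interchanging $(2,3)$ and $(3,2)$. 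Your claim that reversal ``fixes every weight whose input and output configurations agree (the four diagonal entries)'' is therefore false for the corner entries, and the assertion that $(\check{R}^{\text{8v}})^{T}$ ``is precisely the matrix transpose'' is obtained from a wrong computation. The conclusion survives only because $\check{R}^{\text{8v}}_{11}=\check{R}^{\text{8v}}_{44}$ — exactly the special feature the paper's remark singles out and which your argument never checks.

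Two smaller points. First, the fix is easy: carry the permutation $(1\,4)$ through, observe that the required identities are $\check{R}_{11}=\check{R}_{44}$ and $\check{R}_{23}=\check{R}_{32}$ together with preservation of the zero pattern, and verify all three by inspection of the displayed matrix. Second, the alternative route you sketch at the end is circular: the identity $\check{R}^{T}(z)=\check{R}^{-1}(-z)$ is stated in the paper as a consequence of symmetry combined with the inversion relation \eqref{eq:inversion_R}, so it cannot be used as the starting point for proving symmetry.
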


\begin{remark}
This is also called zero field condition by Baxter.
\end{remark}

\begin{remark}
	The transposed intertwiner defined in \ref{sub:transposed} is not the same as transposition of matrix. In the case of $\check{R}^{\text{8v}}$, they coincide simply because of $\check{R}^{\text{8v}}$ is special, in the sense that $\check{R}^{\text{8v}}_{11}=\check{R}^{\text{8v}}_{44}=1$. 
\end{remark}

Let $\pi_2$ be the groupoid that has the vertices $\Z+\xi,\xi\in \C$ and the arrows are $a\to a-1$ and $a\to a+1$ for any $a\in \Z+\xi,\xi\in \C$. And we choose $V^{\pi_2}$ to be the groupoid graded vector space with each component 1 dimension, that is $V^{\pi_2}_{a,a-1}=\C v_{a,-}$ and  $V^{\pi_2}_{a,a+1}=\C v_{a,+}$. And graphically it is simply \eqref{eq:vertex_sos_graph_pi_2}
\begin{equation}\label{eq:vertex_sos_graph_pi_2}
\pi_2:
\begin{tikzcd}[scale cd=0.8]
&\dots\arrow[r,bend right,"+"]&(3+\xi)\arrow[l, bend right,"-"']\arrow[r,bend right,"+"]&(4+\xi)\arrow[r,bend right,"+"]\arrow[l, bend right,"-"']&(5+\xi)\arrow[l,bend right,"-"']\arrow[r,bend right,"+"]&\dots\arrow[l,bend right,"-"']
\end{tikzcd}
\end{equation}

The source fibers of $V^{\pi_2}\otimes_{\pi_2} V^{\pi_2}$ is the space $\oplus_{\alpha\in s^{-1}(a)}(V^{\pi_2}\otimes V^{\pi_2})_{\alpha}$ and it is isomorphic to the four dimensional vector space $\C^{2}\otimes \C^2$ under the identification
\begin{equation}\label{eq:identi_source_fiber}
	v_{a,+}\to \begin{pmatrix}
		1\\
		0
	\end{pmatrix},\quad v_{a,-}\to \begin{pmatrix}
		0\\
		1
	\end{pmatrix}.
\end{equation}
In this case, the Yang-Baxter operator $\check{R}^{sos}(z)$ restricted to the source fiber can be written as a $4\times4$ matrix. In other words the dynamical Yang--Baxter operator $\check{R}^{sos}(z,a)$ can be written as
\begin{equation}
\check{R}^{\text{sos}}(z,a)=\frac{h(1)}{h(z+1)}
\begin{pmatrix}
1&&&\\
&\frac{h(a-z)h(1)}{h(a)h(z+1)}&\frac{h(a+1)h(z)}{h(a)h(z+1)}&\\
&\frac{h(a-1)h(z)}{h(a)h(z+1)}&\frac{h(a+z)h(1)}{h(a)h(z+1)}&\\
&&&1\\
\end{pmatrix}
\end{equation}

The connecting groupoid $\pi$ are the arrows from  the vertex $v$ of $\pi_1$ to the object $\Z+\xi$ of $\pi_2$, the vector space $V^{\pi}$ is 1 dimension for each component, $V^{\pi}_{\beta}=\C v_{\beta}$.
\begin{equation}
\begin{tikzcd}
\bullet_{v}\arrow[r,"\beta_{a+\xi}"']&(a+\xi)
\end{tikzcd}.
\end{equation}

We now assume that $\xi=\frac{s^{+}+s^{-}}{2}-\frac{K}{\lambda}$, as each component of the vector spaces are one dimensional, the intertwiner $C\in \Hom(V^{\pi_1}\otimes_{*_{\pi_1}}V^{\pi},V^{\pi}\otimes_{*_{\pi_2}}V^{\pi_2})$ can be described by the following coefficients with respect to the chosen basis elements.
\begin{equation}
\begin{tikzcd}
\bullet\arrow[d,"+"']\arrow[rd,phantom,"C"]\arrow[r]&a\arrow[d]\\
\bullet\arrow[r]&a+1
\end{tikzcd}
=H\big(\lambda(s^{+}+a-z-\xi)\big),\quad
\begin{tikzcd}
	\bullet\arrow[d,"-"']\arrow[rd,phantom,"C"]\arrow[r]&a\arrow[d]\\
	\bullet\arrow[r]&a+1
\end{tikzcd}
=\Theta\big(\lambda(s^{+}+a-z-\xi)\big)
\end{equation}

\begin{equation}
	\begin{tikzcd}
		\bullet\arrow[d,"+"']\arrow[rd,phantom,"C"]\arrow[r]&a\arrow[d]\\
		\bullet\arrow[r]&a-1
	\end{tikzcd}
	=H\big(\lambda(s^{-}+a+z-\xi)\big),\quad
	\begin{tikzcd}
		\bullet\arrow[d,"-"']\arrow[rd,phantom,"C"]\arrow[r]&a\arrow[d]\\
		\bullet\arrow[r]&a-1
	\end{tikzcd}
	=\Theta(\lambda(s^{-}+a+z-\xi))
\end{equation}

\begin{theorem}[Baxter \cite{Baxter1973,Date1987}]
	$C$ is an $(\check{R}^{8v},\check{R}^{sos})$ intertwiner.
\end{theorem}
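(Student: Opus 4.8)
The plan is to verify the defining intertwining relation \eqref{eq:RCC_p} directly and componentwise, with $\check{R}_1=\check{R}^{\text{8v}}$ and $\check{R}_2=\check{R}^{\text{sos}}$. Since every graded component of $V^{\pi_1}$, $V^{\pi}$ and $V^{\pi_2}$ is one-dimensional, each side of \eqref{eq:RCC_p}, restricted to a fixed graded component, is a single complex number built from the functions $H$ and $\Theta$. Thus the theorem reduces to a finite family of scalar identities among Jacobi theta functions. First I would fix the external data of a component: the two incoming $\pi_1$-spins $\epsilon_1,\epsilon_2\in\{+,-\}$, the incoming connecting arrow $\beta_{a+\xi}$, and the outgoing connecting arrow together with the two outgoing $\pi_2$-arrows. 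The weight-zero (ice-rule) constraint coming from the groupoid grading, namely the requirement $\beta\circ\alpha=\delta\circ\gamma$ in \eqref{eq:square}, forces height conservation and leaves only finitely many admissible configurations; it also matches the support of $\check{R}^{\text{8v}}$ and $\check{R}^{\text{sos}}$, whose only nonzero entries are the four corner $1$'s and the central $2\times 2$ block. On the left-hand side of \eqref{eq:RCC_p} the internal sum runs over the intermediate spin pair $(\epsilon_1',\epsilon_2')$ produced by $\check{R}^{\text{8v}}$; on the right-hand side it runs over the intermediate height $b$ shared by the two factors in the composition $C^{(12)}(z)C^{(23)}(w)$.

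Next I would enumerate the admissible boundary configurations. Up to the evident $\pm$ symmetry there are only a handful of cases, indexed by whether the incoming and outgoing spins are diagonal ($\epsilon_1=\epsilon_2$) or off-diagonal. For each case I substitute the explicit intertwiner coefficients given above, together with the explicit matrix entries of $\check{R}^{\text{8v}}$ and $\check{R}^{\text{sos}}$, clear the common normalization prefactors, and obtain a scalar equation of the schematic shape $\sum H(\cdots)\Theta(\cdots)=\sum H(\cdots)\Theta(\cdots)$, where all arguments are affine in $s^{+},s^{-},a,z,w,\xi$ and in shifts by $1$.

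I would then recognize each of these as an instance of the three-term (Riemann) addition formula for the theta functions $H,\Theta$, of the schematic type
\begin{equation*}
H(x+u)\,\Theta(x-u)\,\Theta(y+v)\,H(y-v)\;-\;(x\leftrightarrow y)\;=\;H(x-y)\,\Theta(x+y)\,\Theta(u+v)\,H(u-v),
\end{equation*}
combined with the quasi-periodicity relations $H(z+2K)=-H(z)$, $\Theta(z+2K)=\Theta(z)$ and the corresponding shift rules under $iK'$. The special value $\xi=\tfrac{s^{+}+s^{-}}{2}-K/\lambda$ is chosen precisely so that, after substitution, the arguments in every case line up into this addition formula; this is exactly what forces the $z$- and $w$-dependence to cancel in the diagonal entries and produces the matching of the off-diagonal ones.

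I expect the main obstacle to be the bookkeeping rather than any single hard identity: tracking the affine arguments through the composition $C^{(12)}(z)C^{(23)}(w)$ and through the implicit $h^{(1)}$-shift of the dynamical picture, and checking that the normalization prefactors of $\check{R}^{\text{8v}}$ and $\check{R}^{\text{sos}}$ combine consistently \emph{across all cases}, so that one single theta identity, not a case-dependent one, closes every equation. Verifying that the chosen $\xi$ makes this uniform is the crux; once the arguments are aligned, each identity follows at once from the Riemann relation and the quasi-periodicity of $H$ and $\Theta$.
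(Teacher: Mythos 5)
The paper offers no proof of this theorem: it is quoted from Baxter and from Date et al., and your overall strategy --- restrict \eqref{eq:RCC_p} to a graded component, enumerate the finitely many admissible boundary configurations, and close each resulting scalar identity with the Riemann three-term relation for $H,\Theta$ together with quasi-periodicity, the choice $\xi=\tfrac{s^{+}+s^{-}}{2}-K/\lambda$ being what aligns the arguments --- is exactly the computation carried out in those references. So the route is the right one, and your identification of the crux (uniformity of the theta identity across all cases for the chosen $\xi$) is accurate.

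There is, however, one concrete misstep in how you set up the case analysis. You assert that the weight-zero constraint from the groupoid grading ``matches the support of $\check{R}^{\text{8v}}$'' and you index the cases by whether spins are conserved. But $\pi_1$ in \eqref{eq:vertex_sos_graph_pi_1} has a single object with two loops, so on the vertex side the constraint $\beta\circ\alpha=\delta\circ\gamma$ is vacuous: all sixteen matrix positions are admissible graded components, and there is no ice rule there. More importantly, the genuine zero-field eight-vertex $\check{R}$-matrix carries the anti-diagonal $d$-weights, i.e.\ nonzero entries sending $v_{+}\otimes v_{+}$ to $v_{-}\otimes v_{-}$ and back, which violate spin conservation; this is precisely why Baxter needed an SOS reformulation rather than a six-vertex one, and these entries contribute to the internal sum over $(\epsilon_1',\epsilon_2')$ on the left of \eqref{eq:RCC_p}. (The $4\times 4$ matrix displayed in the paper leaves those corners blank, which appears to be a typographical slip; your reading of ``four corner $1$'s'' is in any case not what is printed, and without the $d$-weights the operator is not the eight-vertex solution of \eqref{eq:YBE}.) An enumeration of cases premised on spin conservation on the vertex side will therefore omit terms, and the theta identities will not close. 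Once the $d$-weight contributions are restored, the admissible configurations are cut down only by the height grading on the $\pi_2$ side, and the rest of your outline goes through as in Baxter's original argument.
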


We now introduce another intertwiner which relate $\check{R}^{\text{sos}}$ to a symmetric Yang-Baxter operator $\check{R}^{\text{sym-sos}}$.

Let $\pi_3=\pi_2$ and the connecting groupoid $\hat{\pi}$ from $\pi_2$ to $\pi_3$ are the identity arrows, in order to distinguish the 2 same groupoid, we use $\circ$ to denote the object in $\pi_3$ and $\bullet$ to denote the object in $\pi_2$.
\begin{equation}\label{eq:vertex_sos_hat_pi}
\begin{tikzcd}
\bullet_{a}\arrow[r,"\beta_a"]&\circ_a
\end{tikzcd}
,\forall a\in \Z+\xi
\end{equation}

Let $V^{\pi_3}=V^{\pi_2}$ and $V^{\hat{\pi}}_{\beta_a}=\C e_a$ for any $a$. By identifying the source fibers of $V^{\pi_3}$ as \eqref{eq:identi_source_fiber}. The $\check{R}^{\text{sym-sos}}$ can be written as the following.
\begin{equation}
	\check{R}^{\text{sym-sos}}(z,a)=\frac{h(1)}{h(z+1)}
	\begin{pmatrix}
		1&&&\\
		&\frac{h(a-z)h(1)}{h(a)h(z+1)}&\frac{\sqrt{h(a+1)h(a-1)}h(z)}{h(a)h(z+1)}&\\
		&\frac{\sqrt{h(a-1)h(a+1)}h(z)}{h(a)h(z+1)}&\frac{h(a+z)h(1)}{h(a)h(z+1)}&\\
		&&&1\\
	\end{pmatrix}
\end{equation}

\begin{lemma}
$\check{R}^{\text{sym-sos}}$ is symmetric as defined in \eqref{eq:symmetric_R}.
\end{lemma}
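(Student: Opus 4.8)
The plan is to verify the symmetry condition \eqref{eq:symmetric_R} for the $4\times 4$ block $\check{R}^{\text{sym-sos}}(z,a)$ by unwinding the groupoid transpose of subsection \ref{sub:transposed} face by face. Since every component of $V^{\pi_2}=V^{\pi_3}$ is one dimensional, the transpose is determined entirely by how \eqref{eq:def_transposed} relabels the four admissible squares \eqref{eq:square}: one reverses the two $\pi_2$-arrows $\pm$ of each face and reads off the weight of the reflected face. First I would list the four admissible faces—the two frozen corner faces carrying the scalar $1$, the two diagonal return faces $(+,-)\to(+,-)$ and $(-,+)\to(-,+)$, and the two off-diagonal flip faces $(+,-)\leftrightarrow(-,+)$—and record, for each, the reflected face produced by arrow reversal together with the corner that becomes the new dynamical base point.

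The essential point is that the groupoid transpose is not the naive matrix transpose: reversing arrows invokes the \emph{groupoid} inverse, so the weight based at $a$ is matched against a weight of $\check{R}^{\text{sos}}$ whose corner heights are interchanged and whose dynamical argument is shifted by one step along $\pi_2$. This shift is precisely the discrepancy flagged in the remark comparing the two transposes—they agree only when the frozen weights equal $1$, as for $\check{R}^{\text{8v}}$, because then no nontrivial corner factor survives the reflection; this is the role of $\check{R}^{\text{8v}}_{11}=\check{R}^{\text{8v}}_{44}=1$. For the flip faces, the reflected weight of $\check{R}^{\text{sos}}$ carries $h(a-1)$ where the original carried $h(a+1)$; the symmetrized operator instead builds the geometric mean $\sqrt{h(a+1)h(a-1)}$ into \emph{both} off-diagonal entries, and this combination is manifestly invariant under the interchange $a+1\leftrightarrow a-1$ induced by the reflection. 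Hence the flip entries are carried to one another identically, which is exactly the improvement over $\check{R}^{\text{sos}}$ (for which these two factors are distinct, so \eqref{eq:symmetric_R} fails).

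Having matched the off-diagonal block, I would check that the diagonal return weights $h(a-z)$ and $h(a+z)$ and the frozen $1$'s are fixed by the reflection, so that $(\check{R}^{\text{sym-sos}})^T(z,a)=\check{R}^{\text{sym-sos}}(z,a)$. The lemma then collapses to a single elementary fact about the theta quotient $h$, namely the symmetry of $\sqrt{h(a+1)h(a-1)}$ in its two arguments together with the cancellation of the corner factors against the square roots, the untouched factors $h(1),h(z),h(z+1),h(a)$ playing no role. I expect the main obstacle to be purely combinatorial bookkeeping: pinning down, through \eqref{eq:def_transposed} and the inversion relation \eqref{eq:inversion_R}, exactly which corner heights are swapped and which gauge factors appear under arrow reversal. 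Once that dynamical shift is correctly tracked—this being what genuinely distinguishes the groupoid transpose from an ordinary matrix transpose—the remaining analytic content is the triviality that a geometric mean is symmetric.
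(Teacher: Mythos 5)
Your argument is correct and is essentially the paper's own (implicit) justification: the paper states this lemma without a written proof, relying on the adjacent remark that the groupoid transpose of subsection \ref{sub:transposed} coincides with the ordinary matrix transpose here precisely because $\check{R}^{\text{sym-sos}}_{11}=\check{R}^{\text{sym-sos}}_{44}=1$, after which symmetry is visible from the geometric mean $\sqrt{h(a+1)h(a-1)}$ occupying both off-diagonal entries. Your face-by-face unwinding of \eqref{eq:def_transposed} makes exactly that reasoning explicit; the only slip is that the dynamical shift on the frozen faces is by two steps (the reversed $++$ face is based at $a+2$, so it is matched against the $--$ entry at $a\pm2$), not one, which is immaterial since those entries are constant equal to $1$.
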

\begin{remark}
The transposed intertwiner defined in \ref{sub:transposed} is not the same as transposition of matrix. In the case of $\check{R}^{\text{sym-sos}}$, they coincide simply because of $\check{R}^{\text{sym-sos}}$ is special, in the sense that $\check{R}^{\text{sym-sos}}_{11}=\check{R}^{\text{sym-sos}}_{44}=1$. 
\end{remark}

The homomorphism $\widehat{C}\in \Hom(V^{\pi_2}\otimes_{*_{\pi_2}}V^{\tilde{\pi}},V^{\tilde{\pi}}\otimes_{*_{\pi_2}}V^{\otimes \pi_3})$ can be described by the following coefficients with respect to the chosen basis of the one dimensional vector space of each component.
\begin{equation}
\begin{tikzcd}[scale cd=0.8]
\bullet_a\arrow[d]\arrow[r]\arrow[rd,phantom,"\widehat{C}"]&\circ_a\arrow[d]\\
\bullet_{(a-1)}\arrow[r]&\circ_{(a-1)}
\end{tikzcd}
=\frac{1}{\big(h(a)h(a-1)\big)^{1/4}},\quad
\begin{tikzcd}[scale cd=0.8]
	\bullet_a\arrow[d]\arrow[r]\arrow[rd,phantom,"\widehat{C}"]&\circ_a\arrow[d]\\
	\bullet_{(a+1)}\arrow[r]&\circ_{(a+1)}
\end{tikzcd}
=\frac{1}{\big(h(a)h(a+1)\big)^{1/4}}
\end{equation}

\begin{lemma}
$\widehat{C}$ is an $\check{R}^{sos}$ and $\check{R}^{\text{sym-sos}}$ intertwiner.
\end{lemma}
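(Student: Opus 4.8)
The plan is to exploit that the connecting groupoid $\hat\pi$ consists only of identity arrows and that $\pi_3=\pi_2$, $V^{\pi_3}=V^{\pi_2}$, so that $\widehat C$ is a pure diagonal gauge transformation and the $RCC$ relation \eqref{eq:RCC} reduces to a scalar identity on each source fiber. Concretely, I would first show that the composite $\mathcal G:=\widehat C^{(12)}\widehat C^{(23)}$, restricted to the source fiber over a vertex $a$, is diagonal in the basis of length-two paths: tracking the two $\widehat C$-squares (with the connecting $\hat\pi$-arrows $\beta_{\bullet}$ sliding from the third to the first tensor factor), a path $a\to b\to c$ in $\pi_2$ is sent to $G(a,b,c)$ times the corresponding path $a\to b\to c$ in $\pi_3$, where $G(a,b,c)=g(a,b)\,g(b,c)$ and $g(x,y):=\bigl(h(x)h(y)\bigr)^{-1/4}$ is the defining coefficient of $\widehat C$ on the edge $x\to y$. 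Because the $\hat\pi$-arrows are identities they carry zero weight, so there is no dynamical shift and both $\check R^{\text{sos}}(z,a)$ and $\check R^{\text{sym-sos}}(z,a)$ sit over the same vertex $a$.

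With this, \eqref{eq:RCC} becomes, on the fiber over $a$, the matrix identity $\check R^{\text{sym-sos}}(z,a)=D\,\check R^{\text{sos}}(z,a)\,D^{-1}$, where $D$ is the diagonal matrix whose entries are the numbers $G$ attached to the four paths (in the ordering $++,\,+-,\,-+,\,--$ fixed by \eqref{eq:identi_source_fiber}). I would then simply evaluate these factors. The two ``straight'' paths and all diagonal entries are fixed by any diagonal conjugation, so only the $+-,\,-+$ block needs checking. One computes $G(+-)=\bigl(h(a)h(a+1)\bigr)^{-1/2}$ and $G(-+)=\bigl(h(a)h(a-1)\bigr)^{-1/2}$, hence $G(+-)/G(-+)=\sqrt{h(a-1)/h(a+1)}$.

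Finally I would verify that the off-diagonal entries transform correctly:
\[
\frac{G(+-)}{G(-+)}\cdot\frac{h(a+1)h(z)}{h(a)h(z+1)}=\frac{\sqrt{h(a+1)h(a-1)}\,h(z)}{h(a)h(z+1)},
\qquad
\frac{G(-+)}{G(+-)}\cdot\frac{h(a-1)h(z)}{h(a)h(z+1)}=\frac{\sqrt{h(a+1)h(a-1)}\,h(z)}{h(a)h(z+1)},
\]
which are exactly the off-diagonal entries of $\check R^{\text{sym-sos}}(z,a)$; together with the unchanged diagonal and corner entries this proves the claim. The main obstacle is purely bookkeeping: correctly identifying which tensor slot holds the connecting arrow at each stage, the order in which $\widehat C^{(12)}$ and $\widehat C^{(23)}$ compose, and confirming that $\mathcal G$ is genuinely diagonal (not merely block-diagonal) so that the relation collapses to an entrywise comparison. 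Once that is in place, the remaining step is the short geometric-mean check above, which uses no property of the theta functions beyond the definition of $h$.
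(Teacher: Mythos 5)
Your proof is correct, and the computation is the natural one: the paper states this lemma without giving a proof, so your verification fills in exactly the omitted bookkeeping. Since $\hat\pi$ consists of identity arrows and all components are one-dimensional, $\widehat C^{(12)}\widehat C^{(23)}$ is indeed diagonal on each source fiber with entries $G(a,b,c)=\bigl(h(a)h(b)\bigr)^{-1/4}\bigl(h(b)h(c)\bigr)^{-1/4}$, the relation \eqref{eq:RCC} collapses to $\check R^{\text{sym-sos}}(z,a)=D\,\check R^{\text{sos}}(z,a)\,D^{-1}$ with no dynamical shift, and your ratio $G(+-)/G(-+)=\sqrt{h(a-1)/h(a+1)}$ turns both off-diagonal entries into $\sqrt{h(a+1)h(a-1)}\,h(z)/\bigl(h(a)h(z+1)\bigr)$ while leaving the (already matching) diagonal entries fixed.
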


\begin{prop}
$\check{R}^{8v}$ and $\check{R}^{\text{sym-sos}}$ are symmetric equivalent as defined in proposition \ref{prop:equivalence} with the intertwiner $\widehat{C}^{(23)}C^{(12)}$ and also equivalent under the relation transfer matrix equivalence proposition \ref{prop:transfer_matrix_equavalence}.
\end{prop}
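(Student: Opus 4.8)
The plan is to assemble the two intertwiners already constructed into a single intertwiner between $\check{R}^{8v}$ and $\check{R}^{\text{sym-sos}}$ and then invoke the two abstract equivalence results. First I would recall that both $\check{R}^{8v}$ and $\check{R}^{\text{sym-sos}}$ are symmetric in the sense of \eqref{eq:symmetric_R}; this is exactly the content of the two lemmas preceding this proposition. Since $C$ is an $(\check{R}^{8v},\check{R}^{sos})$ intertwiner (Baxter's theorem) and $\widehat{C}$ is an $(\check{R}^{sos},\check{R}^{\text{sym-sos}})$ intertwiner, the composition proposition \ref{prop:composition} immediately yields that $\widehat{C}^{(23)}C^{(12)}$ is an $(\check{R}^{8v},\check{R}^{\text{sym-sos}})$ intertwiner, which is precisely the intertwiner named in the statement.

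Granting this, the symmetric equivalence is essentially formal. Proposition \ref{prop:equivalence} asserts that the mere existence of an intertwiner between two symmetric Yang--Baxter operators is an equivalence relation, and we have just exhibited such an intertwiner, so $\check{R}^{8v}$ and $\check{R}^{\text{sym-sos}}$ lie in the same symmetric-equivalence class.

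For the transfer matrix equivalence (proposition \ref{prop:transfer_matrix_equavalence}) the extra input is invertibility of the connecting intertwiner, which I would verify in two steps. The map $\widehat{C}$ is described by the coefficients $(h(a)h(a\pm 1))^{-1/4}$, which are nonzero wherever the theta products $h(a)$ do not vanish; on each source fiber $\widehat{C}$ is therefore represented, with respect to the $\pm$ grading, by an invertible diagonal block whose inverse is the obvious reciprocal, supplying the left and right inverses demanded by \eqref{eq:left_invert}--\eqref{eq:C_invertible}. The map $C$ is the genuinely delicate case: on a source fiber its nontrivial block is the $2\times 2$ matrix built from $H(\lambda(s^{+}+a-z-\xi))$, $\Theta(\lambda(s^{+}+a-z-\xi))$, $H(\lambda(s^{-}+a+z-\xi))$ and $\Theta(\lambda(s^{-}+a+z-\xi))$, and invertibility reduces to the non-vanishing of the determinant
\[
H\big(\lambda(s^{+}+a-z-\xi)\big)\Theta\big(\lambda(s^{-}+a+z-\xi)\big)-H\big(\lambda(s^{-}+a+z-\xi)\big)\Theta\big(\lambda(s^{+}+a-z-\xi)\big).
\]
I expect this to be the main obstacle, and I would handle it with the classical quasi-periodicity and addition relations for $H$ and $\Theta$ (the same identities underlying Baxter's original computation), which rewrite this determinant as a single nonzero theta factor, vanishing only on the discrete lattice of theta zeros and hence invertible generically in the spectral parameter.

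Finally, I would note that invertibility of $C$ and $\widehat{C}$ passes to their composite: composing the respective left and right inverses in the opposite order produces a two-sided inverse of $\widehat{C}^{(23)}C^{(12)}$, so this composite is an \emph{invertible} intertwiner between two symmetric operators. Applying proposition \ref{prop:transfer_matrix_equavalence} then places $\check{R}^{8v}$ and $\check{R}^{\text{sym-sos}}$ in the same transfer-matrix-equivalence class and guarantees that the transfer relation \eqref{eq:RC_CR} holds for their partial traces.
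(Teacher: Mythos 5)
Your proposal is correct and follows exactly the route the paper intends: compose Baxter's intertwiner $C$ with $\widehat{C}$ via Proposition \ref{prop:composition}, invoke the two symmetry lemmas, and then apply Propositions \ref{prop:equivalence} and \ref{prop:transfer_matrix_equavalence}. The paper leaves the proof implicit, so your explicit verification of invertibility (the diagonal blocks of $\widehat{C}$ and the generically nonvanishing theta determinant for $C$) is a welcome filling-in of detail rather than a departure.
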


\section{Twist}\label{sec:Twist}
In the subsection \ref{sub:int_bet_YBE}, when we take partial traces of homomorphisms $\Hom(V^{\pi_1}\otimes V^{\pi},V^{\pi}\otimes V^{\pi_2})$, it gives rise to modules of convolution algebras. More generally we can also take "matrix element" of various homomorphisms, this will gives us the various transfer operators defined below. We are mainly interested in the matrix element of the $\Hom(V^{\pi_1}\otimes V^{\pi},V^{\pi}\otimes V^{\pi_2})$ and $\Hom(V^{\pi_1}\otimes V^{\pi},V^{\pi_1}\otimes V^{\pi})$, so the corresponding transfer operator will mainly be two types, we call them transfer operator of shape $\square$ and $\rhd$.

\subsection{Transfer operator}\label{sub:transfer_operator}
\begin{definition}
A transfer operator of shape $\square$ from $V^{\pi_1}$ to $V^{\pi_2}$ are graded by two arrows of $V^{\pi}$ which denotes the initial degree and final degree. We denote the set of transfer operators by $\Gamma(\pi\times\pi,\overline{\operatorname{Hom}}(V^{\pi_1}, V^{\pi_2}))$,
it consists of maps $l:\pi\times \pi\to \overline{\operatorname{Hom}}(V^{\pi_1},V^{\pi_2})$ such that
\begin{itemize}
	\item [(1)] $l(\beta_1,\beta_2)\in \overline{\operatorname{Hom}}(V^{\pi_1},V^{\pi_2})_{\beta_1,\beta_2}$\\
	\item [(2)] for every $\beta_1,\beta_2$, the number of $\alpha,\gamma$ are finite.\\
\end{itemize}
and graphically it is simply
\begin{equation}
	l(\beta_1,\beta_2)\in \oplus_{\alpha,\gamma}
\begin{tikzcd}
	a_1\arrow[r,"\beta_2"]\arrow[d,"V^{\pi_1}_{\alpha}"']&c_1\arrow[d,"V^{\pi_2}_{\gamma}"]\\
	a_1\arrow[r,"\beta_1"]&c_1
\end{tikzcd}
\end{equation}

The transfer operators have the following fusion product, for any $f,g\in \Gamma(\pi\times\pi,\overline{\operatorname{Hom}}(V^{\pi_1}, V^{\pi_2}))$, $f*_{\pi_1}g\in \Gamma\big(\pi\times\pi,\overline{\operatorname{Hom}}(V^{\pi_1}\otimes_{\pi_1}V^{\pi_1}, V^{\pi_2}\otimes_{\pi_2}V^{\pi_2})\big)$ is defined by
\begin{equation}\label{eq:fusion_transfer12}
(f*_{\otimes}g)(\beta_1,\beta_2)=\sum_{\beta}f(\beta,\beta_2)\otimes_{\pi_1} g(\beta_1,\beta)\in \overline{\operatorname{Hom}}(V^{\pi_1}\otimes_{\pi_1}V^{\pi_1}, V^{\pi_2}\otimes_{\pi_2}V^{\pi_2})) 
\end{equation}
and graphically it is the 
\begin{equation}
\sum_{\beta,s_{\pi}(\beta)=a',\gamma_1,\gamma_2}
\begin{tikzcd}
	a\arrow[r,"\beta_2"]\arrow[d, "V^{\pi_1}_{\alpha_1}"']\arrow[rd,phantom,"f"]&c\arrow[d,"V^{\pi_2}_{\gamma_1}"]\\
	a'\arrow[r,"\beta"]\arrow[d,"V^{\pi_1}_{\alpha_2}"']\arrow[rd,phantom,"g"]&c'\arrow[d,"V^{\pi_2}_{\gamma_2}"]\\
	a''\arrow[r,"\beta_1"]&c''\\
\end{tikzcd}
\end{equation}

The transfer operators also carry action of the group $\operatorname{End}(V^{\pi_2})$, for an operator $Q\in \operatorname{End}(V^{\pi_2})$, the action is defined by composition with the image of each component of $l\in \Gamma\big(\pi\times\pi,\operatorname{Hom}(V^{\pi_1}, V^{\pi_2})\big)$, 
\[
Ql:=Ql(\beta_1,\beta_2)
\]

\end{definition}
Similarly we can define the transfer operator $\Gamma\big(\pi\times\pi, \overline{\operatorname{Hom}}(V^{\pi_2},V^{\pi_1})\big)$, it consists of maps $l:\pi\times\pi \to \overline{\operatorname{Hom}}(V^{\pi_2},V^{\pi_1})$ which graphically is
\begin{equation}
l(\beta_1,\beta_2)\in \oplus_{\alpha,\gamma}
\begin{tikzcd}
c\arrow[d,"V^{\pi_2}_{\gamma}"']&a\arrow[l,"\beta_1"']\arrow[d,"V^{\pi_1}_{\alpha}"]\\
c'&a'\arrow[l,"\beta_2"]
\end{tikzcd}
\end{equation}
and the transfer operators have the following fusion product
\begin{align*}
(f*g)(\beta_1,\beta_2)=\sum_{\beta}f(\beta_1,\beta)\otimes_{\pi_2}g(\beta,\beta_2)\in\overline{\operatorname{Hom}}(V^{\pi_2}\otimes_{\pi_2}V^{\pi_2}, V^{\pi_1}\otimes_{\pi_1}V^{\pi_1})
\end{align*}
graphically it is simply
\begin{equation}
\sum_{\beta,t_{\pi}(\beta)=c',\alpha_1,\alpha_2}
\begin{tikzcd}
	c\arrow[d,"V^{\pi_2}_{\gamma_1}"']&a\arrow[ld,phantom,"f"]\arrow[l,"\beta_1"']\arrow[d,"V^{\pi_1}_{\alpha_1}"]\\
	c'\arrow[d,"V^{\pi_2}_{\gamma_2}"']&a'\arrow[ld,phantom,"g"]\arrow[l,"\beta"]\arrow[d,"V^{\pi_1}_{\alpha_2}"]\\
	c''&a''\arrow[l,"\beta_2"]
\end{tikzcd}
\end{equation}
and notice the difference of orders with \eqref{eq:fusion_transfer12}   there is also the action of $\End(V^{\pi_1})$.

\begin{example}[Verma type graded vector space]
A large source of examples of the transfer operator of type $\square$ come from taking the matrix elements of homomorphism between groupoid vector space, one kind of these graded vector spaces that are most relevant to later considerations are the following.

A $\pi$ graded vector space is of \emph{Verma} type if for any $\beta\in \pi$, there is a one dimensional distinguished sub vector space generated by vector $v_{\beta}$ for each $V^{\pi}_{\beta}$, then we can define the linear form
\begin{equation*}
\begin{aligned}
&v^{*}_{\beta}(v)=1,\quad \rm{if}\quad v=v_{\beta}\\
&v^{*}_{\beta}(v)=0,\quad \rm{if}\quad v\ne v_{\beta}\\	
\end{aligned}
\end{equation*}

Assume that $V^{\pi}$ is of \emph{Verma type}, then for a map $J\in \operatorname{Hom}(V^{\pi_1}\otimes V^{\pi},V^{\pi}\otimes V^{\pi_2})$, we can define its matrix element $\operatorname{Mat}_{V^{\pi}}J\in \Gamma\big(\pi\times \pi,\overline{\operatorname{Hom}}(V^{\pi_2},V^{\pi_1})\big)$ with respect to the distinguished basis element as follows
\begin{equation}
\operatorname{Mat}_{V^{\pi}}J: \beta_1\times \beta_2 \mapsto \oplus_{\alpha,\gamma}(v^{*}_{\beta_2}\otimes 1)J(1\otimes v_{\beta_1})\in \overline{\operatorname{Hom}}(V^{\pi_1}, V^{\pi_2})_{\beta_1,\beta_2}
\end{equation}
\end{example}

%

Now we introduce another type of graded vector space, let $V^{\pi_1}\in \operatorname{Vect}_k(\pi_1)$, we use $\operatorname{End}_{\rhd}(V^{\pi_1})$ to denote the collection of $\pi\times\pi$ graded vector spaces, for each $\beta_1,\beta_2$ with $t_{\pi}(\beta_1)=t_{\pi}(\beta_2)$, $\operatorname{End}_{\rhd}(V^{\pi_1})_{\beta_1,\beta_2}=\oplus_{\alpha_1,\alpha_2,\beta_1\circ\alpha_1=\beta_2\circ\alpha_2}\Hom({V^{\pi_1}_{\alpha_1}},V^{\pi_1}_{\alpha_2})$, and
graphically it is simply
\begin{equation}
\begin{tikzcd}
	&a\arrow[ld,"V^{\pi_1}_{\alpha_1}"']\arrow[rd,"V^{\pi_1}_{\alpha_2}"]&\\
	a'\arrow[rd,"\beta_1"']&&a''\arrow[ld,"\beta_2"]\\
	&c&\\
\end{tikzcd}
\quad \rm{or} \quad
\begin{tikzcd}
	a\arrow[d,"V^{\pi_1}_{\alpha_1}"']\arrow[r,"V^{\pi_1}_{\alpha_2}"]&a''\arrow[d,"\beta_2"]\\
	a'\arrow[r,"\beta_1"']&c\\
\end{tikzcd}
\end{equation}

\begin{definition}
Transfer operators of shape $\rhd$ are maps from $\pi\times \pi\to \operatorname{End}_{\rhd}(V^{\pi_1})$ with finite condition that for each $a\in \operatorname{\pi_1}$, there are finitely many arrows with source points $a$, we denote it by $\Psi(\pi\times \pi,\operatorname{End}_{\rhd}(V^{\pi_1}))$. 

For any two maps $l_1,l_2\in \Psi(\pi\times \pi,\operatorname{End}_{\rhd}(V^{\pi_1}))$, the product $l_1*_{\rhd}l_2\in \Psi(\pi\times \pi,\operatorname{End}_{\rhd}(V^{\pi_1}))$ is defined by
\begin{equation}
l_1*_{\rhd}l_2(\beta_1,\beta_2)=\sum_{\beta}l_1(\beta,\beta_2)l_2(\beta_1,\beta)
\end{equation}

And similarly we can define the space $\Psi(\pi\times \pi,\End_{\rhd}(V^{\pi_2}))$.
\end{definition}

\begin{example}
	When we consider the homomorphism $S\in \Hom(V^{\pi_1}\otimes V^{\pi},V^{\pi_1}\otimes V^{\pi})$, assume that $V^{\pi}$ is of Verma type, then the matrix element $\rm{Mat}_{V^{\pi}}S\in \Psi(\pi\times\pi,\End(V^{\pi_1}))$ of $S$ with respect to the distinguished basis are
	\begin{equation}
		\rm{Mat}_{V^\pi}S:\beta_1\times \beta_2\mapsto \oplus_{\alpha_1,\alpha_2}(v^{*}_{\beta_2}\otimes 1)S(v_{\beta_1}\otimes 1)\in \End_{\rhd}(V^{\pi_1})_{\beta_1,\beta_2}
	\end{equation}
\end{example}

\begin{definition}[Connecting system for $\pi_2$]\label{def:connecting_system_2}
A connecting system for $\pi_2$ is a set $\Sigma_{\pi_2}$ of arrows in $\pi$, such that for	
each path $\gamma\in \pi_2$, there exists unique $\beta_{\gamma}\in \Sigma_{\pi_2},\beta_{\gamma}\in \pi$ with the condition $t_{\pi}(\beta_{\gamma})=s_{\pi_2}(\gamma)$, they satisfy the compatible condition
\begin{equation}
\beta_{\gamma\circ \gamma'}=\beta_{\gamma'}.
\end{equation}
And for each $\beta\in \Sigma_{\pi_2}$, there exist at least one $\gamma\in \pi_2$ such that $\beta=\beta_{\gamma}$.

The connecting system is called \emph{unique} if
\begin{equation}\label{eq:unique_connecting_system}
s^{-1}_{\pi}(s_{\pi}(\beta_{\gamma}))=\beta_{\gamma},\forall \gamma
\end{equation} 
which means that there is only one unique edge starting from the point $s_{\pi}(\beta_{\gamma})$.
\end{definition}

\begin{definition}[Connecting system for $\pi_1$]\label{def:connecting_system_1}
A connecting system for $\pi_1$ is a set $\Sigma_{\pi_1}$ of arrows in $\pi$, such that for each path $\alpha\in \pi_1$, there exists a unique $\beta^{\alpha}\in \pi$ with $s_{\pi}(\beta)=t_{\pi_1}(\alpha)$ and satisfies the compatible condition
\begin{equation}
\beta^{\alpha\circ\alpha'}=\beta^{\alpha'}.
\end{equation}
And for each $\beta\in \Sigma_{\pi_1}$, there exist at least one $\alpha$ such that $\beta=\beta^\alpha$.

The connecting system is called \emph{unique} if $t_{\pi}^{-1}(t_{\pi}(\beta))=\beta$, that is there is only a unique edge with target the point $t_{\pi}(\beta)$.
\end{definition}

\begin{lemma}\label{lemma_endo}
For each connecting system $\Sigma_{\pi_1}$, there exist an algebra map $\rhd$ from $\End(V^{\pi_1})$ to $\Psi(\pi\times \pi,\operatorname{End}_{\rhd}(V^{\pi_1}))$,
\begin{equation}
\rhd:\operatorname{End}(V^{\pi_1})\rightarrow\Psi(\pi\times \pi,\operatorname{End}_{\rhd}(V^{\pi_1})), l\mapsto l_{\rhd}
\end{equation}
And also the inverse direction algebra map
\begin{equation}
\lhd:\Psi(\pi\times \pi,\operatorname{End}_{\rhd}(V^{\pi_1}))\rightarrow \operatorname{End}(V^{\pi_1}), f\mapsto f_{\lhd}
\end{equation}
\end{lemma}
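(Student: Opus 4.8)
The plan is to construct both maps explicitly from the section $\alpha\mapsto\beta^\alpha$ furnished by the connecting system $\Sigma_{\pi_1}$, and then to check separately that each is well defined, unital, and multiplicative; the map $\lhd$ will turn out to be a retraction of $\rhd$.

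For $\rhd$, given $l=(l_\alpha)_{\alpha\in\pi_1}\in\End(V^{\pi_1})$ with $l_\alpha\in\operatorname{End}_k(V^{\pi_1}_\alpha)$, I would set
\[
l_\rhd(\beta_1,\beta_2)=\delta_{\beta_1,\beta_2}\bigoplus_{\alpha\,:\,t_{\pi_1}(\alpha)=s_\pi(\beta_1)}l_\alpha,
\]
where $l_\alpha$ is placed in the diagonal summand $\alpha_1=\alpha_2=\alpha$ of $\operatorname{End}_{\rhd}(V^{\pi_1})_{\beta_1,\beta_1}$ (this summand occurs since $\beta_1\circ\alpha=\beta_1\circ\alpha$ trivially). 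One first checks $l_\rhd\in\Psi(\pi\times\pi,\operatorname{End}_{\rhd}(V^{\pi_1}))$: the grading constraint holds by construction, and the finiteness condition is inherited from the finite-type hypothesis on $V^{\pi_1}$. For multiplicativity I would expand
\[
(l_\rhd *_\rhd l'_\rhd)(\beta_1,\beta_2)=\sum_{\beta}l_\rhd(\beta,\beta_2)\,l'_\rhd(\beta_1,\beta);
\]
the factors $\delta_{\beta,\beta_2}\delta_{\beta_1,\beta}$ collapse the intermediate sum to $\beta_1=\beta=\beta_2$, and on each diagonal summand the product becomes the composition $l_\alpha l'_\alpha$, giving $(l_\rhd *_\rhd l'_\rhd)=(l\cdot l')_\rhd$; unitality is immediate.

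For $\lhd$, given $f\in\Psi(\pi\times\pi,\operatorname{End}_{\rhd}(V^{\pi_1}))$, I would define $(f_\lhd)_\alpha$ to be the $\operatorname{End}_k(V^{\pi_1}_\alpha)$ component of $f(\beta^\alpha,\beta^\alpha)$. This is well defined precisely because the connecting-system condition $s_\pi(\beta^\alpha)=t_{\pi_1}(\alpha)$ guarantees that the diagonal summand $\Hom(V^{\pi_1}_\alpha,V^{\pi_1}_\alpha)$ genuinely appears in $\operatorname{End}_{\rhd}(V^{\pi_1})_{\beta^\alpha,\beta^\alpha}$, so the projection makes sense and yields a family of honest endomorphisms, i.e. an element of $\End(V^{\pi_1})$. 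Composing the two constructions, $\lhd\circ\rhd=\id$ follows directly, since the diagonal component of $l_\rhd(\beta^\alpha,\beta^\alpha)$ at $\alpha$ is exactly $l_\alpha$.

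The main obstacle is the multiplicativity of $\lhd$, namely $(f*_\rhd g)_\lhd=f_\lhd\cdot g_\lhd$. Unwinding definitions gives
\[
(f*_\rhd g)(\beta^\alpha,\beta^\alpha)=\sum_{\beta}f(\beta,\beta^\alpha)\,g(\beta^\alpha,\beta),
\]
and the $\operatorname{End}_k(V^{\pi_1}_\alpha)$ component receives, for each $\beta$, the contributions of all intermediate degrees $\alpha'$ with $\beta\circ\alpha'=\beta^\alpha\circ\alpha$. The heart of the argument is to show that the compatibility relation $\beta^{\alpha\circ\alpha'}=\beta^{\alpha'}$ of the connecting system forces the surviving intermediate term to be the one indexed by $\beta=\beta^\alpha$ (and $\alpha'=\alpha$), so that the sum reduces to $f(\beta^\alpha,\beta^\alpha)|_{\alpha}\circ g(\beta^\alpha,\beta^\alpha)|_{\alpha}=(f_\lhd)_\alpha(g_\lhd)_\alpha$. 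I expect this collapse, rather than either well-definedness statement, to be where all the work lies; it is exactly the point at which the section property of $\Sigma_{\pi_1}$ is used, and the natural place to invoke (or, if necessary, to impose) the uniqueness hypothesis of Definition \ref{def:connecting_system_1}. Once this is established, unitality of $\lhd$ is clear, completing the proof that both $\rhd$ and $\lhd$ are algebra maps.
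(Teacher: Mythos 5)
Your construction is exactly the one the paper uses: $l_\rhd(\beta,\beta)=\oplus_{\alpha:\,t_{\pi_1}(\alpha)=s_\pi(\beta)}l_\alpha$ with all other components set to zero, and $f_\lhd$ obtained by reading off the component of $f(\beta^\alpha,\beta^\alpha)$ at the degree $\alpha$. The paper's own proof is in fact terser than your proposal --- it simply asserts that both assignments are algebra morphisms --- so the one step you single out as requiring real work (the collapse of the intermediate sum in $(f*_\rhd g)_\lhd$ to the single term $\beta=\beta^\alpha$, which is where the section and compatibility properties of $\Sigma_{\pi_1}$ enter) is not spelled out there either; your account is, if anything, the more complete of the two.
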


\begin{proof}
For each operator $S\in \operatorname{End}(V^{\pi_1})$, with the choice of $\Sigma_{\pi_1}$, it induces a map $S_{\rhd} \in\Psi(\pi\times \pi,\operatorname{End}_{\rhd}(V^{\pi_1}))$, for each $\beta\in \Sigma_{\pi_1}$,
\begin{equation}
S_{\rhd}(\beta,\beta)=\oplus_{\alpha,t_{\pi_1}(\alpha)=s_{\pi}(\beta)}S_{\alpha}
\end{equation}
other components of $S$ are set to be 0.

And similarly, with the fixed choice of $\beta$, for each map $l\in \Psi(\pi\times \pi,\operatorname{End}(V^{\pi_1}))$, the component $l(\beta,\beta)$ gives an operator in $\operatorname{End}(V^{\pi_1})$, this map is an algebra morphism.
\end{proof}

With the above preparation, we now can introduce the notion of invertibility for the transfer operator, which are parallel to the invertibility introduced in \eqref{eq:left_invert} and \eqref{eq:right_invert}. The difference is that instead of the vector spaces graded $\pi$ as in \eqref{eq:left_invert} and \eqref{eq:right_invert}, $\pi$ here are purely gradings and no vector spaces involved.

\begin{definition}
For $f\in \Gamma(\pi\times\pi,\overline{\operatorname{Hom}}(V^{\pi_1}, V^{\pi_2}))$, it is \emph{left invertible} if there exists $f^{-1}\in \Gamma(\pi\times\pi,\overline{\operatorname{Hom}}(V^{\pi_2}, V^{\pi_1}))$ such that 
\begin{equation}\label{eq:transfer_left_invert}
f^{-1}*_{\circ}f(\beta_1,\beta_2):=\sum_{\beta}f^{-1}(\beta,\beta_2)f(\beta_1,\beta)=\delta_{\beta_1,\beta_2}\operatorname{id}_{f}(\beta_1,\beta_1)\in \End_{\rhd}(V^{\pi_1})
\end{equation}
here $\operatorname{id}_{f}\in \Psi(\pi\times \pi,\operatorname{End}_{\rhd}(V^{\pi_1}))$, and each of its component $\id_{f}(\beta_1,\beta_1)$ is the identity map on $V^{\pi_1}_{\alpha}$ for every $V^{\pi_1}_\alpha$ that $f$ transfers along $\beta_1$.

And it is \emph{right invertible} if there exists $f^{-1}\in \Gamma(\pi\times\pi,\overline{\operatorname{Hom}}(V^{\pi_2}, V^{\pi_1}))$ such that
\begin{equation}\label{eq:transfer_right_invert}
f*_{\circ}f^{-1}(\beta_1,\beta_2):=\sum_{\beta}f^{-1}(\beta,\beta_2)f(\beta_1,\beta)=\delta_{\beta_1,\beta_2}\operatorname{id}_{f^{-1}}(\beta_1,\beta_1)\in \End_{\rhd}(V^{\pi_2})
\end{equation}
here $\id_{f^{-1}}\in\Psi\big(\pi\times \pi,\operatorname{End}_{\rhd}(V^{\pi_2})\big)$, and each of its component $\id_{f^{-1}}(\beta_1,\beta_1)$ is identity map on $V^{\pi_2}_{\gamma}$ for any $V^{\pi_2}_{\gamma}$ that $f^{-1}$ transfers along $\beta_1$.

$f$ is called invertible if it satisfies both \eqref{eq:transfer_left_invert} and \eqref{eq:transfer_right_invert} with the same $f^{-1}$.
\end{definition}

Similar definition for the maps in $\Gamma(\pi\times\pi,\overline{\operatorname{Hom}}(V^{\pi_2}, V^{\pi_1}))$ and also its tensor product.

\begin{lemma}\label{lem:factorization_invertible}
Suppose that $f_1,f_2\in \Gamma\big(\pi\times\pi,\overline{\operatorname{Hom}}(V^{\pi_1}, V^{\pi_2})\big)$ is right invertible with inverse maps $f_1^{-1}, f_2^{-1}$, then $f_1*_{\otimes}f_2\in \Gamma(\pi\times\pi,\overline{\operatorname{Hom}}(V^{\pi_1}\otimes V^{\pi_1},V^{\pi_2}\otimes V^{\pi_2})$ is right invertible with inverse $f^{-1}_1*_{\otimes}f^{-1}_{2}$.
\end{lemma}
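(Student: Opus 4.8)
The plan is to verify the single defining equation of right invertibility, namely that the composite $(f_1*_{\otimes}f_2)*_{\circ}(f_1^{-1}*_{\otimes}f_2^{-1})$ equals the identity transfer operator $\delta_{\beta_1,\beta_2}\operatorname{id}$ on $V^{\pi_2}\otimes_{\pi_2}V^{\pi_2}$ in the sense of \eqref{eq:transfer_right_invert}. Since only right invertibility is claimed, I would not touch the opposite composite. First I would record that $f_1^{-1}*_{\otimes}f_2^{-1}$ is a well-defined element of $\Gamma\big(\pi\times\pi,\overline{\operatorname{Hom}}(V^{\pi_2}\otimes V^{\pi_2},V^{\pi_1}\otimes V^{\pi_1})\big)$: each fusion sum \eqref{eq:fusion_transfer12} is finite by the finiteness condition in the definition of transfer operators, so the candidate inverse makes sense and one is allowed to manipulate the sums freely.

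The core of the argument is an interchange (middle-four) law relating the fusion product $*_{\otimes}$ and the composition product $*_{\circ}$. Expanding both fusion products inside the composition, the component of the composite at $(\beta_1,\beta_2)$ becomes a triple sum over the three intermediate $\pi$-gradings (one coming from $*_{\circ}$ and one from each $*_{\otimes}$) of terms of the shape $[f_1(\beta',\beta_2)\otimes f_2(\beta,\beta')]\circ[f_1^{-1}(\beta_1,\beta'')\otimes f_2^{-1}(\beta'',\beta)]$. Applying bifunctoriality of $\otimes$ on the underlying component linear maps, $(A\otimes B)\circ(C\otimes D)=(A\circ C)\otimes(B\circ D)$ with incompatible gradings contributing zero, each summand factors as $[f_1(\beta',\beta_2)\circ f_1^{-1}(\beta_1,\beta'')]\otimes[f_2(\beta,\beta')\circ f_2^{-1}(\beta'',\beta)]$. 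This bifunctoriality step, together with checking that the intermediate $V^{\pi_1}\otimes V^{\pi_1}$ gradings in the composition line up across the two tensor factors, is the step I expect to require the most care, since it is precisely where the $\pi$, $\pi_1$, $\pi_2$ fiber bookkeeping enters; everything else is formal.

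Once the terms are factored, the triple sum collapses using the individual right-invertibility of $f_1$ and $f_2$. Summing the second tensor factor over the composition index $\beta$ first, I would recognize $\sum_{\beta}f_2(\beta,\beta')\circ f_2^{-1}(\beta'',\beta)=(f_2*_{\circ}f_2^{-1})(\beta'',\beta')=\delta_{\beta',\beta''}\operatorname{id}$ by \eqref{eq:transfer_right_invert} applied to $f_2$. The resulting Kronecker delta collapses the $\beta''$-sum, leaving $\sum_{\beta'}[f_1(\beta',\beta_2)\circ f_1^{-1}(\beta_1,\beta')]\otimes\operatorname{id}=(f_1*_{\circ}f_1^{-1})(\beta_1,\beta_2)\otimes\operatorname{id}=\delta_{\beta_1,\beta_2}\operatorname{id}\otimes\operatorname{id}$, again by right-invertibility of $f_1$. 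Since $\operatorname{id}\otimes\operatorname{id}$ is exactly the identity on the fused space $V^{\pi_2}\otimes_{\pi_2}V^{\pi_2}$, this is the required identity $\delta_{\beta_1,\beta_2}\operatorname{id}_{f_1^{-1}*_{\otimes}f_2^{-1}}$, establishing that $f_1*_{\otimes}f_2$ is right invertible with inverse $f_1^{-1}*_{\otimes}f_2^{-1}$.

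Graphically this is the statement that fusing (vertical stacking of squares) and the $*_{\circ}$ composition (horizontal gluing) satisfy a double-category-style interchange: the composite builds a $2\times 2$ grid of squares, and each of the two horizontal rows telescopes to an identity by the separate right-invertibility of $f_1$ and $f_2$. I would draw this picture to make the collapse transparent, but the index computation above is what I would write as the formal proof.
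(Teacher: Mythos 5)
Your proposal is correct and follows essentially the same route as the paper's proof: expand the composite at $(\beta_1,\beta_2)$ into a triple sum, apply the interchange law to factor each summand as $(f_1\circ f_1^{-1})\otimes(f_2\circ f_2^{-1})$, collapse the inner sum using right-invertibility of $f_2$ to produce $\delta_{\beta',\beta''}$, and then collapse the remaining sum using right-invertibility of $f_1$. The paper performs exactly this index computation and, like you, treats the interchange step as the only point requiring grading bookkeeping.
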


\begin{proof}
We can directly check the definitions
\begin{equation}
\begin{aligned}
&(f_1*_{\otimes} f_2)*_{\circ}(f^{-1}_1*_{\otimes}f^{-1}_2)(\beta_1,\beta_2)\\
&=\sum_{\beta}(f_1*_{\otimes} f_2)(\beta,\beta_2)(f^{-1}_1*_{\otimes}f^{-1}_2)(\beta_1,\beta)\\
&=\sum_{\beta',\beta,\beta''}(f_1(\beta',\beta_2)\otimes_{\pi_1} f_2(\beta,\beta')))(f^{-1}_1(\beta_1,\beta'')\otimes_{\pi_2} f^{-1}_2(\beta'',\beta))\\
&=\sum_{\beta',\beta''}(f_1(\beta',\beta_2)f^{-1}_1(\beta_1,\beta''))\otimes_{\pi_2} \sum_{\beta}f_2(\beta,\beta')f^{-1}_2(\beta'',\beta)\\
&=\sum_{\beta',\beta''}(f_1(\beta',\beta_2)f^{-1}_1(\beta_1,\beta''))\otimes_{\pi_2}\delta_{\beta',\beta''}\operatorname{id}_{f^{-1}_2}(\beta',\beta')\\
&=\delta_{\beta_1,\beta_2}\operatorname{id}_{f^{-1}_1\otimes f^{-1}_2}(\beta_1,\beta_1)	
\end{aligned}
\end{equation}	

And in a similar way, we can prove the situation for the left inverse	$(f^{-1}_1*_{\otimes}f^{-1}_2)*_{\circ}(f_1*_{\otimes} f_2)$ and also the inverse.
\end{proof}

\subsection{Unique connecting system}\label{sub:unique_twist}
\hfill\\
We fix a unique connecting system $\Sigma_{\pi_2}$ for $\pi_2$. Suppose that we have an Yang--Baxter operator $\check{R}_1$ on $V^{\pi_1}$, given a right invertible map $j\in \Gamma(\pi\times \pi,\overline{\operatorname{Hom}}(V^{\pi_2}\otimes V^{\pi_2}, V^{\pi_1}\otimes V^{\pi_1}))$, we can define the operator $\check{R}_2\in \Psi\big(\pi\times\pi,\operatorname{End}_{\rhd}(V^{\pi_2}\otimes V^{\pi_2})\big)$, for any $\beta_1\in \Sigma_{\pi_2}$,
\begin{equation}
\check{R}_2(\beta_1,\beta_1)=j^{-1}*_{\circ}(\check{R_1}j):=\sum_{\beta}j^{-1}(\beta,\beta_1)\check{R}_1j(\beta_1,\beta)
\end{equation}
and graphically it is simply
\begin{equation}\label{eq:graph_twist}
	\sum_{\color{red}\rm{red}}
	\begin{tikzcd}
		&\circ\arrow[ld,"V^{\pi_2}_{\gamma_1}"']&\bullet\arrow[l,"\beta_1"']\arrow[ld,red]\arrow[rd,red]\arrow[r,"\beta_1"]&\circ\arrow[rd,"V^{\pi_2}_{\gamma_3}"]&\\
		\circ\arrow[rd,"V^{\pi_2}_{\gamma_2}"']&{\color{red}\bullet}\arrow[l,phantom,"j"]\arrow[rd,red]&\check{R}_1(z)&{\color{red}\bullet}\arrow[r,phantom,"j^{-1}"]\arrow[ld,red]&\circ\arrow[ld,"V^{\pi_2}_{\gamma_4}"]\\
		&\circ&{\color{red}\bullet}\arrow[l,red,"\beta''"]\arrow[r,red,"\beta''"']&\circ&
	\end{tikzcd}
	=
	\begin{tikzcd}
		\bullet\arrow[r,"\beta_1"]&\circ\arrow[ld,"V^{\pi_2}_{\gamma_1}"']\arrow[rd,"V^{\pi_2}_{\gamma_3}"]&\bullet\arrow[l,"\beta_1"']\\
		\circ\arrow[rd,"V^{\pi_2}_{\gamma_2}"']&\check{R}_2(z)&\circ\arrow[ld,"V^{\pi_2}_{\gamma_4}"]\\
		&\circ&
	\end{tikzcd}
\end{equation}

By the lemma \ref{lemma_endo} above , the map $\check{R}_2$ induce a map $(\check{R}_2)_{\lhd}\in \operatorname{End}(V^{\pi_2}\otimes V^{\pi_2})$.

The map $j$ is called a twist if there exists a right invertible map $q\in \Gamma(\pi \times \pi,\overline{\operatorname{Hom}}(V^{\pi_2}\otimes V^{\pi_2}\otimes V^{\pi_2}, V^{\pi_1}\otimes V^{\pi_1}\otimes V^{\pi_1}))$ such that
\begin{subequations}
\begin{equation}\label{eq:uqtwist23}
\operatorname{id}_{V^{\pi_2}}\otimes (\check{R}_2)_{\lhd}=(q^{-1}*_{\circ}\big((\operatorname{id}_{V^{\pi_1}}\otimes \check{R}_1)q\big))_{\lhd}
\end{equation}
\begin{equation}\label{eq:uqtwist12}
(\check{R}_2)_{\lhd}\otimes \operatorname{id}_{V^{\pi_2}}=(q^{-1}*_{\circ}\big((\check{R}_1\otimes \operatorname{id}_{V^{\pi_1}})q\big))_{\lhd}
\end{equation}
\end{subequations}
and graphically \eqref{eq:uqtwist12} is just
\begin{equation}\label{eq:graph_twist23}
	\sum_{\color{red}\rm{red}}
	\begin{tikzcd}
		&\circ\arrow[d,"V^{\pi_2}_{\gamma_1}"]&\bullet\arrow[l,"\beta_1"']\arrow[d,red]\arrow[r,"\beta_1"]&\circ\arrow[d,"V^{\pi_2}_{\gamma_1}"]&\\
		&\circ\arrow[ld,"V^{\pi_2}_{\gamma_2}"']&{\color{red}\bullet}\arrow[ld,red]\arrow[rd,red]&\circ\arrow[rd,"V^{\pi_2}_{\gamma_5}"]&\\
		\circ\arrow[rd,"V^{\pi_2}_{\gamma_3}"']&{\color{red}\bullet}\arrow[l,phantom,"q"]\arrow[rd,red]&\check{R}_1(z)&{\color{red}\bullet}\arrow[r,phantom,"q^{-1}"]\arrow[ld,red]&\circ\arrow[ld,"V^{\pi_2}_{\gamma_6}"]\\
		&\circ&{\color{red}\bullet}\arrow[l,red,"\beta''"]\arrow[r,red,"\beta''"']&\circ&
	\end{tikzcd}
	=
	\begin{tikzcd}
		&\circ\arrow[d,"V^{\pi_2}_{\gamma_1}"]&\\
		\bullet\arrow[r,"\beta"]&\circ\arrow[ld,"V^{\pi_2}_{\gamma_2}"']\arrow[rd,"V^{\pi_2}_{\gamma_5}"]&\bullet\arrow[l,"\beta"']\\
		\circ\arrow[rd,"V^{\pi_2}_{\gamma_3}"']&\check{R}_2(z)&\circ\arrow[ld,"V^{\pi_2}_{\gamma_6}"]\\
		&\circ&
	\end{tikzcd}
\end{equation}
and graphically \eqref{eq:uqtwist23} is just

\begin{equation}\label{eq:graph_twist12}
	\sum_{\color{red}\rm{red}}
	\begin{tikzcd}
		&\circ\arrow[ld,"V^{\pi_2}_{\gamma_1}"']&\bullet\arrow[l,"\beta_1"']\arrow[ld,red]\arrow[rd,red]\arrow[r,"\beta_1"]&\circ\arrow[rd,"V^{\pi_2}_{\gamma_4}"]&\\
		\circ\arrow[rd,"V^{\pi_2}_{\gamma_2}"']&{\color{red}\bullet}\arrow[l,phantom,"q"]\arrow[rd,red]&\check{R}_1(z)&{\color{red}\bullet}\arrow[r,phantom,"q^{-1}"]\arrow[ld,red]&\circ\arrow[ld,"V^{\pi_2}_{\gamma_5}"]\\
		&\circ\arrow[d,"V^{\pi_2}_{\gamma_3}"']&{\color{red}\bullet}\arrow[d,red]&\circ\arrow[d,"V^{\pi_2}_{\gamma_3}"]&\\
		&\circ&{\color{red}\bullet}\arrow[l,red,"\beta''"]\arrow[r,red,"\beta''"']&\circ&\\
	\end{tikzcd}
	=
	\begin{tikzcd}
		\bullet\arrow[r,"\beta_1"]&\circ\arrow[ld,"V^{\pi_2}_{\gamma_1}"']\arrow[rd,"V^{\pi_2}_{\gamma_4}"]&\bullet\arrow[l,"\beta_1"']\\
		\circ\arrow[rd,"V^{\pi_2}_{\gamma_2}"']&\check{R}_2(z)&\circ\arrow[ld,"V^{\pi_2}_{\gamma_5}"]\\
		&\circ\arrow[d,"V^{\pi_2}_{\gamma_3}"]&\\
		&\circ&\\
	\end{tikzcd}
\end{equation}

\begin{prop}
Suppose that the connecting system for $\pi_2$ is unique, if $j$ is a twist, then $(\check{R}_2)_{\lhd}$ is an Yang-Baxter operator on $V^{\pi_2}$.
\end{prop}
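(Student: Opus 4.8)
The plan is to transport the Yang--Baxter equation \eqref{eq:YBE} for $\check R_1$ to $(\check R_2)_{\lhd}$ by a single conjugation with the transfer operator $q$, in exact analogy with the classical identity $\check R_J^{(23)}\check R_J^{(12)}\check R_J^{(23)}=Q\big(\check R^{(23)}\check R^{(12)}\check R^{(23)}\big)Q^{-1}$ underlying \eqref{eq:intr_twist_3}. The crucial structural input is that the two defining identities of a twist, \eqref{eq:uqtwist12} and \eqref{eq:uqtwist23}, use the \emph{same} $q$: they say that $q$ conjugates $\check R_1\otimes\operatorname{id}$ to $(\check R_2)_{\lhd}\otimes\operatorname{id}$ and $\operatorname{id}\otimes\check R_1$ to $\operatorname{id}\otimes(\check R_2)_{\lhd}$. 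It is this coincidence that will let the intermediate conjugations cancel.

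First I would work in the algebra $\Psi\big(\pi\times\pi,\operatorname{End}_{\rhd}((V^{\pi_2})^{\otimes 3})\big)$. Writing $A=\check R_1\otimes\operatorname{id}$ and $B=\operatorname{id}\otimes\check R_1$, set $\mathcal F_A=q^{-1}*_{\circ}(Aq)$ and $\mathcal F_B=q^{-1}*_{\circ}(Bq)$, so that \eqref{eq:uqtwist12} and \eqref{eq:uqtwist23} read $(\mathcal F_A)_{\lhd}=(\check R_2)_{\lhd}\otimes\operatorname{id}$ and $(\mathcal F_B)_{\lhd}=\operatorname{id}\otimes(\check R_2)_{\lhd}$. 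Because the connecting system $\Sigma_{\pi_2}$ is unique, I would invoke the $V^{\pi_2}$-analogue of Lemma \ref{lemma_endo}, which makes $\lhd$ an algebra homomorphism from $(\Psi,*_{\rhd})$ to $(\operatorname{End},\circ)$; hence products of $\lhd$-images are $\lhd$ of $*_{\rhd}$-products. The computational heart is the telescoping identity
\[
\mathcal F_A *_{\rhd} \mathcal F_B = q^{-1}*_{\circ}\big((A\circ B)\,q\big),
\]
obtained by expanding the two convolutions and using the invertibility of $q$ from \eqref{eq:transfer_right_invert}--\eqref{eq:transfer_left_invert}: the sum over the intermediate arrow $\beta'$ produces the factor $\sum_{\beta'}q(\beta',\mu)\,q^{-1}(\nu,\beta')=\delta_{\mu\nu}\operatorname{id}$, which collapses the inner $qq^{-1}$ pair and leaves a single outer conjugation by $q$.

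With the telescoping lemma the main claim is immediate. Applying it twice and tracking the spectral parameters,
\[
\mathcal F_B(z{-}w)*_{\rhd}\mathcal F_A(z)*_{\rhd}\mathcal F_B(w)=q^{-1}*_{\circ}\big(B(z{-}w)A(z)B(w)\,q\big),
\]
and in the opposite order one gets $q^{-1}*_{\circ}\big(A(w)B(z)A(z{-}w)\,q\big)$. Since $\check R_1$ satisfies \eqref{eq:YBE}, the two inner operators $B(z{-}w)A(z)B(w)$ and $A(w)B(z)A(z{-}w)$ coincide in $\operatorname{End}((V^{\pi_1})^{\otimes 3})$, so the two transfer operators agree; applying $\lhd$ and re-reading through \eqref{eq:uqtwist12}--\eqref{eq:uqtwist23} gives the Yang--Baxter equation for $(\check R_2)_{\lhd}$. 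The inversion relation \eqref{eq:inversion_R} follows one level down by the same telescoping with $j$ in place of $q$: $\check R_2(z)*_{\rhd}\check R_2(-z)=j^{-1}*_{\circ}\big(\check R_1(z)\check R_1(-z)\,j\big)=j^{-1}*_{\circ}j=\operatorname{id}_{\rhd}$, which descends to $(\check R_2)_{\lhd}(z)(\check R_2)_{\lhd}(-z)=\operatorname{id}$.

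I expect the main obstacle to be the bookkeeping that makes $\lhd$ genuinely multiplicative in this situation: one must check that the uniqueness condition \eqref{eq:unique_connecting_system} is exactly what forces the intermediate summation in $*_{\rhd}$ to restrict to the single $\Sigma_{\pi_2}$-representative, so that $(\check R_2)_{\lhd}$ is a well-defined operator on $V^{\pi_2}\otimes V^{\pi_2}$ and the passage between $\Psi$ and $\operatorname{End}$ preserves composition rather than merely the additive structure. The spectral-parameter shifts are routine, since \eqref{eq:YBE} carries no dynamical shift, and the telescoping itself is a mechanical consequence of the invertibility of $q$ and $j$; the conceptual weight sits entirely in verifying the hypotheses of the $V^{\pi_2}$-version of Lemma \ref{lemma_endo} under the uniqueness assumption.
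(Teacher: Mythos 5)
Your proposal is correct and follows essentially the same route as the paper: the ``telescoping identity'' $\mathcal F_A *_{\rhd}\mathcal F_B=q^{-1}*_{\circ}\big((A\circ B)q\big)$ is exactly the paper's key step, where the inner $q\,q^{-1}$ pair collapses to $\delta_{\beta,\beta'}\operatorname{id}_{q^{-1}}$ by right invertibility of $q$ together with the uniqueness condition \eqref{eq:unique_connecting_system}, and the Yang--Baxter equation for $(\check R_2)_{\lhd}$ is then inherited from \eqref{eq:YBE} for $\check R_1$. The only difference is that you additionally check the inversion relation \eqref{eq:inversion_R} (note this uses $j^{-1}*_{\circ}j=\operatorname{id}$, i.e.\ left invertibility of $j$, which is slightly more than the stated hypothesis), a point the paper's proof does not address.
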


\begin{proof}
For each $\gamma\in \pi_2$, because the connecting system is unique, there exists a unique path $\beta_1$ for $\gamma$ with $t(\beta_1)=s(\gamma)$ and $s^{-1}_{\pi}(s_{\pi}(\beta_1))=\beta_1$.	
	
\begin{equation*}
\begin{aligned}
&\operatorname{id}_{V^{\pi_2}}\otimes (\check{R}_2)_{\lhd}|_{(V^{\pi_2}\otimes V^{\pi_2}\otimes V^{\pi_2})_{\gamma}}\\
&=\big(q^{-1}*_{\circ}\big((\operatorname{id}_{V^{\pi_1}}\otimes\check{R}_1 )q\big)(\beta_1,\beta_1)=\sum_{\beta}q^{-1}(\beta,\beta_1)(\operatorname{id}_{V^{\pi_1}}\otimes\check{R}_1)q(\beta_1,\beta)
\end{aligned}
\end{equation*}

\begin{equation*}
	\begin{aligned}
		&(\check{R}_2)_{\lhd}\otimes\operatorname{id}_{V^{\pi_2}} |_{(V^{\pi_2}\otimes V^{\pi_2}\otimes V^{\pi_2})_{\gamma}}\\
	 &=\big(q^{-1}*_{\circ}\big((\check{R}_1\otimes \operatorname{id}_{V^{\pi_1}})q\big)(\beta_1,\beta_1)=\sum_{\beta}q^{-1}(\beta,\beta_1)(\check{R}_1\otimes\operatorname{id}_{V^{\pi_1}})q(\beta_1,\beta)
	\end{aligned}
\end{equation*}
then we have that
\begin{equation*}
\begin{aligned}
&\big((\check{R}_2)_{\lhd}\otimes\operatorname{id}_{V^{\pi_2}}\big)\big(\operatorname{id}_{V^{\pi_2}}\otimes (\check{R}_2)_{\lhd}\big)|_{(V^{\pi_2}\otimes V^{\pi_2}\otimes V^{\pi_2})_{\gamma}}\\
&=\big(\sum_{\beta'}q^{-1}(\beta',\beta_1)(\check{R}_1\otimes\operatorname{id})q(\beta_1,\beta')\big)\big(\sum_{\beta}q^{-1}(\beta,\beta_1)(\operatorname{id}\otimes\check{R}_1)q(\beta_1,\beta)\big)
\end{aligned}
\end{equation*}

By the right invertibility of the $q$ and uniqueness of the connecting system, we have that
\begin{equation}
q(\beta_1,\beta')q^{-1}(\beta,\beta_1)=\delta_{\beta,\beta'}q*_{\circ}q^{-1}(\beta,\beta_1)=\delta_{\beta,\beta'}\id_{q^{-1}}(\beta,\beta)
\end{equation} 
so we have that
\begin{equation*}
	\begin{aligned}
		&\big((\check{R}_2)_{\lhd}\otimes\operatorname{id}_{V^{\pi_2}}\big)\big(\operatorname{id}_{V^{\pi_2}}\otimes (\check{R}_2)_{\lhd}\big)|_{(V^{\pi_2}\otimes V^{\pi_2}\otimes V^{\pi_2})_{\gamma}}\\
		&=\big(\sum_{\beta}q^{-1}(\beta,\beta_1)(\check{R}_1\otimes\operatorname{id})(\operatorname{id}\otimes\check{R}_1)q(\beta_1,\beta)\big)
	\end{aligned}
\end{equation*}
then we know that it satisfies the Yang-Baxter equation.
\end{proof}

\subsection{Quasi-unique connecting system}\label{sub:quasi_unique}
In the last section, when we talk about twist, we assume that the connecting system is unique, this condition is too restrictive and eliminate the possibility of the symmetry.
\begin{definition}[Quasi-unique connecting system]\label{def:qunique}
Given a connecting system $\Sigma_{\pi_2}$,for each path $\gamma\in \pi_2$, we have a connecting path $\beta'\in \Sigma_{\pi_2}$. $\Sigma_{\pi_2}$ is called quasi-unique, if there exists a $\gamma'\in \pi_2$ with $t(\gamma')=s(\gamma)$ and there exist an edge $\beta_1\in \Sigma_{\pi_2}$ with $t_{\pi}(\beta_1)=s(\gamma')$ and the edge $\beta_1$ satisfies the property \eqref{eq:quasi_unique}, graphically it is \eqref{eq:graph_quasi_uni}.
\begin{equation}\label{eq:quasi_unique}
s^{-1}_{\pi}(s_{\pi}(\beta_1))=\beta_1
\end{equation}
\begin{equation}\label{eq:graph_quasi_uni}
\begin{tikzcd}
	{}&a\arrow[l,"\beta_1"',"\text{unique}"]\arrow[r,"\beta_1"]& s(\gamma')\arrow[d,"\gamma'"]\\
	&a'\arrow[r,"\beta'"]&s(\gamma)\arrow[d,"\gamma"]\\
	&&t(\gamma)
\end{tikzcd}
\end{equation}
\end{definition}

Suppose we have an Yang-Baxter operator $\check{R}_1$ on $V^{\pi_1}$, given an right invertible map $j\in \Gamma(\pi\times \pi,\overline{\operatorname{Hom}}(V^{\pi_2}\otimes V^{\pi_2}, V^{\pi_1}\otimes V^{\pi_1}))$, we can define the operator $\check{R}_2\in \Psi(\pi\times\pi,\operatorname{End}_{\rhd}(V^{\pi_2}\otimes V^{\pi_2}))$, for any $\beta_1,\beta_2\in \pi$
\begin{equation}
\check{R}_2(\beta_2,\beta_1):=j^{-1}*_{\circ}(\check{R}_1j)=\sum_{\beta}j^{-1}(\beta,\beta_2)\check{R}_1j(\beta_1,\beta)
\end{equation}
graphically it is similar as \eqref{eq:graph_twist}.

The map $j$ is called a twist for this quasi-unique connecting system, if for any length 3 path $\gamma$, there exists a right invertible map $q_2\in \Gamma(\pi \times \pi,\overline{\operatorname{Hom}}(V^{\pi_2}\otimes V^{\pi_2}\otimes V^{\pi_2}, V^{\pi_1}\otimes V^{\pi_1}\otimes V^{\pi_1}))$, such that for any $\beta_2$ with $t_{\pi}(\beta_2)=s_{\pi_2}(\gamma)$ and any $s_{\pi}(\beta'_2)=s_{\pi}(\beta_2)$, we have the following weight zero or ice rule conditions \eqref{eq:ice_rule}.
\begin{subequations}\label{eq:ice_rule}
\begin{equation}\label{eq:quntwist23}
	\operatorname{id}_{V^{\pi_2}}\otimes (\check{R}_2)_{\lhd}\delta_{\beta_2,\beta'_2}=\big(q_2^{-1}*_{\circ}\big((\operatorname{id}_{V^{\pi_1}}\otimes \check{R}_1)q_2\big)\big)(\beta_2,\beta'_2)
\end{equation}
\begin{equation}\label{eq:quntwist12}
	(\check{R}_2)_{\lhd}\otimes \operatorname{id}_{V^{\pi_2}}\delta_{\beta_2,\beta'_2}=\big(q_2^{-1}*_{\circ}\big((\check{R}_1\otimes \operatorname{id}_{V^{\pi_1}})q_2\big)\big)(\beta_2,\beta'_2)
\end{equation}
\end{subequations}
And graphically the equation \eqref{eq:quntwist12} is equivalent to the following two relations
\begin{subequations}\label{eq:graph_uqtwist}
\begin{equation}\label{eq:graph_uqtwist12}
	\sum_{\color{red}\rm{red}}
	\begin{tikzcd}[scale cd=0.8]
		&\circ\arrow[ld,"V^{\pi_2}_{\gamma_1}"']&\bullet\arrow[l,"\beta_2"']\arrow[ld,red]\arrow[rd,red]\arrow[r,"\beta_2'"]&\circ\arrow[rd,"V^{\pi_2}_{\gamma_4}"]&\\
		\circ\arrow[rd,"V^{\pi_2}_{\gamma_2}"']&{\color{red}\bullet}\arrow[l,phantom,"q_2"]\arrow[rd,red]&\check{R}_1(z)&{\color{red}\bullet}\arrow[r,phantom,"q_2^{-1}"]\arrow[ld,red]&\circ\arrow[ld,"V^{\pi_2}_{\gamma_5}"]\\
		&\circ\arrow[d,"V^{\pi_2}_{\gamma_3}"']&{\color{red}\bullet}\arrow[d,red]&\circ\arrow[d,"V^{\pi_2}_{\gamma_6}"]&\\
		&\circ&{\color{red}\bullet}\arrow[l,red,"\beta''"]\arrow[r,red,"\beta''"']&\circ&\\
	\end{tikzcd}
	=\delta_{\beta_2,\beta'_2}
	\begin{tikzcd}[scale cd=0.8]
		\bullet\arrow[r,"\beta_2"]&\circ\arrow[ld,"V^{\pi_2}_{\gamma_1}"']\arrow[rd,"V^{\pi_2}_{\gamma_4}"]&\bullet\arrow[l,"\beta_2"']\\
		\circ\arrow[rd,"V^{\pi_2}_{\gamma_2}"']&\check{R}_2(z)&\circ\arrow[ld,"V^{\pi_2}_{\gamma_5}"]\\
		&\circ\arrow[d,"V^{\pi_2}_{\gamma_3}"]&\\
		&\circ&\\
	\end{tikzcd}
\end{equation}
\begin{equation}\label{eq:graph_uqtwist12_2}
	\sum_{\color{red}\rm{red}}
	\begin{tikzcd}[scale cd=0.8]
		&\circ\arrow[ld,"V^{\pi_2}_{\gamma_1}"']&\bullet\arrow[l,"\beta_2"']\arrow[ld,red]\arrow[rd,red]\arrow[r,"\beta_2"]&\circ\arrow[rd,"V^{\pi_2}_{\gamma_4}"]&\\
		\circ\arrow[rd,"V^{\pi_2}_{\gamma_2}"']&{\color{red}\bullet}\arrow[l,phantom,"q_2"]\arrow[rd,red]&\check{R}_1(z)&{\color{red}\bullet}\arrow[r,phantom,"q_2^{-1}"]\arrow[ld,red]&\circ\arrow[ld,"V^{\pi_2}_{\gamma_5}"]\\
		&\circ\arrow[d,"V^{\pi_2}_{\gamma_3}"']&{\color{red}\bullet}\arrow[d,red]&\circ\arrow[d,"V^{\pi_2}_{\gamma_6}"]&\\
		&\circ&{\color{red}\bullet}\arrow[l,red,"\beta''"]\arrow[r,red,"\beta''"']&\circ&\\
	\end{tikzcd}
	=\sum_{\color{red}\rm{red}}\begin{tikzcd}[scale cd=0.8]
		&\circ\arrow[ld,"V^{\pi_2}_{\gamma_1}"']&\bullet\arrow[l,"\beta_3"']\arrow[ld,red]\arrow[rd,red]\arrow[r,"\beta_3"]&\circ\arrow[rd,"V^{\pi_2}_{\gamma_4}"]&\\
		\circ\arrow[rd,"V^{\pi_2}_{\gamma_2}"']&{\color{red}\bullet}\arrow[l,phantom,"q_2"]\arrow[rd,red]&\check{R}_1(z)&{\color{red}\bullet}\arrow[r,phantom,"q_2^{-1}"]\arrow[ld,red]&\circ\arrow[ld,"V^{\pi_2}_{\gamma_5}"]\\
		&\circ\arrow[d,"V^{\pi_2}_{\gamma_3}"']&{\color{red}\bullet}\arrow[d,red]&\circ\arrow[d,"V^{\pi_2}_{\gamma_6}"]&\\
		&\circ&{\color{red}\bullet}\arrow[l,red,"\beta''"]\arrow[r,red,"\beta''"']&\circ&\\
	\end{tikzcd}
\end{equation}
\end{subequations}

And there exists a right invertible map $q\in \Gamma(\pi\times \pi,\overline{\operatorname{Hom}}((V^{\pi_2})^{\otimes n}, (V^{\pi_1})^{\otimes n} ))$ satisfies the following conditions
\begin{subequations}
\begin{equation}\label{eq:qutwist_q23}
\operatorname{id}^{\otimes n-2}_{V^{\pi_2}}\otimes (\check{R}_2)_{\lhd}|_{(V^{\pi_2})^{\otimes n}_{\gamma\gamma'}}=(q^{-1}*_{\circ}\big((\operatorname{id}^{\otimes n-2}_{V^{\pi_1}}\otimes \check{R}_1)q\big))_{\lhd}
\end{equation}
\begin{equation}\label{eq:qutwist_q12}
\id^{\otimes n-3}_{V^{\pi_2}}\otimes (\check{R}_2)_{\lhd}\otimes \operatorname{id}_{V^{\pi_2}}|_{(V^{\pi_2})^{\otimes n}_{\gamma\gamma'}}=(q^{-1}*_{\circ}\big((\id^{\otimes n-3}_{V^{\pi_1}}\otimes \check{R}_1\otimes \operatorname{id}_{V^{\pi_1}})q\big))_{\lhd}
\end{equation}
\end{subequations}
and graphically the equation \eqref{eq:qutwist_q23} is the following \eqref{eq:graph_qutwist_q23} and similarly for \eqref{eq:qutwist_q12}.
\begin{equation}\label{eq:graph_qutwist_q23}
	\sum_{\color{red}\rm{red}}
	\begin{tikzcd}
    	&\circ\arrow[d,"V^{\pi_2}_{\gamma'}"]&\bullet\arrow[l,"\beta_1"']\arrow[d,red]\arrow[r,"\beta_1"]&\circ\arrow[d,"V^{\pi_2}_{\gamma'}"]&\\
		&\circ\arrow[d,"V^{\pi_2}_{\gamma_1}"]&{\color{red}\bullet}\arrow[d,red]&\circ\arrow[d,"V^{\pi_2}_{\gamma_1}"]&\\
		&\circ\arrow[ld,"V^{\pi_2}_{\gamma_2}"']&{\color{red}\bullet}\arrow[ld,red]\arrow[rd,red]&\circ\arrow[rd,"V^{\pi_2}_{\gamma_5}"]&\\
		\circ\arrow[rd,"V^{\pi_2}_{\gamma_3}"']&{\color{red}\bullet}\arrow[l,phantom,"q"]\arrow[rd,red]&\check{R}_1(z)&{\color{red}\bullet}\arrow[r,phantom,"q^{-1}"]\arrow[ld,red]&\circ\arrow[ld,"V^{\pi_2}_{\gamma_6}"]\\
		&\circ&{\color{red}\bullet}\arrow[l,red,"\beta''"]\arrow[r,red,"\beta''"']&\circ&
	\end{tikzcd}
	=
	\begin{tikzcd}
		&\circ\arrow[d,"V^{\pi_2}_{\gamma'}"]&\\
		&\circ\arrow[d,"V^{\pi_2}_{\gamma_1}"]&\\
		\bullet\arrow[r,"\beta"]&\circ\arrow[ld,"V^{\pi_2}_{\gamma_2}"']\arrow[rd,"V^{\pi_2}_{\gamma_5}"]&\bullet\arrow[l,"\beta"']\\
		\circ\arrow[rd,"V^{\pi_2}_{\gamma_3}"']&\check{R}_2(z)&\circ\arrow[ld,"V^{\pi_2}_{\gamma_6}"]\\
		&\circ&
	\end{tikzcd}
\end{equation}

And there exists an invertible map $q_1\in  \Gamma\big(\pi\times \pi,\overline{\operatorname{Hom}}((V^{\pi_2})^{\otimes n-3}, (V^{\pi_1})^{\otimes n-3} )\big)$ such that the factorization property holds \eqref{twist_factorization}.
\begin{equation}\label{twist_factorization}
q|_{(V^{\pi_2})^{\otimes n}_{\gamma\gamma'}}=q_1|_{(V^{\pi_2})^{\otimes n-3}_{\gamma'}}*_{\otimes}q_2|_{(V^{\pi_2})^{\otimes 3}_{\gamma}}
\end{equation}

\begin{prop}
Suppose that the connecting system for $\pi_2$ is quasi-unique and there is no multi-edge in $\pi$, if $j$ is a generalized twist, then $(\check{R}_2)_{\rhd}$ is an Yang-Baxter operator on $V^{\pi_2}$.
\end{prop}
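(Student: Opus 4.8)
The plan is to verify the Yang--Baxter equation for $(\check{R}_2)_{\rhd}$ directly in the transfer-operator algebra $\Psi\big(\pi\times\pi,\operatorname{End}_{\rhd}((V^{\pi_2})^{\otimes 3})\big)$, using the product $*_{\rhd}$, rather than passing to an endomorphism and evaluating on a single diagonal as in the unique-connecting-system proposition. Here $(\check{R}_2)_{\rhd}$ is the $\rhd$-transfer operator $\check{R}_2\in\Psi(\pi\times\pi,\operatorname{End}_{\rhd}(V^{\pi_2}\otimes V^{\pi_2}))$ built from the quasi-unique twist, and the goal is
$$\check{R}_2^{(23)}*_{\rhd}\check{R}_2^{(12)}*_{\rhd}\check{R}_2^{(23)}=\check{R}_2^{(12)}*_{\rhd}\check{R}_2^{(23)}*_{\rhd}\check{R}_2^{(12)}$$
restricted to each graded piece indexed by a length-$3$ path $\gamma\in\pi_2$ (the spectral parameters matching exactly as in \eqref{eq:YBE}); equivalently, since $\rhd$ is an algebra map (Lemma \ref{lemma_endo}), this is the Yang--Baxter equation for the induced operator on $V^{\pi_2}$. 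Throughout I keep the $\rhd$-orientation fixed: the connecting arrows $\beta\in\pi$ sit at the top, the $\pi_2$-paths grow downward from the connecting vertex as in the right-hand diamond of \eqref{eq:graph_twist}, and each $*_{\rhd}$-contraction is taken over the top connecting arrow.

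First I would fix a length-$3$ path $\gamma\in\pi_2$ and restrict to $(V^{\pi_2})^{\otimes 3}_{\gamma}$. By quasi-uniqueness (Definition \ref{def:qunique}) there is a distinguished connecting arrow $\beta_1\in\Sigma_{\pi_2}$ with the local uniqueness \eqref{eq:quasi_unique}, namely $s_{\pi}^{-1}(s_{\pi}(\beta_1))=\beta_1$. Using the factorization property \eqref{twist_factorization}, $q|_{(V^{\pi_2})^{\otimes n}_{\gamma\gamma'}}=q_1*_{\otimes}q_2$, together with Lemma \ref{lem:factorization_invertible} (so that $q^{-1}=q_1^{-1}*_{\otimes}q_2^{-1}$), I would let the spectator legs be absorbed by $q_1$ and reduce the $n$-fold twist equations \eqref{eq:qutwist_q23} and \eqref{eq:qutwist_q12} to the length-$3$ ice-rule conditions \eqref{eq:quntwist23} and \eqref{eq:quntwist12} governed by $q_2$. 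These realize each braid factor $\check{R}_2^{(ij)}$ (restricted to $\gamma$) as a $q_2$-conjugate of the corresponding $\check{R}_1^{(ij)}$, schematically $\check{R}_2^{(ij)}=q_2^{-1}*_{\circ}\big(\check{R}_1^{(ij)}q_2\big)$, with the weight-zero relations \eqref{eq:graph_uqtwist} recording (via the $\delta_{\beta_2,\beta_2'}$ in \eqref{eq:ice_rule}) that the supporting connecting arrows agree.

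Next I would substitute these conjugated expressions into both sides of the displayed identity and carry out the two $*_{\rhd}$-contractions, summing over the intermediate connecting arrows $\beta,\beta'$. In each contraction an inner $q_2$ of one factor meets the outer $q_2^{-1}$ of the adjacent factor, producing a term $q_2(\beta_1,\beta')\,q_2^{-1}(\beta,\beta_1)$. The main obstacle is to show that the intermediate sum collapses to $\delta_{\beta,\beta'}\operatorname{id}_{q_2^{-1}}(\beta,\beta)$, i.e.\ that the $q_2$'s cancel. In the unique case this was immediate from global uniqueness; here I would instead combine the local uniqueness \eqref{eq:quasi_unique} at $s_{\pi}(\beta_1)$ with the \emph{no-multi-edge} hypothesis, which forces every arrow of $\pi$ to be determined by its source and target. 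Matching sources and targets across the $*_{\rhd}$-contraction then pins the intermediate arrow to a single value, so that right-invertibility of $q_2$ (\eqref{eq:transfer_right_invert}) yields exactly $q_2*_{\circ}q_2^{-1}=\delta\cdot\operatorname{id}$; the weight-zero form \eqref{eq:graph_uqtwist} guarantees that the surviving off-diagonal terms remain consistent under the collapse, which is precisely the point a naive $\lhd$-evaluation would discard.

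After both cancellations, the two sides reduce to $q_2^{-1}*_{\circ}\big((\check{R}_1^{(23)}\check{R}_1^{(12)}\check{R}_1^{(23)})q_2\big)$ and $q_2^{-1}*_{\circ}\big((\check{R}_1^{(12)}\check{R}_1^{(23)}\check{R}_1^{(12)})q_2\big)$ respectively. Since $\check{R}_1$ is a Yang--Baxter operator on $V^{\pi_1}$ it obeys \eqref{eq:YBE}, so the two inner triple products of $\check{R}_1$ coincide and hence so do their $q_2$-conjugates, establishing the Yang--Baxter equation for $(\check{R}_2)_{\rhd}$ on the piece $\gamma$. Finally I would globalize: the compatibility $\beta_{\gamma\circ\gamma'}=\beta_{\gamma'}$ of the connecting system together with the factorization \eqref{twist_factorization} ensures that the per-$\gamma$ identities assemble consistently over overlapping length-$3$ windows of a longer path into a single Yang--Baxter equation for $(\check{R}_2)_{\rhd}$ on all of $V^{\pi_2}$, independent of the auxiliary choices of $q,q_1,q_2$. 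The delicate point, which I expect to absorb most of the work, remains the intermediate cancellation above, where quasi-uniqueness and the no-multi-edge condition must be used in tandem while respecting the $\rhd$-orientation.
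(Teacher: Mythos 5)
Your proposal is correct and follows essentially the same route as the paper's proof: both realize the factors of the Yang--Baxter products as $q$-conjugates of the corresponding $\check{R}_1$ factors via the factorization $q=q_1*_{\otimes}q_2$ and the ice-rule conditions, both hinge on the same intermediate cancellation $q(\beta_1,\beta)q^{-1}(\beta',\beta_1)=\delta_{\beta,\beta'}\operatorname{id}_{q^{-1}}$ coming from quasi-uniqueness, the no-multi-edge hypothesis and right invertibility, and both then conclude from the Yang--Baxter equation for $\check{R}_1$. The only differences are presentational (you phrase the computation in the $*_{\rhd}$ transfer-operator algebra and add an explicit per-$\gamma$ globalization step, while the paper computes directly with $(\check{R}_2)_{\lhd}$ at a fixed connecting arrow $\beta_1$), so there is nothing substantive to flag.
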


\begin{proof}
From the factorization condition \eqref{twist_factorization} and the lemma \ref{lem:factorization_invertible}, we have the relation that
\begin{align*}
&\big(q^{-1}*_{\circ}(\operatorname{id}^{\otimes n-2}_{V^{\pi_1}}\otimes \check{R}_1)q\big)(\beta_1,\beta_1)\\
&=(q^{-1}_1*_{\otimes}q^{-1}_2)*_{\circ}\big((\operatorname{id}^{\otimes n-2}_{V^{\pi_1}}\otimes \check{R}_1)q_1*_{\otimes}q_2\big)(\beta_1,\beta_1)\\
&=\sum_{\beta}q^{-1}_1*_{\otimes}q^{-1}_2(\beta,\beta_1)(\operatorname{id}^{\otimes n-2}_{V^{\pi_1}}\otimes \check{R}_1)q_1*_{\otimes}q_2(\beta_1,\beta)\\
&=\sum_{\beta,\beta_2,\beta'_2}\big(q^{-1}_1(\beta'_2,\beta_1)\otimes q^{-1}_2(\beta,\beta'_2)\big)(\operatorname{id}^{\otimes n-2}_{V^{\pi_1}}\otimes \check{R}_1)(q_1(\beta_1,\beta_2)\otimes q_2(\beta_2,\beta))
\end{align*}

Then from the condition \eqref{eq:quntwist12} and the \eqref{eq:quntwist23}, we know that
\begin{equation}
\big(q^{-1}*_{\circ}(\operatorname{id}^{\otimes n-2}_{V^{\pi_1}}\otimes \check{R}_1)q\big)(\beta_1,\beta_1)=\id_{q_1}(\beta_1,\beta_1)\otimes \id_{V^{\pi_2}}\otimes (\check{R_2})_{\lhd}
\end{equation}
and similarly we get the equation
\begin{equation}
q^{-1}*_{\circ}\big((\id^{\otimes n-3}_{V^{\pi_1}}\otimes \check{R}_1\otimes \operatorname{id}^{\otimes n-2}_{V^{\pi_1}})q\big)(\beta_1,\beta_1)=\id_{q_1}(\beta_1,\beta_1)\otimes (\check{R_2})_{\lhd}\otimes \id_{V^{\pi_2}}
\end{equation}

Then we multiply these two operators
\begin{align*}
&\big(q^{-1}*_{\circ}(\operatorname{id}^{\otimes n-2}_{V^{\pi_1}}\otimes \check{R}_1)q\big)(\beta_1,\beta_1)\big(q^{-1}*_{\circ}\big((\id^{\otimes n-3}_{V^{\pi_1}}\otimes \check{R}_1\otimes \operatorname{id}^{\otimes n-2}_{V^{\pi_1}})q\big)(\beta_1,\beta_1)\\
&=\sum_{\beta,\beta'}\big(q^{-1}(\beta,\beta_1)(\operatorname{id}^{\otimes n-2}_{V^{\pi_1}}\otimes \check{R}_1)q(\beta_1,\beta)\big)\big(q^{-1}(\beta',\beta_1)(\id^{\otimes n-3}_{V^{\pi_1}}\otimes \check{R}_1\otimes \operatorname{id}^{\otimes n-2}_{V^{\pi_1}})q(\beta_1,\beta')\big)
\end{align*}
And then from the property \eqref{eq:quasi_unique} and the right invertibility of $q^{-1}_1$, we have that
\begin{equation*}
q(\beta_1,\beta)q^{-1}(\beta',\beta_1)=\delta_{\beta',\beta}\id_{q^{-1}}
\end{equation*}
we get the relation
\begin{equation}\label{eq:qutwist_proof_1}
\begin{split}
&\big(q^{-1}*_{\circ}(\operatorname{id}^{\otimes n-2}_{V^{\pi_1}}\otimes \check{R}_1)q\big)(\beta_1,\beta_1)\big(q^{-1}*_{\circ}\big((\id^{\otimes n-3}_{V^{\pi_1}}\otimes \check{R}_1\otimes \operatorname{id}^{\otimes n-2}_{V^{\pi_1}})\big)q\big)(\beta_1,\beta_1)\\
&=\big(q^{-1}*_{\circ}\big(\operatorname{id}^{\otimes n-3}_{V^{\pi_1}}\otimes (\id_{V^{\pi_1}}\otimes \check{R}_1)(\check{R}_1\otimes \operatorname{id}_{V^{\pi_1}})q\big)(\beta_1,\beta_1)\\
&=\id_{q_1}(\beta_1,\beta_1)\otimes( \id_{V^{\pi_2}}\otimes (\check{R_2})_{\lhd})((\check{R_2})_{\lhd}\otimes \id_{V^{\pi_2}})
\end{split}
\end{equation}

And then from the equation \eqref{eq:qutwist_proof_1} and the fact that $\check{R_1}$ is an Yang-Baxter operator, we know that $(\check{R}_2)_{\lhd}$ satisfies the Yang-Baxter equation
\begin{equation}
\begin{split}
&(\id_{V^{\pi_2}}\otimes (\check{R_2})_{\lhd})((\check{R_2})_{\lhd}\otimes \id_{V^{\pi_2}})(\id_{V^{\pi_2}}\otimes (\check{R_2})_{\lhd})\\
&=((\check{R_2})_{\lhd}\otimes \id_{V^{\pi_2}})(\id_{V^{\pi_2}}\otimes (\check{R_2})_{\lhd})((\check{R_2})_{\lhd}\otimes \id_{V^{\pi_2}})
\end{split}
\end{equation}
\end{proof}

\subsection{Intertwiner from twists}\label{sub:intertwiner_from_twist}
Suppose now that we have a \emph{quasi-unique connecting system} for $\pi_2$ and an invertible $\widetilde{C}\in \Gamma\big(\pi\times \pi,\overline{\operatorname{Hom}}(V^{\pi_1},V^{\pi_2})\big)$ with its inverse $\widetilde{C}^{-1}\in \Gamma\big(\pi\times \pi,\overline{\operatorname{Hom}}(V^{\pi_2},V^{\pi_1})\big)$.

Let $j=\widetilde{C}^{-1}*_{\otimes}\widetilde{C}^{-1}\in \Gamma(\pi\times \pi,\overline{\operatorname{Hom}}(V^{\pi_2}\otimes V^{\pi_2},V^{\pi_1}\otimes V^{\pi_1}))$, from the lemma \ref{lem:factorization_invertible}, it has inverse with $j^{-1}=\widetilde{C}*_{\otimes}\widetilde{C}$. With the map $j$ and a given quasi-unique connecting system, we can define $\check{R}_2\in \Psi\big(\pi\times\pi,\operatorname{End}_{\rhd}(V^{\pi_2}\otimes V^{\pi_2})\big)$ as before
\begin{equation}\label{cell_twist_R_2}
	\check{R}_2(\beta_2,\beta_1)=j^{-1}*_{\circ}(\check{R}_1j):=\sum_{\beta}j^{-1}(\beta,\beta_2)\check{R}_1j(\beta_1,\beta)
\end{equation}

\begin{definition}
	$\widetilde{C}$ is called a cell-twist if for any $\gamma\in \pi_2$, we have the following relations 
	\begin{subequations}\label{eq:cell_twist}
		\begin{equation}
			\sum_{\beta}j^{-1}(\beta,\beta_1)\check{R}_1j(\beta_1,\beta)=\sum_{\beta'}j^{-1}(\beta',\beta'_1)\check{R}_1j(\beta'_1,\beta')
		\end{equation}
		\begin{equation}
			\sum_{\beta}j^{-1}(\beta,\beta_2)\check{R}_1j(\beta_1,\beta)=0
		\end{equation}
	\end{subequations}
	for any $\beta_1$,$\beta'_1$ with $t_{\pi}(\beta'_1)=t_{\pi}(\beta_1)=s_{\pi_2}(\gamma)$ and for any $\beta_2\ne \beta_1$ with $s_{\pi}(\beta_2)=s_{\pi}(\beta_1)$.
\end{definition}

\begin{prop}
	Suppose that $\widetilde{C}$ is a cell-twist, then $j$ is a twist for the chosen quasi-unique connecting system.
\end{prop}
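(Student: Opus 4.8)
The plan is to produce the auxiliary maps $q_2$, $q$, $q_1$ demanded by the definition of twist in Subsection~\ref{sub:quasi_unique} as fusion powers of $\widetilde{C}^{-1}$, and then to read off each defining condition from the two cell-twist identities \eqref{eq:cell_twist}. Concretely I would set
\begin{equation*}
q:=\widetilde{C}^{-1}*_{\otimes}\cdots*_{\otimes}\widetilde{C}^{-1}\ (n\text{ factors}),\quad q_2:=\widetilde{C}^{-1}*_{\otimes}\widetilde{C}^{-1}*_{\otimes}\widetilde{C}^{-1},\quad q_1:=\widetilde{C}^{-1}*_{\otimes}\cdots*_{\otimes}\widetilde{C}^{-1}\ (n-3\text{ factors}).
\end{equation*}
Because $\widetilde{C}$ is invertible with inverse $\widetilde{C}^{-1}$, iterating Lemma~\ref{lem:factorization_invertible} shows that $q$, $q_1$, $q_2$ are all invertible (in particular right invertible), with inverses the corresponding fusion powers of $\widetilde{C}$; and the factorization \eqref{twist_factorization} is immediate from the associativity of $*_{\otimes}$, since $q|_{(V^{\pi_2})^{\otimes n}_{\gamma\gamma'}}=q_1*_{\otimes}q_2$ by construction.

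The heart of the argument is the ice-rule condition \eqref{eq:ice_rule} for $q_2$. The point is that associativity of $*_{\otimes}$ lets the single map $q_2$ factor simultaneously as $q_2=j*_{\otimes}\widetilde{C}^{-1}$ and as $q_2=\widetilde{C}^{-1}*_{\otimes}j$, where $j=\widetilde{C}^{-1}*_{\otimes}\widetilde{C}^{-1}$ and $j^{-1}=\widetilde{C}*_{\otimes}\widetilde{C}$. To verify \eqref{eq:quntwist12}, in which $\check{R}_1\otimes\operatorname{id}$ acts only on the first two tensor slots, I substitute $q_2=j*_{\otimes}\widetilde{C}^{-1}$, $q_2^{-1}=j^{-1}*_{\otimes}\widetilde{C}$ and expand $\bigl(q_2^{-1}*_{\circ}((\check{R}_1\otimes\operatorname{id})q_2)\bigr)(\beta_2,\beta'_2)$. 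The passive third slot then contributes a factor $\widetilde{C}*_{\circ}\widetilde{C}^{-1}$, which collapses to the identity by invertibility of $\widetilde{C}$, while the first two slots reproduce exactly $\sum_{\beta}j^{-1}(\beta,\beta'_2)\check{R}_1 j(\beta_2,\beta)$, i.e. a component of $\check{R}_2$. Using the equivalent graphical form \eqref{eq:graph_uqtwist12}--\eqref{eq:graph_uqtwist12_2} of \eqref{eq:quntwist12}, the second identity of \eqref{eq:cell_twist} makes this component vanish whenever $\beta_2\neq\beta'_2$, producing the factor $\delta_{\beta_2,\beta'_2}$, and the first identity of \eqref{eq:cell_twist} (path-independence of the diagonal, guaranteed by the uniqueness \eqref{eq:quasi_unique} of the connecting edge) identifies the surviving diagonal value with the well-defined operator $(\check{R}_2)_{\lhd}$; this is precisely \eqref{eq:quntwist12}, and the symmetric substitution $q_2=\widetilde{C}^{-1}*_{\otimes}j$ gives \eqref{eq:quntwist23}.

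It remains to deduce the global conditions \eqref{eq:qutwist_q23} and \eqref{eq:qutwist_q12}, and here the factorization does the work. Writing $q=q_1*_{\otimes}q_2$ and $q^{-1}=q_1^{-1}*_{\otimes}q_2^{-1}$ in $q^{-1}*_{\circ}\bigl((\operatorname{id}^{\otimes n-2}_{V^{\pi_1}}\otimes\check{R}_1)q\bigr)$, the operator $\check{R}_1$ sits entirely inside the $q_2$-block (the last three slots), so the $q_1$-block contracts to the identity by right invertibility of $q_1$ (Lemma~\ref{lem:factorization_invertible}), while the $q_2$-block reproduces the ice-rule value $\operatorname{id}_{V^{\pi_2}}\otimes(\check{R}_2)_{\lhd}$ just established. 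Reassembling gives $\operatorname{id}^{\otimes n-2}_{V^{\pi_2}}\otimes(\check{R}_2)_{\lhd}$, which is \eqref{eq:qutwist_q23}, and the same computation with $\check{R}_1$ placed on slots $n-2,n-1$ gives \eqref{eq:qutwist_q12}.

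I expect the main obstacle to be the careful bookkeeping in the interaction of the two products $*_{\otimes}$ (vertical fusion of tensor slots) and $*_{\circ}$ (composition along the transfer grading): one must check that the intermediate summation index of $*_{\circ}$ distributes correctly across the factorized blocks, so that the passive slots contract to the identity exactly when the active slots assemble into $\check{R}_2$, and that the $\delta_{\beta_2,\beta'_2}$ coming from the second identity of \eqref{eq:cell_twist} is consistent with the uniqueness \eqref{eq:quasi_unique} of the connecting edge $\beta_1$ fixed by the quasi-unique connecting system. Everything else is a formal consequence of associativity of $*_{\otimes}$ and Lemma~\ref{lem:factorization_invertible}.
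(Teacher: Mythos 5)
Your proposal is correct and follows essentially the same route as the paper: the paper's proof likewise takes $q_1$ to be the $(n-3)$-fold $*_{\otimes}$ power of $\widetilde{C}^{-1}$ and $q_2=\widetilde{C}^{-1}*_{\otimes}\widetilde{C}^{-1}*_{\otimes}\widetilde{C}^{-1}$, and derives \eqref{eq:quntwist12}, \eqref{eq:quntwist23}, \eqref{eq:qutwist_q12} and \eqref{eq:qutwist_q23} from \eqref{eq:cell_twist} together with the invertibility of $\widetilde{C}$. Your write-up simply makes explicit the block-contraction bookkeeping that the paper leaves implicit.
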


\begin{proof}
	With the given quasi-connecting system, the corresponding $q_1$ is $(n-3)$ times $*_{\otimes}$ of $\widetilde{C}^{-1}$ and $q_2=\widetilde{C}^{-1}*_{\otimes}\widetilde{C}^{-1}*_{\otimes}\widetilde{C}^{-1}$ as in the definition of quasi-unique twist and the relation \eqref{eq:quntwist12},\eqref{eq:quntwist23}, \eqref{eq:qutwist_q12} and \eqref{eq:qutwist_q23} follows from \eqref{eq:cell_twist} and the invertibility of $\widetilde{C}$.
\end{proof}

The simplest cell twist is the following gauge transform.

\begin{definition}[Gauge transform]
For each ($\pi_1,V^{\pi_1},\check{R}_1)$, we can assume that $\pi_1=\pi_2,V^{\pi_2}=V^{\pi_2}$ and $\pi$ is the arrows which connect the same vertices. 

We now define the following map $\widetilde{C}^{-1}\in \Gamma(\pi\times \pi,\overline{\operatorname{Hom}}(V^{\pi_2},V^{\pi_1}))$, for each of the component \eqref{eq:gauge_cell},
\begin{equation}\label{eq:gauge_cell}
	\begin{tikzcd}
		\circ_i\arrow[d,"V^{\pi_2}_{\alpha}"']&\bullet_i\arrow[l,"\beta"']\arrow[d,"V^{\pi_1}_\alpha"]\\
		\circ_j&\arrow[l,"\beta"]\bullet_j
	\end{tikzcd}
\end{equation}
let $c_{\alpha,\beta}$ to be nonzero complex number, then $\widetilde{C}^{-1}$ is defined to be
\begin{equation}
\begin{split}
\widetilde{C}^{-1}(\beta,\beta):V^{\pi_2}_{\alpha}&\to V_{\alpha}^{\pi_1}: v \mapsto c_{\alpha,\beta}v 
\end{split}
\end{equation}

$\widetilde{C}^{-1}$ is easily seen to be invertible with the inverse defined by the inverse of the complex numbers $c_{\alpha,\beta}$. And the condition \eqref{eq:cell_system_twist} are easily verified, because in this case, there is only one unique connecting edge for each vertex. So $j=\widetilde{C}^{-1}*_{\otimes}\widetilde{C}^{-1}$ is a twist, and the $(\check{R}_2)_{\lhd}$ defined by \eqref{cell_twist_R_2} is called the gauge transform of $\check{R}_1$.
\end{definition}

Now we can formulate the relation with the intertwiner.

\begin{prop}
For each $\beta\in \pi$, we can assign a one dimensional vector space, then it forms a vector space $V^{\pi}$, let $\widetilde{C}$ be a cell-twist, then it gives rise to an intertwiner $C\in \operatorname{Hom}(V^{\pi_1}\otimes_{*\pi_1}V^{\pi}, V^{\pi}\otimes_{*\pi_2} V^{\pi_2})$. 
\end{prop}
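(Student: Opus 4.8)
The plan is to produce $C$ as the unique homomorphism whose matrix element with respect to $V^\pi$ recovers $\widetilde{C}$, and then to verify the $RCC$ relation \eqref{eq:RCC} by passing to matrix elements, where it becomes a combinatorial identity governed by the two cell-twist conditions \eqref{eq:cell_twist}. First I would construct $C$: since each $V^\pi_\beta$ is one dimensional with distinguished generator $v_\beta$, the space $V^\pi$ is of Verma type, and a homomorphism $J\in\Hom(V^{\pi_1}\otimes V^\pi,V^\pi\otimes V^{\pi_2})$ is completely determined by its matrix element $\operatorname{Mat}_{V^\pi}J$. I define $C$ by $C(w\otimes v_{\beta_1})=\sum_{\beta_2}v_{\beta_2}\otimes\widetilde{C}(\beta_1,\beta_2)(w)$ for $w\in V^{\pi_1}_\alpha$; the grading of $\widetilde{C}(\beta_1,\beta_2)$ makes this a well-defined map of bimodule-graded spaces with $\operatorname{Mat}_{V^\pi}C=\widetilde{C}$. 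Because the $V^\pi$ strand is one dimensional, the two sides of \eqref{eq:RCC} agree as homomorphisms $V^{\pi_1}\otimes V^{\pi_1}\otimes V^\pi\to V^\pi\otimes V^{\pi_2}\otimes V^{\pi_2}$ if and only if their matrix elements at every pair $(\beta_1,\beta_2)$ coincide, so it suffices to check this.

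Next I would compute the two matrix elements. Inserting the completeness relation $\sum_\beta v_\beta v^*_\beta=\operatorname{id}$ on the intermediate $V^\pi$ strand shows $\operatorname{Mat}_{V^\pi}(C^{(12)}C^{(23)})=\widetilde{C}*_{\otimes}\widetilde{C}=j^{-1}$, matching the fusion product \eqref{eq:fusion_transfer12} and the definition of $j^{-1}$. Since $\check{R}_1^{(12)}=\check{R}_1\otimes\operatorname{id}_{V^\pi}$ does not touch the $V^\pi$ strand, the left side of \eqref{eq:RCC} has matrix element $j^{-1}(\beta_1,\beta_2)\circ\check{R}_1$, while $\check{R}_2^{(23)}=\operatorname{id}_{V^\pi}\otimes(\check{R}_2)_\lhd$ acts only on the outgoing $V^{\pi_2}$ factors, so the right side has matrix element $(\check{R}_2)_\lhd\circ j^{-1}(\beta_1,\beta_2)$. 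The $\pi_2$-grading of $j^{-1}(\beta_1,\beta_2)$ is a length-two arrow $\delta$ with $s_{\pi_2}(\delta)=t_\pi(\beta_2)$, and by the well-definedness condition (the first equation of \eqref{eq:cell_twist}) together with Lemma \ref{lemma_endo}, $(\check{R}_2)_\lhd$ acts on this graded piece as $\check{R}_2(\beta_2,\beta_2)$. Thus the identity to be proved is $j^{-1}(\beta_1,\beta_2)\check{R}_1=\check{R}_2(\beta_2,\beta_2)\,j^{-1}(\beta_1,\beta_2)$.

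Then I would bridge the two sides using invertibility of $j$. Starting from $\check{R}_2(\beta_2,\beta_1')=\sum_\beta j^{-1}(\beta,\beta_2)\check{R}_1 j(\beta_1',\beta)$ of \eqref{cell_twist_R_2}, right-composing with $j^{-1}(\beta_1,\beta_1')$ and summing over $\beta_1'$, the telescoping relation \eqref{eq:transfer_right_invert} collapses $\sum_{\beta_1'}j(\beta_1',\beta)j^{-1}(\beta_1,\beta_1')=\delta_{\beta_1,\beta}\operatorname{id}$, yielding $\sum_{\beta_1'}\check{R}_2(\beta_2,\beta_1')j^{-1}(\beta_1,\beta_1')=j^{-1}(\beta_1,\beta_2)\check{R}_1$. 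This exhibits the left-hand matrix element as a sum of right-hand-side pieces. Finally the off-diagonal vanishing condition (the second equation of \eqref{eq:cell_twist}) forces $\check{R}_2(\beta_2,\beta_1')=0$ for every summand with $\beta_1'\neq\beta_2$ sharing the source of $\beta_2$, collapsing the sum to the single term $\check{R}_2(\beta_2,\beta_2)j^{-1}(\beta_1,\beta_2)$. This is exactly the right-hand matrix element, so \eqref{eq:RCC} holds and $C$ is an $(\check{R}_1,\check{R}_2)$ intertwiner.

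The main obstacle I anticipate is the index bookkeeping in the last step: I must verify that the composability constraints between $\check{R}_2(\beta_2,\beta_1')$ and $j^{-1}(\beta_1,\beta_1')$ restrict the summation to arrows $\beta_1'$ with $s_\pi(\beta_1')=s_\pi(\beta_2)$, which is precisely the regime where the off-diagonal condition of \eqref{eq:cell_twist} applies; here the absence of multi-edges in $\pi$ is what guarantees that each $\beta_1'$ is pinned down by its source and target, and that the identification $(\check{R}_2)_\lhd|_\delta=\check{R}_2(\beta_2,\beta_2)$ through the chosen connecting system is unambiguous. The remaining checks — that $C$ respects the bimodule grading and that the completeness insertion on the one dimensional strand is legitimate — are routine.
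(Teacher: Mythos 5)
Your proposal is correct and follows essentially the same route as the paper: the paper's (very terse) proof observes that for one-dimensional $V^\pi_\beta$ the cell-twist conditions \eqref{eq:cell_twist} coincide with the weight-zero relation \eqref{eq:graph_further_relation}, and that \eqref{eq:further_relation} is equivalent to the RCC relation by invertibility — which is precisely what your matrix-element computation unpacks, with the right-invertibility telescoping and the off-diagonal vanishing doing the same work as the paper's appeal to \eqref{eq:graph_further_relation}.
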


\begin{proof}
In the case of one dimension for each edge $\beta$, the relation \eqref{eq:graph_further_relation} and the relation \eqref{eq:cell_twist} are the same. And in this case also the RCC relation \eqref{eq:RCC} is equivalent to the relation \eqref{eq:further_relation}.
\end{proof}

\section{Example: Drinfeld twist}\label{sec:Drinfeld_twist}
We first recall the theory of twist for quasi-triangular Hopf algebra by Drinfeld in \cite{Drinfeld1987,Drinfeld1990a, Drinfeld1991}, see also the section 8.10 of \cite{Etingof2015}. Let $(H,m,\Delta,\epsilon,S)$ be a Hopf algebra, the iterated coproduct for $i\ge 1$ is
\begin{equation*}
\Delta_i:H^{\otimes n}\to H^{\otimes n+1},\quad \Delta_i=\operatorname{id}\otimes \dots\otimes \Delta\otimes \dots\operatorname{id}
\end{equation*}
with the convention $\Delta_0=1\otimes (-)$ and $\Delta_{n+1}=(-)\otimes 1$, an $n$ cochain $\chi$ is an invertible element of $H^{\otimes n}$ and we define its coboundary as the $n+1$ cochain
\begin{align*}
	\partial \chi:=(\prod_{i=0}^{i~\text{odd}}\Delta_i \chi)(\prod_{i=0}^{i~\text{even}}\Delta_i \chi^{-1})
\end{align*}
the cochain $\chi$ is an cocycle if $\partial \chi=1$,
An $n$ cocycle for a Hopf algebra or bialgebra is an invertible element $\chi$ in $H^{\otimes n}$ such that $\partial\chi=1$, finally a cochain or cocycle is counital if $\epsilon_i\chi=1$.

\begin{definition}
Let $J\in H\otimes H$ be an invertible element, then $J$ is called a Drinfeld twist if it is a 2 cocycle
\begin{equation}
(\Delta\otimes \operatorname{id})(J)(J\otimes 1)=(\operatorname{id}\otimes \Delta)(J)(1\otimes J)
\end{equation}
or equivalently $J^{12,3}J^{1,2}=J^{1,23}J^{2,3}$.
\end{definition}

\begin{prop}[\cite{Etingof2015},Exercise 5.14.2.]
Let $\Delta_J:H\to H\otimes H$ be given by
\[
\Delta_J(x):=J^{-1}\Delta(x)J
\]
then there exist $S_J:H\to H$ such that $(H,m,\Delta_J,\epsilon,S_J)$ is a Hopf algebra and is called the twist of $H$ by $J$, denoted by $H^J$.
\end{prop}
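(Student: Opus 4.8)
The plan is to verify the Hopf-algebra axioms for $(H,m,\Delta_J,\epsilon,S_J)$ one by one, with coassociativity being the structural heart (where the $2$-cocycle hypothesis enters essentially) and the construction of the antipode being the delicate point. First, since $\Delta\colon H\to H\otimes H$ is an algebra homomorphism and conjugation by the invertible element $J$ is an algebra automorphism of $H\otimes H$, the composite $\Delta_J=\operatorname{Ad}_{J^{-1}}\circ\Delta$ is again an algebra homomorphism, so $(H,m,\Delta_J,\epsilon)$ has a multiplicative comultiplication. For counitality I would use that $J$ is counital, $(\epsilon\otimes\id)(J)=(\id\otimes\epsilon)(J)=1$; if one starts from the cocycle identity alone, applying $\id\otimes\epsilon\otimes\id$ to it and cancelling $J$ by invertibility forces $(\epsilon\otimes\id)(J)\otimes 1=1\otimes(\id\otimes\epsilon)(J)$, so both equal a common scalar $\lambda$, and rescaling $J\mapsto\lambda^{-1}J$ (which preserves the quadratic cocycle identity) makes $J$ counital. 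Applying the algebra map $\epsilon\otimes\id$ to $\Delta_J(x)=J^{-1}\Delta(x)J$ and using $(\epsilon\otimes\id)(J^{\pm1})=1$ together with the counit axiom for $\Delta$ gives $(\epsilon\otimes\id)\Delta_J(x)=x$, and symmetrically on the other side.

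For coassociativity I set $\Phi_L=(\Delta\otimes\id)(J)(J\otimes 1)$ and $\Phi_R=(\id\otimes\Delta)(J)(1\otimes J)$ in $H^{\otimes 3}$; the defining $2$-cocycle condition is exactly $\Phi_L=\Phi_R$. Since $\Delta_J\otimes\id=\operatorname{Ad}_{(J\otimes 1)^{-1}}\circ(\Delta\otimes\id)$, a short manipulation of $(\Delta_J\otimes\id)(J^{-1}\Delta(x)J)$ gives
\[
(\Delta_J\otimes\id)\Delta_J(x)=\Phi_L^{-1}\,(\Delta\otimes\id)\Delta(x)\,\Phi_L,
\]
and in the same way $(\id\otimes\Delta_J)\Delta_J(x)=\Phi_R^{-1}\,(\id\otimes\Delta)\Delta(x)\,\Phi_R$. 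Coassociativity of $\Delta$ equates the two middle factors, while the cocycle identity equates $\Phi_L$ with $\Phi_R$; hence $(\Delta_J\otimes\id)\Delta_J=(\id\otimes\Delta_J)\Delta_J$. This step is clean and is precisely where the cocycle hypothesis is used.

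The main obstacle is the antipode. Writing $J=\sum f^1\otimes f^2$ and $J^{-1}=\sum g^1\otimes g^2$, I would set $U=\sum f^1 S(f^2)\in H$ and define $S_J(x)=U\,S(x)\,U^{-1}$. Two facts must be established: first, that $U$ is invertible, with explicit inverse $U^{-1}=\sum S(g^1)\,g^2$, which is a computation combining the antipode axiom for $S$, the cocycle identity, and counitality of $J$; second, that $S_J$ obeys the antipode axioms for $\Delta_J$, namely $m(S_J\otimes\id)\Delta_J=\eta\epsilon=m(\id\otimes S_J)\Delta_J$. For the latter I would expand $\Delta_J(x)=\sum J^{-1}(x_{(1)}\otimes x_{(2)})J$ in Sweedler notation and reduce each of the two convolution identities to the ordinary antipode axiom for $S$ after commuting the factors of $J$, $J^{-1}$, and $U^{\pm1}$ past one another; correct bookkeeping of these $J$-factors is the fiddly part and the real content of the proof. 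Once $S_J$ is shown to be a two-sided convolution inverse of $\id$ with respect to $\Delta_J$, uniqueness of the antipode in a bialgebra shows it is the antipode, completing the verification that $(H,m,\Delta_J,\epsilon,S_J)$ is a Hopf algebra.
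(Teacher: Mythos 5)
The paper does not prove this proposition; it is quoted from \cite{Etingof2015} (Exercise 5.14.2) without argument, so there is no internal proof to compare against. Your outline is the standard one and the first three quarters of it are sound: $\Delta_J$ is an algebra map because it is $\operatorname{Ad}_{J^{-1}}\circ\Delta$; counitality of $J$ up to an invertible scalar does follow from applying $\id\otimes\epsilon\otimes\id$ to the cocycle identity and cancelling $J$, and can be normalized away; and the computation $(\Delta_J\otimes\id)\Delta_J(x)=\Phi_L^{-1}\big((\Delta\otimes\id)\Delta(x)\big)\Phi_L$, $(\id\otimes\Delta_J)\Delta_J(x)=\Phi_R^{-1}\big((\id\otimes\Delta)\Delta(x)\big)\Phi_R$ with $\Phi_L=(\Delta\otimes\id)(J)(J\otimes 1)$ and $\Phi_R=(\id\otimes\Delta)(J)(1\otimes J)$ is exactly right, so coassociativity reduces to $\Phi_L=\Phi_R$.

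The gap is in the antipode. Your element $U=\sum f^1S(f^2)=m(\id\otimes S)(J)$ with claimed inverse $\sum S(g^1)g^2=m(S\otimes\id)(J^{-1})$ belongs to the opposite convention $\Delta_J=J\Delta(\cdot)J^{-1}$. For the convention used here, $\Delta_J(x)=J^{-1}\Delta(x)J$, write $\Delta_J(y)=\sum g^1y_{(1)}f^1\otimes g^2y_{(2)}f^2$ and try $S_J(x)=A\,S(x)\,B$. The axiom $m(S_J\otimes\id)\Delta_J=\eta\epsilon$ forces the inner sum $\sum S(g^1)\,B\,g^2$ to collapse to $1$, which happens precisely for $B=Q:=m(S\otimes\id)(J)=\sum S(f^1)f^2$, since then $\sum S(g^1)Qg^2=m(S\otimes\id)(J\cdot J^{-1})=1$; one is then left with $\epsilon(y)\,AQ$, so $A=Q^{-1}$ and $S_J(x)=Q^{-1}S(x)Q$ (equivalently $U S(x)U^{-1}$ with $U=m(\id\otimes S)(J^{-1})$). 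With your $U=m(\id\otimes S)(J)$ the analogous telescoping in the other axiom produces $\sum f^1US(f^2)=m(\id\otimes S)(J\cdot J)$, not $1$, so the verification as you have set it up does not close. The fix is only a $J\leftrightarrow J^{-1}$ swap in the definition of the conjugating element, but as written the antipode step fails.
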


\begin{prop}[\cite{Etingof2015},Proposition 8.3.14.]
	If $(H,R)$ is a quasi-triangular Hopf algebra with twist $J$, then $(H^J,R^J)$, where $R_J=(J^{21})^{-1}RJ$ is also quasi-triangular.
\end{prop}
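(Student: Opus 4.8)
The plan is to verify directly that the pair $(\Delta_J,R_J)$ with $R_J=(J^{21})^{-1}RJ$ satisfies the three axioms of a quasi-triangular structure on the Hopf algebra $H^J$, whose existence (including the twisted antipode $S_J$) is granted by the previous proposition. Writing these axioms as (QT1) $\Delta_J^{op}(a)=R_J\Delta_J(a)R_J^{-1}$ for all $a\in H$, (QT2) $(\Delta_J\otimes\id)(R_J)=R_J^{13}R_J^{23}$, and (QT3) $(\id\otimes\Delta_J)(R_J)=R_J^{13}R_J^{12}$, I would first record the two structural facts that drive the whole computation. The first is that $\Delta_J$ is conjugation of $\Delta$ by $J$, so that $(\Delta_J\otimes\id)(X)=(J^{12})^{-1}(\Delta\otimes\id)(X)J^{12}$ and $(\id\otimes\Delta_J)(X)=(J^{23})^{-1}(\id\otimes\Delta)(X)J^{23}$. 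The second is that each leg-embedding $H\otimes H\to H^{\otimes 3}$ is an algebra homomorphism, so that $R_J$, being a product, embeds factor by factor: $R_J^{13}=(J^{31})^{-1}R^{13}J^{13}$, $R_J^{23}=(J^{32})^{-1}R^{23}J^{23}$, and likewise $R_J^{12}=(J^{21})^{-1}R^{12}J^{12}$ in slots $1,2$.

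Axiom (QT1) is immediate. Flipping the defining relation $\Delta_J(a)=J^{-1}\Delta(a)J$ gives $\Delta_J^{op}(a)=(J^{21})^{-1}\Delta^{op}(a)J^{21}$. Substituting this together with the definition of $R_J$ into $R_J\Delta_J(a)R_J^{-1}$, all the inner $J$'s telescope, leaving $(J^{21})^{-1}R\Delta(a)R^{-1}J^{21}$; the claim then reduces to the quasi-cocommutativity $\Delta^{op}(a)R=R\Delta(a)$ of the original $R$, and I would simply display this short cancellation.

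The heart of the argument is (QT2) and (QT3), where the $2$-cocycle condition $(\Delta\otimes\id)(J)(J\otimes 1)=(\id\otimes\Delta)(J)(1\otimes J)$ enters. For (QT2) I would use the conjugation formula to rewrite the left-hand side as $(J^{12})^{-1}(\Delta\otimes\id)(R_J)J^{12}$, expand $(\Delta\otimes\id)(R_J)$ using that $\Delta\otimes\id$ is an algebra map together with the original hexagon $(\Delta\otimes\id)(R)=R^{13}R^{23}$, and reduce the goal to an identity in $H^{\otimes 3}$ among embedded copies of $R$ and $J$. That identity is then closed by interleaving three ingredients: the cocycle condition, used to trade $(\Delta\otimes\id)(J)$ for $(\id\otimes\Delta)(J)$ and the outer factors $J^{12},J^{23}$; the quasi-cocommutativity $R\Delta(x)=\Delta^{op}(x)R$ applied to the legs of $J$, used to slide the embedded $R$-factors past the coproduct images $(\id\otimes\Delta)(J)$ (indeed $R^{23}(\id\otimes\Delta)(J)=(\id\otimes\Delta^{op})(J)R^{23}$); and the co-opposite cocycle relation for $J^{21}$, obtained by applying the slot-reversing algebra automorphism $a\otimes b\otimes c\mapsto c\otimes b\otimes a$ to the cocycle condition, which absorbs the outer $(\Delta\otimes\id)(J^{21})^{-1}$ prefactor. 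The verification of (QT3) is entirely parallel, using instead $(\id\otimes\Delta)(R)=R^{13}R^{12}$ and conjugation by $J^{23}$.

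The main obstacle is purely organizational: keeping track of which tensor slot each copy of $J$ and $R$ occupies, and applying the cocycle condition and quasi-cocommutativity in the correct order, so that the spurious $J$-factors reassemble exactly into the conjugating factors $J^{31},J^{13},J^{32},J^{23}$ appearing in the factorizations of $R_J^{13}R_J^{23}$ and $R_J^{13}R_J^{12}$. There is no conceptual difficulty beyond this bookkeeping, since the two hexagon identities for $R$ and the cocycle identity for $J$ are exactly strong enough to close the computation; by the symmetry between (QT2) and (QT3), only one of the two need be carried out in full detail.
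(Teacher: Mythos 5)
Your proposal is correct, but note that the paper does not prove this proposition at all: it is recalled verbatim from \cite{Etingof2015} (Proposition 8.3.14) as background, so there is no internal proof to compare against. Your argument is the standard direct algebraic verification, and it does close: (QT1) is the telescoping you describe; for the hexagons, after conjugating by $J^{12}$, invoking $(\Delta\otimes\id)(R)=R^{13}R^{23}$, and applying the cocycle identity $(\Delta\otimes\id)(J)(J\otimes 1)=(\id\otimes\Delta)(J)(1\otimes J)$, one slides $R^{23}$ past $(\id\otimes\Delta)(J)$ by quasi-cocommutativity and then needs exactly two further \emph{permuted images} of the cocycle identity (the images under the transposition of slots $2,3$ and under a cyclic slot permutation) to reassemble the factors $(J^{31})^{-1}R^{13}J^{13}(J^{32})^{-1}R^{23}J^{23}$. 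Your sketch attributes this last absorption to the slot-reversing image $a\otimes b\otimes c\mapsto c\otimes b\otimes a$ alone; strictly, which permuted cocycle identities are required depends on the order of the manipulations, and the reversal by itself does not cancel the prefactor $(\Delta\otimes\id)(J^{21})^{-1}$ --- but since every slot permutation is an algebra automorphism, all such images are available and the bookkeeping does work out, so this is an imprecision of the sketch rather than a gap. For comparison, the cited source proves the statement categorically (the twist is a monoidal structure on the identity functor $\mathrm{Rep}(H)\to\mathrm{Rep}(H^J)$, along which the braiding $\tau\circ R$ transports to $\tau\circ(J^{21})^{-1}RJ$, the hexagons being automatic), which trades your tensor-leg bookkeeping for functoriality; and the paper's own Section \ref{sec:Drinfeld_twist} carries out, in the computation of $\check{R}_J\otimes\operatorname{id}$ and $\operatorname{id}\otimes\check{R}_J$, essentially the same cocycle manipulations you use, packaged as the existence of the conjugating operator $Q=J^{12,3}J^{12}=J^{1,23}J^{23}$.
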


Now we reformulate the definition of Drinfeld twist and give a link with the twist developed in subsection \ref{sub:unique_twist}. We choose $V$ to be a representation of the quasi-triangular Hopf algebra $(H,R)$, and $P:V\otimes V\to V\otimes V$ the permutation matrix and $\check{R}:=PR$, suppose we have a twist, then we have
\begin{equation}
\begin{aligned}
\check{R}_J&=P(J^{21})^{-1}RJ=P^{12}P^{12}(J^{12})^{-1}PR^{12}J^{12}\\
&=(J^{12})^{-1}\check{R}^{(12)}J^{12}\\
\check{R}_J\otimes \operatorname{id}&=(J^{12})^{-1}\check{R}^{(12)}\Delta\otimes\operatorname{id}(J^{-1})\Delta\otimes\operatorname{id}(J)J^{12}\\
&=(J^{12})^{-1}\Delta\otimes\operatorname{id}(J^{-1})\check{R}^{(12)}\Delta\otimes\operatorname{id}(J)J^{12}\\
&=(J^{12,3}J^{12})^{-1}\check{R}^{(12)}J^{12,3}J^{12}\\
\operatorname{id}\otimes\check{R}_J&=(J^{1,23}J^{23})^{-1}\check{R}^{23}(J^{1,23}J^{23})	
\end{aligned}
\end{equation}

And notice that the cocycle equation appears in the two side of $\operatorname{id}\otimes \check{R}$ and $\check{R}\otimes\operatorname{id}$, so we can have the following equivalent way of defining Drinfeld twist
\begin{definition}\label{def:Drinfeld_twist}
[Drinfeld Twist] Suppose we have a vector space $V$ and a Yang-Baxter operator $\check{R}$ acting on $V\otimes V$, an invertible operator $J:V\otimes V\to V\otimes V$ is a twist if there exist invertible operator $Q:V\otimes V\otimes V\to V\otimes V\otimes V$ such that:
\begin{equation}
	\begin{aligned}
		&\check{R}_J=J^{-1}\check{R}J\\
		&\check{R}_J\otimes\operatorname{id}=Q(\check{R}\otimes\operatorname{id})Q^{-1}\\
		&\operatorname{id}\otimes \check{R}_J=Q(\operatorname{id}\otimes\check{R})Q^{-1}\\
	\end{aligned}
\end{equation}
\end{definition}

In this case, we can assume that the groupoid is one point $a$ with one edge $\alpha$ as follows
\[
\pi_1=
\begin{tikzcd}
	a\arrow[loop,"\alpha"']
\end{tikzcd}\quad
\pi_2=
\begin{tikzcd}
	a\arrow[loop,"\alpha"']
\end{tikzcd}\quad
\pi=
\begin{tikzcd}
a\arrow[r,"\beta"]&a
\end{tikzcd}
\]
And $V=V^{\pi_1}=V^{\pi_2}$, the operators $j\in \Gamma(\pi\times \pi,\overline{\operatorname{Hom}}(V^{\pi_1}\otimes V^{\pi_1}, V^{\pi_2}\otimes V^{\pi_2}))$ and $q\in \Gamma(\pi\times \pi,\overline{\operatorname{Hom}}(V^{\pi_2}\otimes V^{\pi_2}\otimes V^{\pi_2}, V^{\pi_1}\otimes V^{\pi_1}\otimes V^{\pi_1}))$ are defined as follows:
\begin{equation*}
\begin{aligned}
&j:\beta\times \beta \mapsto J\\	
&q:\beta\times \beta \mapsto Q\\	
\end{aligned}
\end{equation*}

Then the twist condition for the transfer operators of $j$ in subsection \ref{sub:unique_twist} will be the same as that of the operator $J$ in the definition of \ref{def:Drinfeld_twist}.

\section{Example: dynamical twist}\label{sec:dynamical_twist}
We first recall the Etingof--Varchenko construction \cite{Etingof1999} of dynamical twist in terms of fusion operator for any semisimple lie algebra and its corresponding quantum group, following the book \cite{Etingof2005}.

Let $q$ be a nonzero complex number which is not a root of unity and $\mathfrak{g}$ be a semisimple Lie algebra. And $V$ be a finite dimensional representations of the quantum group $U_q(\mathfrak{g})$, $R$ denote the $R$ matrix of the quantum group. For generic weight $\lambda\in \mathfrak{h}^{*}$, there exists universal fusion operator $J(\lambda)$ and it has the following property.

\begin{theorem}[\cite{Etingof2005},Theorem 3.8]
The universal fusion operator $J(\lambda)$ satisfies the dynamical twist equation on the space $V\otimes V\otimes V$.
\[
J^{12,3}(\lambda)J^{1,2}(\lambda-h^3)=J^{1,23}(\lambda)J^{2,3}(\lambda)
\]
Here the superscripts of $J$ stands for both the components on which the first and second components of $J$ act and the coproduct action of quantum group, for example $J^{1,23}$ means $(1\otimes \Delta)(J)$. The dynamical notation means
\[
J^{1,2}(\lambda-h^3)(a_1\otimes a_2\otimes a_3)=J^{1,2}(\lambda-\rm{weight}(a_3))(a_1\otimes a_2\otimes a_3)
\]
\end{theorem}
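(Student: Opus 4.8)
The plan is to deduce the shifted cocycle equation entirely from the \emph{associativity of composition of intertwining operators} between Verma modules, so that no explicit formula for $J(\lambda)$ is ever needed. First I would recall the basic objects: for generic $\lambda\in\mathfrak{h}^{*}$ and a weight vector $w\in W[\mu]$ in a finite dimensional $U_q(\mathfrak{g})$-module $W$, there is a \emph{unique} intertwining operator $\Phi^{\lambda}_{w}\colon M_{\lambda}\to M_{\lambda-\mu}\otimes W$ normalized by $\Phi^{\lambda}_{w}x_{\lambda}=x_{\lambda-\mu}\otimes w+(\text{lower weight terms})$, and the assignment $w\mapsto\Phi^{\lambda}_{w}$ is a linear isomorphism onto the space of such intertwiners. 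This existence--uniqueness statement is the single place where genericity of $\lambda$ enters, and it is exactly what lets one both \emph{define} the fusion operator and, at the end, \emph{strip it off}. The fusion operator $J_{U,W}(\lambda)\colon U\otimes W\to U\otimes W$ is then introduced by its characterizing relation
\[
(\Phi^{\lambda-\operatorname{wt}(w)}_{u}\otimes\operatorname{id}_{W})\circ\Phi^{\lambda}_{w}=\Phi^{\lambda}_{J_{U,W}(\lambda)(u\otimes w)},
\]
where the shift $\lambda-\operatorname{wt}(w)$ records precisely the dynamical parameter $\lambda-h$ acting on the right tensor factor.

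The main step is to form, for weight vectors $u_{i}\in V_{i}[\mu_{i}]$ (in our case all $V_{i}=V$), the triple composition
\[
\Phi:=(\Phi^{\lambda-\mu_{2}-\mu_{3}}_{u_{1}}\otimes\operatorname{id}_{V_{2}}\otimes\operatorname{id}_{V_{3}})\circ(\Phi^{\lambda-\mu_{3}}_{u_{2}}\otimes\operatorname{id}_{V_{3}})\circ\Phi^{\lambda}_{u_{3}},
\]
a single well-defined map $M_{\lambda}\to M_{\lambda-\mu_{1}-\mu_{2}-\mu_{3}}\otimes V_{1}\otimes V_{2}\otimes V_{3}$, and to evaluate it in two ways using associativity of $\circ$. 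Grouping the first two factors and applying the defining relation twice (first with $\lambda$ replaced by $\lambda-\mu_{3}$, fusing $V_{1}\otimes V_{2}$, then fusing the result with $V_{3}$) yields
\[
\Phi=\Phi^{\lambda}_{J^{12,3}(\lambda)\,J^{1,2}(\lambda-h^{3})\,(u_{1}\otimes u_{2}\otimes u_{3})}.
\]
Grouping the last two factors instead (first fusing $V_{2}\otimes V_{3}$ at parameter $\lambda$, then fusing $V_{1}$ with $V_{2}\otimes V_{3}$) yields
\[
\Phi=\Phi^{\lambda}_{J^{1,23}(\lambda)\,J^{2,3}(\lambda)\,(u_{1}\otimes u_{2}\otimes u_{3})}.
\]
Here the shift $\mu_{3}=\operatorname{wt}(u_{3})$ is exactly the dynamical $h^{3}$ appearing on the left-hand side, while no shift appears in the second grouping because there the innermost fusion is performed at full parameter $\lambda$.

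Finally I would invoke injectivity of $w\mapsto\Phi^{\lambda}_{w}$ to cancel $\Phi^{\lambda}_{(-)}$ from both expressions, obtaining $J^{12,3}(\lambda)J^{1,2}(\lambda-h^{3})=J^{1,23}(\lambda)J^{2,3}(\lambda)$ as operators on $V\otimes V\otimes V$; the \emph{universal} statement follows because the same element of the completed tensor square acts compatibly on all modules simultaneously, so the identity holds at the universal level once it holds on every $V\otimes V\otimes V$. I expect the genuine work to be the \textbf{bookkeeping of dynamical shifts}: one must verify at each application of the defining relation that the parameter of the outer intertwiner matches $\lambda$ minus the total weight passed into the right factor, so that the two groupings produce exactly $\lambda-h^{3}$ versus $\lambda$ and nothing more. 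A secondary, more conceptual point to handle carefully is ensuring the generic-$\lambda$ identity is legitimate as an identity of the universal fusion operator (an element of a suitable completion of $U_q(\mathfrak{g})^{\otimes 3}$) rather than merely of its image on one representation; the associativity argument is otherwise automatic.
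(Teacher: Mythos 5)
Your argument is correct and is essentially the standard proof of this result: the paper itself states the theorem as a citation of \cite{Etingof2005} (Theorem 3.8) without reproducing a proof, and the proof given there proceeds exactly as you describe, by composing three intertwining operators $M_{\lambda}\to M_{\lambda-\mu_1-\mu_2-\mu_3}\otimes V_1\otimes V_2\otimes V_3$ in two bracketings and using the uniqueness of the intertwiner with prescribed expectation value to cancel $\Phi^{\lambda}_{(-)}$. Your bookkeeping of the dynamical shift (the inner fusion at $\lambda-\mu_3$ in the first grouping versus at $\lambda$ in the second) is the correct and only delicate point, and it is handled properly.
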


\begin{cor}[\cite{Etingof2005},Corallory 3.11]\label{cor_twist}
The fusion operator $J(\lambda)$ also satisfies
\begin{equation*}
	J^{32,1}(\lambda)J^{32}(\lambda-h^1)=J^{3,21}(\lambda)J^{21}(\lambda)
\end{equation*}
\end{cor}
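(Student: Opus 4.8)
The plan is to obtain Corollary~\ref{cor_twist} from the dynamical twist equation of Theorem~3.8 by a pure symmetry argument: I conjugate that operator identity on $V\otimes V\otimes V$ by the flip exchanging the first and third tensor factors, which has the effect of relabelling the indices $1\leftrightarrow 3$. Let $P_{13}\colon V\otimes V\otimes V\to V\otimes V\otimes V$ denote this flip, $P_{13}(u\otimes v\otimes w)=w\otimes v\otimes u$, and write $\mathrm{Ad}_{P_{13}}(X)=P_{13}XP_{13}^{-1}$. Because the equation of Theorem~3.8 is an identity of operators and $\mathrm{Ad}_{P_{13}}$ is an algebra automorphism, applying $\mathrm{Ad}_{P_{13}}$ to both sides and distributing it across each product reduces the problem to identifying the image of every individual embedding of $J$.

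Writing $J=\sum_k a_k\otimes b_k$ for the universal fusion operator (a completed element of $U_q(\mathfrak{g})^{\otimes 2}$) and $\Delta(a_k)=\sum a_k'\otimes a_k''$, $\Delta(b_k)=\sum b_k'\otimes b_k''$, I would read off the four required transformations directly from the placement conventions. The two unfused embeddings give $\mathrm{Ad}_{P_{13}}(J^{1,2})=J^{32}$ and $\mathrm{Ad}_{P_{13}}(J^{2,3})=J^{21}$, since moving the content of factor $1$ to factor $3$ and keeping factor $2$ fixed turns $a_k\otimes b_k\otimes 1$ into $1\otimes b_k\otimes a_k=J^{3,2}$ and turns $1\otimes a_k\otimes b_k$ into $b_k\otimes a_k\otimes 1=J^{2,1}$. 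The two coproduct-fused embeddings give $\mathrm{Ad}_{P_{13}}(J^{12,3})=J^{32,1}$ and $\mathrm{Ad}_{P_{13}}(J^{1,23})=J^{3,21}$. Finally, since $P_{13}$ interchanges the Cartan weight operators of the first and third factors, the dynamical shift transforms as $h^3\mapsto h^1$, so $J^{1,2}(\lambda-h^3)\mapsto J^{32}(\lambda-h^1)$. Substituting all of this into the conjugated equation produces exactly
\begin{equation*}
J^{32,1}(\lambda)\,J^{32}(\lambda-h^1)=J^{3,21}(\lambda)\,J^{21}(\lambda),
\end{equation*}
which is the assertion.

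The only point that requires genuine care, and which I expect to be the main obstacle, is the transformation of the two fused embeddings, specifically that the fused block $(1,2)$ is carried to $(3,2)$ in that reversed order and not to $(2,3)$. This is dictated by the order-reversing nature of $P_{13}$ together with the fact that the middle factor is fixed: the operator $J^{12,3}=\sum_k a_k'\otimes a_k''\otimes b_k$ is sent to $\sum_k b_k\otimes a_k''\otimes a_k'$, and under the convention that in $J^{32,1}$ the first-listed factor $3$ receives the first coproduct leg $a_k'$, this is precisely $J^{32,1}$; the analogous bookkeeping gives $J^{1,23}=\sum_k a_k\otimes b_k'\otimes b_k''\mapsto\sum_k b_k''\otimes b_k'\otimes a_k=J^{3,21}$. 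Once these placements are matched against the conventions fixed in the preceding theorem, the substitution is immediate and the corollary follows.
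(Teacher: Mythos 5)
Your argument is correct: applying the flip $P_{13}$ (equivalently, the automorphism of $U_q(\mathfrak{g})^{\otimes 3}$ permuting the first and third legs) to the dynamical twist equation of Theorem 3.8 sends $J^{12,3}\mapsto J^{32,1}$, $J^{1,2}(\lambda-h^3)\mapsto J^{32}(\lambda-h^1)$, $J^{1,23}\mapsto J^{3,21}$, $J^{2,3}\mapsto J^{21}$, and you handle the one delicate point (the order of the fused legs) correctly. The paper imports this corollary from \cite{Etingof2005} without proof, but your route is exactly the intended one, consistent with how the permuted operators $J^{21}_{VW}:=P_{12}J_{VW}(\lambda)P_{12}$, etc., are defined by conjugation with permutation matrices later in the same section.
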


\begin{theorem}[\cite{Etingof2005},Theorem 3.15]
The exchange operator $R_{J}(\lambda):V\otimes V\to V\otimes V$ is defined by $R_{J}(\lambda)=J^{-1}(\lambda)R^{21}J^{21}(\lambda)$, it satisfies the dynamical Yang-Baxter equation
\[
R_{J}^{12}(\lambda-h^3)R_{J}^{13}(\lambda)R_{J}^{23}(\lambda-h^1)=R^{23}_{J}(\lambda)R^{13}_{J}(\lambda-h^2)R_{J}^{12}(\lambda)
\]
\end{theorem}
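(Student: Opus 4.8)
The plan is to establish the dynamical Yang--Baxter equation for $R_J$ by a twisting argument that directly parallels the non-dynamical computation of Section \ref{sec:Drinfeld_twist}: after substituting the definition $R_J(\lambda)=J^{-1}(\lambda)R^{21}J^{21}(\lambda)$, I would show that both sides of the DYBE reduce to the two sides of the ordinary quantum Yang--Baxter equation for the universal $R$-matrix, conjugated by a single common outer factor. The three ingredients I intend to use are: (i) the ordinary QYBE $R^{12}R^{13}R^{23}=R^{23}R^{13}R^{12}$ for the universal $R$-matrix of $U_q(\mathfrak g)$, together with the quasi-triangularity relations $(\Delta\otimes\id)(R)=R^{13}R^{23}$ and $(\id\otimes\Delta)(R)=R^{13}R^{12}$, which also guarantee that $R^{21}$ commutes with coproduct images and hence lets factors be moved past one another; (ii) the dynamical twist equation of Theorem 3.8; and (iii) its companion Corollary \ref{cor_twist}.

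First I would expand each of the six factors on the two sides of the DYBE using the definition of $R_J$, keeping careful track of the spectral shifts $\lambda-h^k$ through the dynamical placeholder notation. Each factor $R_J^{ij}(\lambda-h^k)$ becomes a sandwich $\big(J^{ij}(\lambda-h^k)\big)^{-1}R^{ji}J^{ji}(\lambda-h^k)$, so each side of the equation becomes an alternating word of $J^{-1}$'s, $R^{ji}$'s, and $J$'s. The aim is then to collect the $J$-factors that sit between two adjacent $R^{ji}$'s and to recognize them as either an inverse pair that cancels, or as a product of the coproduct-form operators $J^{12,3}$, $J^{1,23}$, $J^{2,3}$, $J^{1,2}$ appearing in the twist identities.

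The heart of the argument is that the shift $\lambda-h^k$ carried by a given factor is exactly the shift required by the shifted cocycle equation in order to convert a product such as $J^{1,23}(\lambda)J^{2,3}(\lambda)$ into $J^{12,3}(\lambda)J^{1,2}(\lambda-h^3)$ (and the transposed version supplied by Corollary \ref{cor_twist} for the legs carrying $h^1$ and $h^2$). Applying these identities to the $J$'s entering from the left and, symmetrically, to those entering from the right, together with the quasi-triangularity relations that turn the coproduct-superscripted $J$'s into the plain tensor-leg factors multiplying $R$, makes the interior $J$'s telescope. Both sides of the DYBE then take the form $\mathcal J^{-1}\,(\text{word in }R^{21})\,\mathcal J$ for one and the same outer operator $\mathcal J$, where the two words in $R^{21}$ are precisely the two sides of the ordinary QYBE; a single application of the QYBE then closes the argument.

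The main obstacle is the bookkeeping of the dynamical shifts. At each step one must verify that the placeholder $h^i$ acts on the correct tensor factor after the operators have been rearranged, and that the specific shift present --- $-h^3$ on the $12$-factor, $-h^1$ on the $23$-factor, $-h^2$ on the middle $13$-factor --- is exactly the one demanded by the orientation of the twist equation being invoked, since a single mismatched shift destroys the telescoping. Checking that Theorem 3.8 and Corollary \ref{cor_twist} between them cover all the required pairings, and that the quasi-triangularity identities convert every coproduct-split $J$ correctly so that the residual word really is the QYBE expression, is the delicate accounting to which the whole proof reduces; the remaining manipulations are routine.
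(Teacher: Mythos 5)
Your plan is correct in substance and rests on exactly the same three inputs as the paper's proof (the ordinary QYBE for $R$, the dynamical twist equation of Theorem 3.8, and Corollary \ref{cor_twist}), with the same underlying mechanism: exhibit both sides of the DYBE as conjugates of the two sides of the non-dynamical equation by one common $J$-built operator. The organization, however, is genuinely different. The paper never expands the six-factor $R$-form word: it passes to the braid form $\check{R}_J=PR_J$, proves the braid relation \eqref{eq_R} for $\tilde{R}=RP$, and then shows in two short computations that $\check{R}_J(\lambda)\otimes\operatorname{id}$ and $\operatorname{id}\otimes\check{R}_J(\lambda-h^1)$ are each obtained by conjugating $R^{12}P^{12}$ and $R^{23}P^{23}$ by the \emph{single} operator $Q=J^{3,21}(\lambda)J^{21}(\lambda)=J^{32,1}(\lambda)J^{32}(\lambda-h^1)$, the two expressions for $Q$ agreeing precisely by Corollary \ref{cor_twist}. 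Since conjugation by a fixed $Q$ is an algebra homomorphism, the braid relation --- hence the DYBE --- transfers with no further bookkeeping, and the $13$-leg never appears. Your route keeps the three-leg $R$-form and must therefore telescope through the middle factor $R^{13}_J$, which is what forces the shift accounting you rightly identify as the delicate point; the braid-form packaging is what the paper's version buys. One small correction to your ingredient (i): quasi-triangularity does not make $R^{21}$ commute with coproduct images; rather $R\,\Delta(a)=\Delta^{\mathrm{op}}(a)\,R$, so moving a coproduct-superscripted $J$ past $R^{21}$ flips the order of the fused legs (this is exactly the step $R^{21}(J^{3,21})^{-1}=P^{21}(J^{3,21})^{-1}P^{21}R^{21}$ in the paper's computation). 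With the intertwining property used in that corrected form, your telescoping does close.
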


We give another proof that the exchange operator $R_{J}(\lambda)$ satisfies the dynamical Yang-Baxter equation, then reformulate the definition of twist, we first have the following lemma. The $R$ matrix has another version with multiplication by a permutation matrix
\[
\check{R}_J(\lambda)=PR_J(\lambda)=(J^{21}(\lambda))^{-1}PR^{21}J^{21}
\]

\begin{lemma}
	Let $\tilde{R}=RP$, then we have
	\begin{equation}\label{eq_R}
		\tilde{R}^{12}\tilde{R}^{23}\tilde{R}^{12}=\tilde{R}^{23}\tilde{R}^{12}\tilde{R}^{23}
	\end{equation}
\end{lemma}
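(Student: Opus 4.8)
The plan is to reduce the braid relation for $\tilde R=RP$ to the quantum Yang--Baxter equation satisfied by the quantum group $R$-matrix, namely
\begin{equation*}
R^{12}R^{13}R^{23}=R^{23}R^{13}R^{12}.
\end{equation*}
The only ingredient needed besides this is the bookkeeping of how the permutation $P$ intertwines the labelled tensor factors. Writing $P^{12},P^{23}$ for the transpositions acting on $V\otimes V\otimes V$, conjugation by $P^{ij}$ applies the transposition $(ij)$ to the superscripts of any $R^{kl}$; concretely I would use the relations
\begin{equation*}
P^{12}R^{23}=R^{13}P^{12},\quad P^{23}R^{12}=R^{13}P^{23},\quad P^{12}R^{13}=R^{23}P^{12},\quad P^{23}R^{13}=R^{12}P^{23},
\end{equation*}
together with $(P^{ij})^2=\id$.

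First I would expand the left-hand side $\tilde R^{12}\tilde R^{23}\tilde R^{12}=R^{12}P^{12}R^{23}P^{23}R^{12}P^{12}$ and push every permutation to the right, repeatedly applying the intertwining relations above. Each time a permutation crosses an $R$, the superscripts of the latter change in the prescribed way, and one is left with an ordered product of $R$'s on the left and an ordered product of $P$'s on the right. A short computation yields
\begin{equation*}
\tilde R^{12}\tilde R^{23}\tilde R^{12}=R^{12}R^{13}R^{23}\,P^{12}P^{23}P^{12}.
\end{equation*}
Performing the identical manipulation on the right-hand side $\tilde R^{23}\tilde R^{12}\tilde R^{23}$ gives
\begin{equation*}
\tilde R^{23}\tilde R^{12}\tilde R^{23}=R^{23}R^{13}R^{12}\,P^{23}P^{12}P^{23}.
\end{equation*}

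Finally I would observe that the two permutation tails agree, since $P^{12}P^{23}P^{12}=P^{23}P^{12}P^{23}$ (both equal the transposition $P^{13}$ reversing the three factors). Hence the desired identity $\tilde R^{12}\tilde R^{23}\tilde R^{12}=\tilde R^{23}\tilde R^{12}\tilde R^{23}$ holds precisely when $R^{12}R^{13}R^{23}=R^{23}R^{13}R^{12}$, which is the defining quantum Yang--Baxter equation for $R$. I do not expect any genuine obstacle: the content is the standard passage between the quantum Yang--Baxter equation and the braid relation, and the only place demanding care is tracking the superscripts correctly while commuting the permutations past the $R$-matrices.
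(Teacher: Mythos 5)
Your proposal is correct and is essentially the paper's own argument: both reduce the braid relation for $\tilde R=RP$ to the quantum Yang--Baxter equation by commuting the permutation operators through the $R$'s, the only cosmetic difference being that the paper collects the permutations as $P^{13}$ on the left (writing the sides as $P^{13}R^{32}R^{31}R^{21}$ and $P^{13}R^{21}R^{31}R^{32}$), whereas you collect them as $P^{13}$ on the right with unrelabelled superscripts. The two forms are conjugate by $P^{13}$ and invoke the same instance of the Yang--Baxter equation.
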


\begin{proof}
	Because the left hand side of \eqref{eq_R} equals $P^{13}R^{32}R^{31}R^{21}$ and the right hand side of \eqref{eq_R} equals $P^{13}R^{21}R^{31}R^{32}$ and $R$ satisfies the Yang-Baxter equation, so the equation \eqref{eq_R} follows.
\end{proof}

We also have the relations,
\begin{equation}
\begin{aligned}
\check{R}_J(\lambda)\otimes\operatorname{id}_{V}&=(J^{21}(\lambda))^{-1}P^{12}R^{21}J^{21}(\lambda)\\
&=(J^{21}(\lambda))^{-1}P^{12}R^{21}(J^{3,21}(\lambda))^{-1}J^{3,21}(\lambda)J^{21}(\lambda)\\
&=(J^{21}(\lambda))^{-1}P^{12}P^{21}(J^{3,21}(\lambda))^{-1}P^{21}R^{21}J^{3,21}(\lambda)J^{21}(\lambda)\\
&=(J^{3,21}(\lambda)J^{21}(\lambda))^{-1}R_{12}P_{12}(J^{3,21}(\lambda)J^{21}(\lambda))\\
\end{aligned}
\end{equation}

\begin{equation}
\begin{aligned}
	\operatorname{id}_{V}\otimes\check{R}_J(\lambda-h^1)&=(J^{32}(\lambda-h^1))^{-1}P^{23}R^{32}J^{32}(\lambda-h^1)\\
&=(J^{32}(\lambda-h^1))^{-1}P^{23}R^{32}(J^{32,1}(\lambda))^{-1}J^{32,1}(\lambda)J^{32}(\lambda-h^1)\\
&=(J^{32}(\lambda-h^1))^{-1}P^{23}P^{32}(J^{32,1}(\lambda))^{-1}P^{32}R^32J^{32,1}(\lambda)J^{32}(\lambda-h^1)\\
&=(J^{32,1}(\lambda)J^{32}(\lambda-h^1))R^{23}P^{23}(J^{32,1}(\lambda)J^{32}(\lambda-h^1))\\	
\end{aligned}
\end{equation}

From equation \eqref{eq_R} and the corollary \ref{cor_twist}, we know that $\check{R}(\lambda)$ satisfies the equation:
\[
\check{R}^{(23)}_J(\lambda-h^{1})\check{R}_J^{(12)}(\lambda)\check{R}_J^{(23)}(\lambda-h^1)=\check{R}_J^{(12)}(\lambda)\check{R}_J^{(23)}(\lambda-h^1)\check{R}_J^{(12)}(\lambda)
\]

From the proof above, we can reformuate the dynamical drinfeld twist as the following.

\begin{definition}
	Suppose we have a vector space $V$ and a Yang-Baxter operator $\check{R}$ acting on $V\otimes V$, an invertible operator $J(\lambda):V\otimes V\to V\otimes V$ is a twist if there exists invertible operator $Q(\lambda):V\otimes V\otimes V\to V\otimes V\otimes V$ such that
\begin{equation}
\begin{aligned}
&\check{R}_J(\lambda)=J^{-1}(\lambda)\check{R}J(\lambda)\\
&\check{R}_J(\lambda)\otimes \operatorname{id}=Q(\lambda)(\check{R}\otimes \operatorname{id})Q^{-1}(\lambda)\\
&\operatorname{id}\otimes \check{R}_J(\lambda-h^1)=Q(\lambda)(\operatorname{id}\otimes \check{R})Q^{-1}(\lambda)	
\end{aligned}
\end{equation}
then it follows automatically, $\check{R}_J$ satisfies the Yang-Baxter equation.
\end{definition}

In order to be more explicitly show the relation with the groupoids twist in the subsection \ref{sub:unique_twist}.

\subsection{The $\mathfrak{sl}_2$ case preliminary}
We consider the lie algebra $\mathfrak{sl}_2$ over $\C$, let $M_{\lambda}$ denote the Verma module over $\mathfrak{g}$ with highest weight $\lambda\in \mathfrak{g}^{*}$, $x_{\lambda}$ being its highest weight vector and $x^{*}_{\lambda}$ the lowest weight vector of the dual module. Let $V=\rm{span}\{v_{+},v_{-}\}$ be the two dimensional representation of $\mathfrak{sl}_2$, the intertwining operator $\Phi:M_{\lambda}\to M_{\mu}\otimes V$, the vector $x_{\mu}^{*}(\Phi x_{\lambda})\in V[\lambda-\mu]$ is called the expectation value of $\Phi$ and denoted by $\langle \Phi\rangle$.

\begin{lemma}
	$M_{\lambda}$ is irreducible for $\lambda\ne 0,1,2,\dots$, i.e all positive integers and 0.
\end{lemma}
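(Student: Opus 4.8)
The plan is to use the explicit weight-space structure of the Verma module and to locate singular (highest-weight) vectors inside it. Fix the standard basis $e,f,h$ of $\mathfrak{sl}_2$ with $[h,e]=2e$, $[h,f]=-2f$, $[e,f]=h$. By the Poincaré--Birkhoff--Witt theorem, $M_\lambda$ has basis $\{f^n x_\lambda\}_{n\ge 0}$, and since $h\cdot f^n x_\lambda=(\lambda-2n)f^n x_\lambda$, each weight space is one dimensional with pairwise distinct weights $\lambda-2n$.

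First I would record the action of $e$. Using the identity $[e,f^n]=n\,f^{n-1}(h-(n-1))$ in the universal enveloping algebra together with $e\cdot x_\lambda=0$, one obtains
\begin{equation*}
e\cdot f^n x_\lambda=n(\lambda-n+1)\,f^{n-1}x_\lambda,\qquad n\ge 1.
\end{equation*}
Thus $f^n x_\lambda$ (for $n\ge1$) is annihilated by $e$ precisely when $n(\lambda-n+1)=0$, i.e.\ when $\lambda=n-1$ is a nonnegative integer.

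Next I would show that submodules are graded. Any nonzero $\mathfrak{sl}_2$-submodule $N\subset M_\lambda$ is $h$-stable; since $h$ acts diagonalizably on $M_\lambda$ with distinct eigenvalues, $N$ is the direct sum of the weight spaces it meets, hence is spanned by a subset of $\{f^n x_\lambda\}$. Choosing the smallest $n_0$ with $f^{n_0}x_\lambda\in N$ and applying $e$ repeatedly via the displayed formula, if every coefficient $n(\lambda-n+1)$ with $1\le n\le n_0$ is nonzero we descend all the way to $x_\lambda\in N$; applying $f$ then yields $f^k x_\lambda\in N$ for all $k$, so $N=M_\lambda$.

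Finally I would assemble the argument: when $\lambda\notin\{0,1,2,\dots\}$ none of the coefficients $n(\lambda-n+1)$ vanishes for $n\ge1$, so the descent above never stalls and every nonzero submodule equals $M_\lambda$; hence $M_\lambda$ is irreducible. The only place requiring care — the main, and rather mild, obstacle — is the gradedness step, namely justifying that a submodule is a sum of weight spaces; this rests on the diagonalizability of $h$ together with the fact that the weights $\lambda-2n$ are distinct, which is exactly where the one-dimensionality of the weight spaces is used.
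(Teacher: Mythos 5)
Your proof is correct and is the standard argument: the PBW basis $\{f^n x_\lambda\}$, the identity $e\cdot f^n x_\lambda=n(\lambda-n+1)f^{n-1}x_\lambda$, the observation that any submodule is a sum of the (one-dimensional, pairwise distinct) weight spaces it meets, and the descent to $x_\lambda$ when no coefficient $n(\lambda-n+1)$ vanishes, which is exactly the condition $\lambda\notin\{0,1,2,\dots\}$. The paper states this lemma without proof, as a classical fact about $\mathfrak{sl}_2$ Verma modules, so there is nothing to compare against; your write-up supplies the expected argument and correctly isolates the gradedness of submodules as the only step needing justification.
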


\begin{lemma}
If $M_{\mu}$ is irreducible, then for all homogeneous $v\in V$, there exists a unique intertwining operator $\Phi:M_{\mu+\rm{wt}v}\to M_{\mu}\otimes V$ such that $\langle \Phi\rangle =v$, this intertwine will be denoted by $\Phi^{v}_{\mu+\rm{wt}v}$.
\end{lemma}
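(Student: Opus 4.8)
The plan is to reduce the statement to a question about singular (highest-weight) vectors in $M_\mu\otimes V$ and then to solve an explicit recursion whose solvability is governed precisely by the irreducibility hypothesis. First I would invoke the universal property of the Verma module: since $M_{\mu+\operatorname{wt}v}$ is freely generated over $U(\mathfrak n_-)$ by its highest weight vector $x_{\mu+\operatorname{wt}v}$, an intertwining operator $\Phi\colon M_{\mu+\operatorname{wt}v}\to M_\mu\otimes V$ is completely determined by $\Phi(x_{\mu+\operatorname{wt}v})$, and a linear assignment extends to a $\mathfrak{sl}_2$-module map if and only if this image is a \emph{singular} vector of weight $\mu+\operatorname{wt}v$, i.e. a vector of that weight annihilated by the raising operator $e$. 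Thus $\Hom_{\mathfrak{sl}_2}(M_{\mu+\operatorname{wt}v},M_\mu\otimes V)$ is identified with the space of such singular vectors, and under this identification the expectation value $\langle\Phi\rangle=x_\mu^*(\Phi\,x_{\mu+\operatorname{wt}v})$ becomes the operation of extracting the coefficient of the top vector $x_\mu$.

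Next I would write a general weight-$(\mu+\operatorname{wt}v)$ vector of $M_\mu\otimes V$ as $w=\sum_{k\ge 0}f^kx_\mu\otimes u_k$ with $u_k\in V[\operatorname{wt}v+2k]$; since $V$ is finite dimensional only finitely many $u_k$ are nonzero, and $x_\mu^*(w)=u_0$. Using the standard identity $e\,f^{k+1}x_\mu=(k+1)(\mu-k)\,f^kx_\mu$ together with $e(f^kx_\mu\otimes u_k)=ef^kx_\mu\otimes u_k+f^kx_\mu\otimes eu_k$ and collecting the coefficient of each $f^kx_\mu$, the singular-vector equation $ew=0$ becomes the weight-consistent recursion
\[
(k+1)(\mu-k)\,u_{k+1}=-\,e\,u_k,\qquad k\ge 0.
\]
Prescribing $u_0=v$ then determines every subsequent $u_k$, so the map $w\mapsto u_0$ sending a singular vector of weight $\mu+\operatorname{wt}v$ to its leading component in $V[\operatorname{wt}v]$ is injective, and it is surjective onto homogeneous $v$ exactly when the recursion can be run.

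Finally I would verify solvability: the coefficient $(k+1)(\mu-k)$ vanishes only if $\mu=k$ for some $k\ge 0$, that is if $\mu\in\Z_{\ge 0}$, which is precisely the set excluded by the preceding lemma under the assumption that $M_\mu$ is irreducible. Hence for $M_\mu$ irreducible each $u_{k+1}$ is uniquely recovered from $u_k$, and because $e$ is nilpotent on the finite-dimensional $V$ the sequence terminates, producing a genuine (finite) singular vector $w$ with $x_\mu^*(w)=v$. This $w$ is the unique singular vector with the prescribed $x_\mu$-component, so the associated $\Phi=\Phi^{v}_{\mu+\operatorname{wt}v}$ exists and is unique, which is the assertion of the lemma. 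The single delicate point—and the only place where the irreducibility of $M_\mu$ is genuinely used—is the non-vanishing of the recursion coefficients $(k+1)(\mu-k)$; everything else is a formal consequence of the universal property of $M_{\mu+\operatorname{wt}v}$ and the finite-dimensionality of $V$.
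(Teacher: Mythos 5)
Your argument is correct and is the standard one: the paper states this lemma without proof (it is quoted from the theory of intertwining operators in the cited book of Etingof--Schiffmann), and the proof given there is exactly your reduction via Frobenius reciprocity to singular vectors $w=\sum_k f^k x_\mu\otimes u_k$ in $M_\mu\otimes V$ together with the recursion $(k+1)(\mu-k)u_{k+1}=-e\,u_k$, whose coefficients are nonzero precisely because irreducibility of $M_\mu$ forces $\mu\notin\Z_{\ge 0}$. All the computational details (the identity $ef^{k+1}x_\mu=(k+1)(\mu-k)f^kx_\mu$, the identification of $\langle\Phi\rangle$ with $u_0$, and the termination of the recursion by finite-dimensionality of $V$) check out.
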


Let $V,W$ be finite-dimensional representations of $\mathfrak{g}$, fix a generic $\lambda$, let $\gamma,\beta\in \mathfrak{h}^{*}$ and $w\in W[\gamma],v\in V[\beta]$, the assignment
\begin{align*}
	w,v\to \langle (\Phi^w_{\lambda-\beta}\otimes \id)\Phi^v_{\lambda}\rangle \in (W\otimes V)[\gamma+\beta]
\end{align*}
combining these maps for all $\gamma,\beta$, we get a linear map $J_{WV}(\lambda):W\otimes V\to W\otimes V$

\begin{theorem}
\label{Th_E_V}
These operator satisfy the following dynamical twist equation in $V\otimes W\otimes U$:
\begin{align*}
	J^{12,3}_{V\otimes W,U}(\lambda)J^{(12)}_{VW}(\lambda-h^3)=J^{1,23}_{V,W\otimes U}(\lambda)J^{23}_{WU}(\lambda)
\end{align*}
\end{theorem}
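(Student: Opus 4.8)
The plan is to prove the identity by computing, in two different ways, the expectation value of a single three-fold composition of intertwining operators, and then to invoke the uniqueness of intertwiners established above to identify each grouping with a composite fusion operator. Fix generic $\lambda$ and homogeneous vectors $v\in V[\beta]$, $w\in W[\gamma]$, $u\in U[\delta]$, and form the composition
\begin{equation*}
\Psi=(\Phi^{v}_{\lambda-\gamma-\delta}\otimes\id_{W}\otimes\id_{U})(\Phi^{w}_{\lambda-\delta}\otimes\id_{U})\Phi^{u}_{\lambda}\colon M_{\lambda}\longrightarrow M_{\nu}\otimes V\otimes W\otimes U,
\end{equation*}
where $\nu=\lambda-\beta-\gamma-\delta$. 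Each factor is a $\mathfrak g$-module map, so $\Psi$ is an intertwiner and $\langle\Psi\rangle=x^{*}_{\nu}(\Psi x_{\lambda})\in(V\otimes W\otimes U)[\beta+\gamma+\delta]$ is well defined. Since the operators $J$ are recovered from such expectation values as $v,w,u$ range over weight bases, it suffices to show that both sides of the asserted equation, applied to $v\otimes w\otimes u$, compute $\langle\Psi\rangle$.

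The key step is a \emph{fusion lemma}: the composition of two adjacent intertwiners, viewed as a map into the tensor product representation, is again a single intertwiner. For the first grouping, $(\Phi^{v}_{\lambda-\gamma-\delta}\otimes\id_{W})\Phi^{w}_{\lambda-\delta}\colon M_{\lambda-\delta}\to M_{\nu}\otimes(V\otimes W)$ is an intertwiner valued in $V\otimes W$, and since $M_{\nu}$ is irreducible for generic $\lambda$, the uniqueness statement forces it to equal $\Phi^{\xi}_{\lambda-\delta}$ with $\xi=\langle(\Phi^{v}\otimes\id_{W})\Phi^{w}\rangle=J_{VW}(\lambda-\delta)(v\otimes w)$. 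Substituting this back and applying the definition of the fusion operator to the remaining two-fold composition $(\Phi^{\xi}_{\lambda-\delta}\otimes\id_{U})\Phi^{u}_{\lambda}$ gives
\begin{equation*}
\langle\Psi\rangle=J^{12,3}_{V\otimes W,U}(\lambda)\,J^{12}_{VW}(\lambda-\delta)(v\otimes w\otimes u).
\end{equation*}
Because $U$ occupies the third tensor slot and $\delta=\operatorname{wt}(u)$, the shift $\lambda-\delta$ is exactly the dynamical shift $\lambda-h^{3}$, so the right-hand side is the left-hand side of the theorem.

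Grouping the same $\Psi$ the other way, one fuses $W$ and $U$ first: $(\Phi^{w}_{\lambda-\delta}\otimes\id_{U})\Phi^{u}_{\lambda}=\Phi^{\eta}_{\lambda}$ with $\eta=J_{WU}(\lambda)(w\otimes u)$, where now \emph{no} shift appears because the inner intertwiner starts from $M_{\lambda}$. Fusing $V$ with $W\otimes U$ then yields
\begin{equation*}
\langle\Psi\rangle=J^{1,23}_{V,W\otimes U}(\lambda)\,J^{23}_{WU}(\lambda)(v\otimes w\otimes u),
\end{equation*}
which is the right-hand side of the theorem. Equating the two expressions for $\langle\Psi\rangle$ and letting $v,w,u$ vary proves the identity for generic $\lambda$; it extends to all $\lambda$ by the meromorphic dependence of $J(\lambda)$.

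I expect the main obstacle to be the fusion lemma together with its weight bookkeeping. One must check that the composite of intertwiners is honestly an intertwiner into the tensor product representation (using that $\id$ on a module factor and the associativity constraint are module maps), and that the relevant intermediate Verma modules are irreducible for generic $\lambda$ so that uniqueness applies and pins the composite down by its expectation value. Equally important is the careful placement of the weight shift: verifying that fusing the first pair occurs over the base $M_{\lambda-\delta}$, producing the $-h^{3}$ shift, while fusing the second pair occurs over $M_{\lambda}$, producing no shift. Once these points are in place, the statement follows purely from associativity of composition of linear maps.
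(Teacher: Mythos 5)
Your argument is correct and is precisely the standard proof of this identity (the one in Etingof--Latour/Etingof--Varchenko that the paper cites rather than reproduces): compute $\langle(\Phi^{v}\otimes\id\otimes\id)(\Phi^{w}\otimes\id)\Phi^{u}_{\lambda}\rangle$ by fusing the two adjacent intertwiners in the two possible orders, using irreducibility of the intermediate and target Verma modules for generic $\lambda$ to invoke uniqueness, with the $-h^{3}$ shift arising because the $V$--$W$ fusion takes place over $M_{\lambda-\operatorname{wt}(u)}$. The weight bookkeeping in your two groupings matches the paper's conventions for $J_{WV}(\lambda)$, so there is nothing to add.
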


Let $P_{12}:W\otimes V\to V\otimes W$ denote the permutation operator and let $J^{21}_{VW}(\lambda):W\otimes V\to W\otimes V$ be defined as follows: $J^{21}_VW:=P_{12}J_{VW}(\lambda)P_{12}$, we can define the following operators on $V\otimes W\otimes U:$
\begin{align*}
	&J^{13}_{VU}(\lambda-h^2):=P_{23}J^{12}_{VU}(\lambda-h^3)P_{23}\\
	&J^{23}_{WU}(\lambda-h^1):=P_{12}P_{23}J^{12}_{WU}(\lambda-h^3)P_{23}P_{12}
	&J^{13,2}_{V\otimes U,W}(\lambda)=P_{23}J^{12,3}_{V\otimes U,W}P_{23}
\end{align*}
other notations are defined similarly by using there permutation operators.

\begin{cor}
\begin{align*}
	J^{32}_{UW}(\lambda-h^1)=J^{23,1}_{W\otimes U,V}(\lambda)^{-1}J^{3,12}_{U,V\otimes W}(\lambda)J^{21}_{WV}(\lambda)
\end{align*}
\end{cor}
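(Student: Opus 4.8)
The plan is to obtain the identity by conjugating the dynamical twist equation of Theorem~\ref{Th_E_V} with a permutation operator and then reading off each factor through the permuted--fusion--operator notations introduced just above the statement. Since that twist equation holds for an arbitrary ordered triple of finite--dimensional representations, I would first apply it to the triple $(U,W,V)$ placed in tensor positions $1,2,3$, so that it reads
\[
J^{12,3}_{U\otimes W,V}(\lambda)\,J^{12}_{UW}(\lambda-h^3)=J^{1,23}_{U,W\otimes V}(\lambda)\,J^{23}_{WV}(\lambda)
\]
as an identity on $U\otimes W\otimes V$. Conjugating both sides by $P_{13}$ then yields an identity on $V\otimes W\otimes U$, with positions $1,2,3$ now carrying $V,W,U$; in particular the dynamical argument transforms as $h^3\mapsto h^1$.

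Next I would match the four conjugated factors with the operators in the statement, using exactly the conventions of the definitions $J^{21}_{VW}=P_{12}J_{VW}P_{12}$, $J^{13,2}_{V\otimes U,W}=P_{23}J^{12,3}_{V\otimes U,W}P_{23}$, and the analogues ``defined similarly by using the permutation operators.'' Concretely, one has $P_{13}J^{12,3}_{U\otimes W,V}P_{13}=J^{23,1}_{W\otimes U,V}$, $P_{13}J^{12}_{UW}(\lambda-h^3)P_{13}=J^{32}_{UW}(\lambda-h^1)$, $P_{13}J^{1,23}_{U,W\otimes V}P_{13}=J^{3,12}_{U,V\otimes W}$, and $P_{13}J^{23}_{WV}P_{13}=J^{21}_{WV}$. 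Substituting these four identifications turns the conjugated twist equation into
\[
J^{23,1}_{W\otimes U,V}(\lambda)\,J^{32}_{UW}(\lambda-h^1)=J^{3,12}_{U,V\otimes W}(\lambda)\,J^{21}_{WV}(\lambda).
\]
Since the universal fusion operator is invertible, so is $J^{23,1}_{W\otimes U,V}(\lambda)$, and left--multiplying by its inverse gives the asserted formula. Equivalently, one may start from Corollary~\ref{cor_twist}, attach the labels $V,W,U$ to positions $1,2,3$, and rename the fused legs by the same permutation dictionary.

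The routine part is the final algebra of isolating $J^{32}_{UW}(\lambda-h^1)$; the delicate part, and the step I would write out most carefully, is the bookkeeping in the identifications above. One must simultaneously track how $P_{13}$ moves the single tensor legs, how it reorders the two components of each fused (coproduct) leg, and how it acts on the dynamical argument via $h^i\mapsto h^{\sigma(i)}$. The reordering of the coproduct components is precisely what the permuted notations $J^{23,1}_{W\otimes U,V}$ and $J^{3,12}_{U,V\otimes W}$ are designed to encode, so the matching is a matter of unwinding those definitions rather than supplying new input; beyond Theorem~\ref{Th_E_V} the only genuine ingredient is the invertibility of the fusion operator, which is needed to pass the factor $J^{23,1}_{W\otimes U,V}(\lambda)$ to the other side.
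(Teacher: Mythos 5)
Your proposal is correct and is exactly the derivation the paper intends (the paper states this corollary without writing out a proof, but it is the labelled version of Corollary~\ref{cor_twist}, which is itself the $P_{13}$-conjugate of the dynamical twist equation): apply Theorem~\ref{Th_E_V} to the triple $(U,W,V)$, conjugate by $P_{13}$ so that $h^3\mapsto h^1$ and the legs relabel as you describe, and left-multiply by $J^{23,1}_{W\otimes U,V}(\lambda)^{-1}$. Your explicit attention to the bookkeeping identification $P_{13}J^{12,3}_{U\otimes W,V}P_{13}=J^{32,1}_{U\otimes W,V}=J^{23,1}_{W\otimes U,V}$ (simultaneously reordering the superscript pair and the tensor factors in the subscript) is the only delicate point, and you resolve it consistently with the paper's conventions.
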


we also know that there exists an obvious Yang-Baxter operator $R_1=P$ on $V$, then we can define the following operator 
\begin{align*}
	R_2(\lambda)=(J_{\lambda})^{-1}R_1J_{\lambda}=(J_{VV}(\lambda)P)^{-1}P(J_{VV}(\lambda)P)
\end{align*}

Then we have the following
\begin{align*}
	R_2(\lambda)\otimes \id&=\left((J_{VV}(\lambda)P)^{-1}P(J_{VV}(\lambda)P)\right)\otimes \id\\
	&=\left( (J_{VV}(\lambda)P)^{-1}\otimes \id\right)(P\otimes \id)(\left(J_{VV}(\lambda)P\right)\otimes \id)\\
	&=\left((PJ_{VV}(\lambda)P)^{-1}\otimes \id\right)(P\otimes \id)\left(PJ_{VV}(\lambda)P\right)\otimes \id)\\
	&=(J^{21}_{VV}(\lambda))^{-1}(P\otimes\id)J^{21}_{VV}(\lambda)\\
	&=(J^{3,12}(\lambda)_{V,V\otimes V}J^{21}_{VV}(\lambda))^{-1}(P\otimes \id)J^{3,12}_{V,V\otimes V}(\lambda)J^{21}_{VV}(\lambda)\\
\end{align*}
\begin{align*}
	\id\otimes R_2(\lambda-h^1)&=\id\otimes \left((J_{VV}(\lambda-h^1)P)^{-1}P(J_{VV}(\lambda-h^1)P)\right)\\
	&=\left(\id\otimes (PJ_{VV}(\lambda-h^1)P)^{-1}\right)(\id\otimes P)\left(\id\otimes (PJ_{VV}(\lambda-h^1)P)\right)\\
	&=              (J^{32}(\lambda-h^{1}))^{-1}(\id\otimes P)J^{32}(\lambda-h^{1})\\
	&=\left(J^{23,1}_{V\otimes V,V}(\lambda)J_{VV}^{32}(\lambda-h^{1})\right)^{-1}(\id\otimes P) J^{23,1}_{V\otimes V,V}(\lambda)J_{VV}^{32}(\lambda-h^{1})
\end{align*}

We know that $\check{R}_2(\lambda)$ satisfies the dynamical Yang-Baxter equation.

\subsection{Groupoid picture for the $\mathfrak{sl}_2$ case}
For the $\mathfrak{sl}_2$ case, the groupoid we are going to construct is to insure $M_{\lambda}$ is irreducible, then the groupoid should not contain $\{0,1,2,\dots,\}$ and also want avoid the singularity of the $R$ matrix which is $(-1)$ and the singularity of the twist we construct which is $-2$.

Let $\pi_1$ denote the action groupoid $(\mathbb{Z}+b)\rtimes \mathbb{Z}$ where $b$ is some generic shift to avoid the singularity. Let $\pi_2$ denote the same groupoid, then we can form an intertwiner groupoid by simply connect the same object in $\pi_1$ and $\pi_2$ as in the figure \ref{E_V}, by abuse of notations the numbers in $\pi_1$ and $\pi_2$ are actually shifted with this $b$. For example $-4$ actually is $-4+b$:
\[
\begin{tikzcd}
\pi_1&\dots\arrow[r,bend right,"+"]&3\arrow[l, bend right,"-"']\arrow[r,bend right,"+"]&4\arrow[r,bend right,"+"]\arrow[l, bend right,"-"']&5\arrow[l,bend right,"-"']\arrow[r,bend right,"+"]&\dots\arrow[l,bend right,"-"']\\
\pi_1&\dots\arrow[r,bend right,"+"]&3\arrow[l, bend right,"-"']\arrow[r,bend right,"+"]&4\arrow[r,bend right,"+"]\arrow[l, bend right,"-"']&5\arrow[l,bend right,"-"']\arrow[r,bend right,"+"]&\dots\arrow[l,bend right,"-"']\\
\pi& (i)_{\pi_1}\arrow[r]&(i)_{\pi_2}&i\in \Z
\end{tikzcd}
\]

We can define the following groupoid graded vector spaces for arrows $(a,a+1)\in \pi_1=\pi_2$ and arrows $(a,a-1)\in \pi_1=\pi_2$
\begin{align*}
	&V^{\pi_1}_{a,a+1}=\C v_{+},\quad V^{\pi_1}_{a,a-1}=\C v_{-}\\
	&V^{\pi_2}_{a,a+1}=\C v_{+},\quad V^{\pi_2}_{a,a+1}=\C v_{-}\\	
\end{align*}

We can define the following transfer operator $j\in \Gamma(\pi_1\times \pi_1,\operatorname{Hom}(V^{\pi_1}\otimes V^{\pi_1}, V^{\pi_2}\otimes V^{\pi_2}))$,

\begin{equation}
\begin{aligned}
&j:(\lambda+\mu,\lambda)\mapsto \oplus_{\mu=\mu_1+\mu_2}\langle (\Phi^{v_{\mu_2}}_{\lambda-\mu_1}\otimes 1)\Phi^{v_{\mu_1}}_{\lambda}\rangle=J_{\lambda}|_{V\otimes V[\mu]}\\
&q:(\lambda+\mu,\lambda)\mapsto J^{3,12}(\lambda)J^{21}(\lambda)|_{V\otimes V\otimes V[\mu]}\\
\end{aligned}
\end{equation}

\begin{remark}
Notice the difference between the groupoid structure of subsection \ref{sub:8v_sos_example} of eight vertex and SOS models, this is for the consideration of the weight zero condition.
\end{remark}

\section{Example: cell system}\label{sec:example_cell_system}
We first reformulate Ocneanu cell calculus \cite{Ocneanu1988,Ocneanu1991} and also the cell system introduced in \cite{Roche1990}. The theory of cell calculus is a rather rich theory and we only focus one part of it.

Let $\pi_1$ and $\pi_2$ be two finite unoriented graphs with distinguished nodes $*_1$ and $*_2$, with two incidence matrices $M_1$ and $M_2$, we say that there exists a intertwiner from $M_1$ to $M_2$ if there exists a $\rm{card}(\operatorname{Ob}(\pi_1))\times \rm{card}(\operatorname{Ob}(\pi_2))$ matrix $C$ with nonnegative integer coeffcients, such that:
\begin{equation}\label{eq:initial_point}
M_1C=CM_2,\quad C_{*_1,i}=1\Leftrightarrow i=*_2,
\end{equation}
and for any $i\in \operatorname{Ob}(\pi_1)$, there exist $j\in \operatorname{Ob}(\pi_2)$ such that $C_{i,j}\ne 0$, this matrix will give rise to the connecting groupoid, we denote it by $\pi$.

\begin{lemma}\label{lem:cell_lemma_connecting_system}
There exists a quasi-unique connecting system for $\pi_2$.
\end{lemma}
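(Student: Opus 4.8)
The plan is to read the connecting system directly off the integer matrix $C$. Recall that $C$ presents $\pi$ by placing $C_{i,j}$ arrows from $i\in\operatorname{Ob}(\pi_1)$ to $j\in\operatorname{Ob}(\pi_2)$, so that $|s_\pi^{-1}(i)|=\sum_j C_{i,j}$ and $|t_\pi^{-1}(j)|=\sum_i C_{i,j}$. Because the compatibility relation $\beta_{\gamma\circ\gamma'}=\beta_{\gamma'}$ of Definition \ref{def:connecting_system_2} forces $\beta_\gamma$ to depend only on the source $s_{\pi_2}(\gamma)$, specifying a connecting system amounts to choosing, for every object $c\in\operatorname{Ob}(\pi_2)$ that is the source of a path, a single arrow $\beta_c\in t_\pi^{-1}(c)$; such an arrow exists since $t_\pi$ is surjective (every column of $C$ is nonzero). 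I would take $\Sigma_{\pi_2}=\{\beta_c\}$ and set $\beta_\gamma:=\beta_{s_{\pi_2}(\gamma)}$.

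First I would isolate the distinguished arrow. The normalization $C_{*_1,i}=1\Leftrightarrow i=*_2$ of \eqref{eq:initial_point}, which in the cell calculus expresses that the vacuum row of the intertwiner $C$ is the indicator of $*_2$, gives $\sum_j C_{*_1,j}=1$; hence $*_1$ has exactly one outgoing arrow $\beta_*\in\pi$, with $t_\pi(\beta_*)=*_2$ and $s_\pi^{-1}(s_\pi(\beta_*))=s_\pi^{-1}(*_1)=\{\beta_*\}$. This is precisely the uniqueness property \eqref{eq:quasi_unique}. Accordingly I would make the choice $\beta_{*_2}:=\beta_*$ in the paragraph above, leaving $\beta_c$ arbitrary for $c\neq *_2$.

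It then remains to verify the two clauses of Definition \ref{def:qunique}. That $\Sigma_{\pi_2}$ is a connecting system is immediate: $t_\pi(\beta_\gamma)=s_{\pi_2}(\gamma)$ and $\beta_{\gamma\circ\gamma'}=\beta_{\gamma'}$ hold by construction, and every $\beta_c$ is realised as some $\beta_\gamma$ because in the connected graph $\pi_2$ each vertex is the source of a path. For quasi-uniqueness, fix $\gamma\in\pi_2$ with source $c=s_{\pi_2}(\gamma)$; by connectedness of $\pi_2$ there is a path $\gamma'$ from $*_2$ to $c$, so that $t(\gamma')=s(\gamma)$ and $s(\gamma')=*_2$, whence its connecting arrow is $\beta_{\gamma'}=\beta_{*_2}=\beta_*$. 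Since $t_\pi(\beta_*)=*_2=s(\gamma')$ and $\beta_*$ satisfies \eqref{eq:quasi_unique}, the configuration \eqref{eq:graph_quasi_uni} is realised for this $\gamma$, and as $\gamma$ was arbitrary the system is quasi-unique.

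The main obstacle is the single structural input in the second paragraph: that the distinguished vertex $*_1$ has a genuinely unique lift to $\pi$, i.e.\ that no off-diagonal entry of the $*_1$-row of $C$ can exceed $1$. The stated hypothesis gives this at the $*_2$ entry and forbids the value $1$ elsewhere, but to exclude larger multiplicities one must invoke the Perron--Frobenius rigidity of the intertwining relation $M_1C=CM_2$ for connected graphs together with the vacuum normalization; everything else is bookkeeping with fiber products and graph connectivity.
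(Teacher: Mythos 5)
Your proof is correct and follows essentially the same route as the paper's one-line argument: the normalization $C_{*_1,i}=1\Leftrightarrow i=*_2$ singles out the distinguished arrow $\beta_*$ from $*_1$ to $*_2$ as the unique element of $s_\pi^{-1}(*_1)$, and for any $\gamma\in\pi_2$ one takes a path $\gamma'$ from $*_2$ to $s_{\pi_2}(\gamma)$ to realize the configuration of Definition \ref{def:qunique}; you merely spell out the construction of the full system $\Sigma_{\pi_2}$, which the paper leaves implicit. The multiplicity worry you raise at the end (entries $\geq 2$ in the $*_1$-row) is resolved the same way in the paper, which implicitly reads the normalization as saying the $*_1$-row is the indicator vector of $*_2$, so this is a matter of interpreting the hypothesis rather than a gap in your argument.
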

\begin{proof}
From the condition that $C_{*_1,i}=1\Leftrightarrow i=*_2,$ for any path $\gamma$ in $\pi_2$, we can have a path from the starting point $*_2$ to $s_{\pi}(\gamma)$ and the edge $\beta_1$ which connect $*_1$ and $*_2$ satisfies the condition requirement of the quasi-unique connecting system. 
\end{proof}

The natural $\pi_1$ graded vector space $V^{\pi_1}$ is given by
$V^{\pi_1}_{\alpha}:=\C e_{\alpha}$, that is each edge is associated to a one dimensional vector space, similar for $V^{\pi_2}$, suppose that we have an dynamical Yang-Baxter operator $\check{R}_1$ on $V^{\pi_1}$. 

For each \emph{left invertible} map $c\in \Gamma(\pi\times \pi,\overline{\operatorname{Hom}}(V^{\pi_1},V^{\pi_2}))$ with its inverse $c^{-1}\in \Gamma(\pi\times \pi,\overline{\operatorname{Hom}}(V^{\pi_2},V^{\pi_1}))$ , let $j=c^{-1}*_{\otimes}c^{-1}\in \Gamma(\pi\times \pi,\overline{\operatorname{Hom}}(V^{\pi_2}\otimes V^{\pi_2},V^{\pi_1}\otimes V^{\pi_1}))$, from the lemma \ref{lem:factorization_invertible}, it has inverse with $j^{-1}=c^{-1}*_{\otimes}c^{-1}$.

With the map $j$ and a chosen quasi-unique connecting system by lemma \ref{lem:cell_lemma_connecting_system}, we can define  $\check{R}_2\in \Psi\big(\pi\times\pi,\operatorname{End}_{\rhd}(V^{\pi_2}\otimes V^{\pi_2})\big)$ 
\begin{equation}\label{eq:cell_twist_R}
	\check{R}_2(\beta_1,\beta_1)=j^{-1}*_{\circ}(\check{R}_1j):=\sum_{\beta}j^{-1}(\beta,\beta_1)\check{R}_1j(\beta_1,\beta)
\end{equation}

\begin{definition}
	A \emph{left invertible} $c$ is called a cell system if for any $\gamma\in \pi_2$, we have the following relations 
	\begin{subequations}\label{eq:cell_system_twist}
		\begin{equation}
			\sum_{\beta}j^{-1}(\beta,\beta_1)\check{R}_1j(\beta_1,\beta)=\sum_{\beta'}j^{-1}(\beta',\beta'_1)\check{R}_1j(\beta'_1,\beta')
		\end{equation}
		\begin{equation}
			\sum_{\beta}j^{-1}(\beta,\beta_2)\check{R}_1j(\beta_1,\beta)=0
		\end{equation}
	\end{subequations}
	for any $\beta_1$ and $\beta'_1$ with $t_{\pi}(\beta'_1)=t_{\pi}(\beta_1)=s_{\pi_2}(\gamma)$, for any $\beta_2\ne \beta_1$ with $s_{\pi}(\beta_2)=s(\beta_1)$.
\end{definition}

\begin{prop}
Suppose that $c$ is invertible, then $c$ is a cell twist.
\end{prop}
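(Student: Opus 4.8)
The plan is to derive the two cell-system relations \eqref{eq:cell_system_twist} from the invertibility of $c$ by identifying them with the weight-zero relation \eqref{eq:graph_further_relation}, which subsection \ref{sub:weight_zero_relation} already extracts from any invertible intertwiner. First I would record what invertibility buys at the level of the fused transfer operators: by Lemma \ref{lem:factorization_invertible} the map $j=c^{-1}*_{\otimes}c^{-1}$ is invertible with inverse $c*_{\otimes}c$, and the orthogonality identities $c*_{\circ}c^{-1}=\operatorname{id}$ and $c^{-1}*_{\circ}c=\operatorname{id}$ hold. Since every $V^{\pi}_{\beta}$ is one-dimensional, the cell transfer operator $c$ is exactly the matrix-element description of a homomorphism $C\in\operatorname{Hom}(V^{\pi_1}\otimes_{*\pi_1}V^{\pi},V^{\pi}\otimes_{*\pi_2}V^{\pi_2})$, and the defining formula \eqref{eq:cell_twist_R} for $\check{R}_2$ is the one-dimensional reformulation of the $RCC$ relation \eqref{eq:RCC}, in the sense of the final proposition of subsection \ref{sub:intertwiner_from_twist}.

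Granting that $C$ intertwines $\check{R}_1$ with the $\check{R}_2$ produced by \eqref{eq:cell_twist_R}, I would then run the computation of subsection \ref{sub:weight_zero_relation}: multiplying \eqref{eq:RCC} on the right by $(C^{(12)}C^{(23)})_{r}^{-1}$ yields Roche's relation $S'$, that is the weight-zero relation \eqref{eq:graph_further_relation}, whose right-hand side carries the factor $\delta_{\beta_1,\beta_2}$. Reading this identity component by component in the one-dimensional setting, the part with $\beta_2\ne\beta_1$ and $s_{\pi}(\beta_2)=s_{\pi}(\beta_1)$ is precisely the vanishing asserted by the second relation of \eqref{eq:cell_system_twist}, while the diagonal part $\beta_2=\beta_1$ returns the same operator $\check{R}_2$ for every admissible frame, which is the first relation of \eqref{eq:cell_system_twist}. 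Thus the two cell-system relations are nothing but the scalar-component form of \eqref{eq:graph_further_relation}, and $c$ is a cell twist.

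The delicate point, which I expect to be the main obstacle, is the frame-independence recorded in the first relation of \eqref{eq:cell_system_twist}: one must show that $\sum_{\beta}j^{-1}(\beta,\beta_1)\check{R}_1 j(\beta_1,\beta)$ is independent of the connecting path $\beta_1$ among those with common target $s_{\pi_2}(\gamma)$, equivalently that the conjugate $C^{(12)}C^{(23)}\check{R}_1(C^{(12)}C^{(23)})^{-1}$ descends to an operator on $V^{\pi_2}\otimes V^{\pi_2}$ that does not see the $V^{\pi}$-slot. This frame-preserving block structure is exactly the input needed to legitimise the passage to the $RCC$ relation above, so it cannot be obtained from invertibility alone; here the quasi-unique connecting system supplied by Lemma \ref{lem:cell_lemma_connecting_system}, the incidence relation $M_1C=CM_2$ of \eqref{eq:initial_point}, and the fact that one-dimensionality turns all cell data into scalars must be combined. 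Once this block structure is established, both relations of \eqref{eq:cell_system_twist} drop out as above, and $c$ is a cell twist.
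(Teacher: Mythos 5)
Your proposal proves the wrong implication. In the paper the relations \eqref{eq:cell_system_twist} are part of the \emph{hypothesis}: the proposition sits immediately after the definition of a cell system, so $c$ is already assumed to be a cell system (left invertible and satisfying \eqref{eq:cell_system_twist}), and the added assumption is two-sided invertibility. What has to be \emph{proved} is that $j=c^{-1}*_{\otimes}c^{-1}$ is a twist for the quasi-unique connecting system of Lemma \ref{lem:cell_lemma_connecting_system}, i.e.\ that there exist right invertible $q,q_1,q_2$ satisfying the factorization \eqref{twist_factorization} and the relations \eqref{eq:quntwist12}, \eqref{eq:quntwist23}, \eqref{eq:qutwist_q12}, \eqref{eq:qutwist_q23}. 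The paper's proof does exactly this: it takes $q$ to be the $n$-fold $*_{\otimes}$-power of $c^{-1}$ along the path from the initial point $*_2$, $q_1$ the $(n-3)$-fold power and $q_2=c^{-1}*_{\otimes}c^{-1}*_{\otimes}c^{-1}$, notes that these are right invertible and factorize by Lemma \ref{lem:factorization_invertible}, and then checks the four twist relations using \eqref{eq:cell_system_twist} together with invertibility. You instead set out to \emph{derive} \eqref{eq:cell_system_twist} from invertibility alone, which is impossible: those relations encode the weight-zero/intertwining property and are a genuine extra condition on $c$; a generic invertible $c$ will not satisfy them.

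The route you propose is also circular even on its own terms. The weight-zero relation \eqref{eq:graph_further_relation} of subsection \ref{sub:weight_zero_relation} is extracted from the $RCC$ relation \eqref{eq:RCC}, which presupposes that $C$ is an intertwiner between two Yang--Baxter operators $\check{R}_1$ and $\check{R}_2$. Here $\check{R}_2$ is only \emph{defined} by the conjugation formula \eqref{eq:cell_twist_R}; that it is a Yang--Baxter operator is the conclusion the twist machinery is supposed to deliver, so you cannot ``grant that $C$ intertwines $\check{R}_1$ with $\check{R}_2$'' as an intermediate step. You acknowledge this yourself when you write that the required frame-independence ``cannot be obtained from invertibility alone''---that admission is precisely the gap. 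The fix is to reverse the logical direction: take \eqref{eq:cell_system_twist} as given, and use it together with the invertibility of $c$ (hence of $q$, $q_1$, $q_2$ via Lemma \ref{lem:factorization_invertible}) to verify the twist conditions for the explicit $q$, $q_1$, $q_2$ above.
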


\begin{proof}
As in lemma \ref{lem:cell_lemma_connecting_system}, there always exists a quasi-unique connecting system, the natural choice for any $\gamma$ is the path starting from the initial point $*_2$ to the point $\gamma'$, suppose that the length is $n$, then $q$ is $n$ times $*_{\otimes}$ of $c^{-1}$,  and the corresponding $q_1$ is $(n-3)$ times $*_{\otimes}$ of $c^{-1}$ and $q_2=c^{-1}*_{\otimes}c^{-1}*_{\otimes}c^{-1}$ as in the definition of quasi-unique twist and the relation \eqref{eq:quntwist12},\eqref{eq:quntwist23}, \eqref{eq:qutwist_q12} and \eqref{eq:qutwist_q23} follows from \eqref{eq:cell_system_twist} and the invertibility of c.
\end{proof}

\begin{remark}
From the definition of cell system, for example in \cite{Roche1990}, the unitary condition in \cite{Roche1990} only implies that $c$ is left invertible, but in order to check that it is indeed a twist, we need to check the right invertiblity.
\end{remark}

\subsection{Twist $A_{2L-3}$ elliptic  $\check{R}$ matrix to elliptic $D_{L}$}
In this subsection, we describe the cell twist between the type $A_{2L-3}$ and $D_{L}$. The example is a reformulation of Roche's example in \cite{Roche1990} and also the non-critical Fendley--Ginsparg orbifold constructions in \cite{Fendley1989}.

The dynamical $\check{R}$ matrix on the Dynkin diagram $A_{2L-3}$ is defined by restricting the dynamical $\check{R}$ matrix on the unrestricted groupoid \eqref{eq:graph_unres_A}.

Fix two complex numbers $\tau$ such that $\rm{Im} \tau>0$ and $\frac{1}{2L-2}\notin \Z+\tau\Z, L\ge 2$, let
\begin{equation}
	\theta(z,\tau)=-\sum_{n\in \Z}e^{i\pi(n+\frac{1}{2})^2\tau+2\pi i(n+\frac{1}{2})(z+\frac{1}{2})}
\end{equation}
be the odd Jacobi theta function and $[z]=\theta( z/(2L-2),\tau)/(\theta^{'}(0,\tau)/(2L-2))$ is normalized to have derivative 1 at $z=0$.

We first define the unrestricted case, the action groupoid is $\pi_A^{\text{unres}}=(\Z+b)\rtimes \Z$, where $\Z$ acts on $\Z$ by translation, $b\in \R$ is a generic shift to avoid the singularity of theta function, graphically it is the infinite length chain \eqref{eq:graph_unres_A}.
\begin{equation}\label{eq:graph_unres_A}
\begin{tikzpicture}
	\draw (-3,0) node{\large $\pi^{\text{unres}}_A:$};
	\draw (-1,0)--(0,0)--(1,0)--(2,0)--(3,0)--(4,0)--(5,0)--(6,0);
	\node at (-1,0)[circle,fill,inner sep=1.5pt]{};
	\draw (-1,-0.3) node{\small $\dots$};
	\node at (0,0)[circle,fill,inner sep=1.5pt]{};
	\draw (0,-0.3) node{\small $1+b$};
	\node at (1,0)[circle,fill,inner sep=1.5pt]{};
	\draw (1,-0.3) node{\small $2+b$};
	\node at (2,0)[circle,fill,inner sep=1.5pt]{};
	\draw (2,-0.3) node{\small $\dots$};
	\node at (3,0)[circle,fill,inner sep=1.5pt]{};
	\node at (4,0)[circle,fill,inner sep=1.5pt]{};
	\node at (5,0)[circle,fill,inner sep=1.5pt]{};
	\draw (5,-0.3) node{\small $n+b$};
	\node at (6,0)[circle,fill,inner sep=1.5pt]{};
	\draw (6,-0.3) node{\small $\dots$};
\end{tikzpicture}
\end{equation}

The corresponding groupoid graded vector space is one dimension for each oriented edge.
\begin{equation}
	V^{\pi^{\text{unres}}_A}_{(a,-1)}=\C e_{(a,-1)},V^{\pi^{\text{unres}}_A}_{(a,+1)}=\C e_{(a,+1)},\forall a\in \rm{Ob}(\pi^{\text{unres}}_A) \\
\end{equation} 

Then we have the following isomorphism of the source fiber space for any $a\in \rm{Ob}(\pi^{\rm{unres}})$
\begin{equation}\label{eq:fiberspace}
	\oplus_{\alpha\in s^{-1}(a)}(V^{\pi^{\text{unres}}_A}\otimes V^{\pi^{\text{unres}}_A})_{\alpha}\cong \C^4
\end{equation}
by identifying $e_{(a,+1)}\otimes e_{(a+1,+1)}$ with $(1,0)\otimes (1,0)$, $e_{(a,+1)}\otimes e_{(a+1,-1)}$ with $(1,0)\otimes (0,1)$,$e_{(a,-1)}\otimes e_{(a-1,+1)}$ with $(0,1)\otimes (1,0)$ and $e_{(a,-1)}\otimes e_{(a-1,-1)}$ with $(0,1)\otimes (0,1)$.

With the identification \eqref{eq:fiberspace}, if we let $E_{ij}$ be the $2\times 2$ matrix unit such that $E_{ij}e_k=\delta_{jk}e_i$ for all $k\in \{1,2\}$, then Felder's elliptic dynamical \cite{Felder1994} $R$ matrix with Andrews--Baxter--Forrester \cite{Andrews1984} parametrization is
\begin{equation}\label{eq:ellp_A_Rmatrix}
\begin{split}
	\check{R}^{\text{ell}}_A(z,a)&=\sum_{i=1}^{2}E_{ii}\otimes E_{ii}+\frac{\sqrt{[a-1][a+1]}[z]}{[a][1-z]}E_{21}\otimes E_{12}+\frac{\sqrt{[a+1][a-1]}[z]}{[a][1-z]}E_{12}\otimes E_{21}
\\
+&\frac{[a+z][1]}{[a][1-z]}E_{11}\otimes E_{22}+\frac{[a-z][1]}{[a][1-z]}E_{22}\otimes E_{11}
\end{split}
\end{equation} 

Now set $b=0$ and restrict to the following groupoid, we get our elliptic $\check{R}$ matrix on Dynkin diagram $A_{2L-3}$. In the restricted case, we consider only the subgroupoid $\pi^{\text{unres}}_A$ which is the full subgroupoid of $\Z\rtimes \Z$ support on the set $P^{2L-3}_{++}:=\{\lambda|\lambda\in \Z,1\le \lambda\le 2L-3\}$, graphically it is simply \eqref{graph_A} and $1*$ is the initial point required in \eqref{graph_A}.
\begin{equation}\label{graph_A}
\begin{tikzpicture}
	\draw (-1,0) node{\large $\pi_A:$};
	\draw (0,0)--(1,0)--(2,0)--(3,0)--(4,0)--(5,0)--(6,0)--(7,0);
	\node at (0,0)[circle,fill,inner sep=1.5pt]{};
	\draw (0,-0.3) node{\small $1*$};
	\node at (1,0)[circle,fill,inner sep=1.5pt]{};
	\draw (1,-0.3) node{\small $2$};
	\node at (2,0)[circle,fill,inner sep=1.5pt]{};
	\draw (2,-0.3) node{\small $\dots$};
	\node at (3,0)[circle,fill,inner sep=1.5pt]{};
	\draw (3,-0.3) node{\small $L-2$};
	\node at (4,0)[circle,fill,inner sep=1.5pt]{};
	\draw (4,-0.3) node{\small $L-1$};
	\node at (5,0)[circle,fill,inner sep=1.5pt]{};
	\draw (5,-0.3) node{\small $L$};
	\node at (6,0)[circle,fill,inner sep=1.5pt]{};
	\draw (6,-0.3) node{\small $\dots$};
	\node at (7,0)[circle,fill,inner sep=1.5pt]{};
	\draw (7,-0.3) node{\small $2L-3$};
\end{tikzpicture}
\end{equation}

Then we define the groupoid vector space $V^{\pi_A}$ to be 1 dimension on each component,
\begin{align*}
	&V^{\pi_A}_{(a,-1)}=\C e_{(a,-1)},~\rm{if}~ a,a-1\in P^{2L-3}_{++}\\
	&V^{\pi_A}_{(a,+1)}=\C e_{(a,+1)},~\rm{if}~ a,a+1\in P^{2L-3}_{++}
\end{align*} 

In this case, we do not have the isomorphism \eqref{eq:fiberspace} for all $a\in \pi^{\rm{res}}$, because there are boundary points, the source fibers of boundary fibers are different from that of middle points. And these source fibers can all be seen as subspaces of the unrestricted one, by the corrollary 4.2 of the \cite{Felder2020}, the elliptic $\check{R}(z,a)$ behaves well in these subspaces.

For the type $D$ side, the groupoid $\pi_D$ is the groupoid based on the unoriented Dynkin diagram $D_{L}$. The groupoid structure is defined in the example \ref{ex:graph}, graphically it is \eqref{eq:graph_D_L}, here $1*$ is the initial point required by \ref{eq:initial_point}. And for each oriented edge, we define a one dimensional vector space this gives our $V^{\pi_D}$.
\begin{equation}\label{eq:graph_D_L}
\begin{tikzpicture}
	\draw (-1,-2) node{\large $\pi_D:$};
	\draw (1,-2)--(2,-2)--(3,-2)--(4,-2)--(5,-2)--(6,-1.5);\draw (5,-2)--(6,-2.5);
	\node at (1,-2)[circle,fill,inner sep=1.5pt]{};
	\node at (2,-2)[circle,fill,inner sep=1.5pt]{};
	\node at (3,-2)[circle,fill,inner sep=1.5pt]{};
	\node at (4,-2)[circle,fill,inner sep=1.5pt]{};
	\node at (5,-2)[circle,fill,inner sep=1.5pt]{};
	\node at (6,-1.5)[circle,fill,inner sep=1.5pt]{};
	\node at (6,-2.5)[circle,fill,inner sep=1.5pt]{};
	\draw (1,-2.3) node{\small $1*$};
	\draw (2,-2.3) node{\small $2$};
	\draw (3,-2.3) node{\small $3$};
	\draw (4,-2.3) node{\small $\dots$};
	\draw (5,-2.3) node{\small $L-2$};
	\draw (6.3,-1.5) node{\small $L$};
	\draw (6.5,-2.5) node{\small $L-1$};
\end{tikzpicture}
\end{equation}

We now define the connecting groupoid for the $\pi_A$ and $\pi_D$, it is graphically the following:
\begin{equation}
\begin{tikzpicture}
\draw (-1,-4) node{\large $\pi:$};
\draw (1,-3.5)--(2,-4)node[sloped,pos=0.5,allow upside down]{\arrowIn};
\draw (1,-4.5)--(2,-4)node[sloped,pos=0.5,allow upside down]{\arrowIn};
\node at (1,-3.5)[circle,fill,inner sep=1.5pt]{};
\node at (2,-4)[circle,fill,inner sep=1.5pt]{};
\node at (1,-4.5)[circle,fill,inner sep=1.5pt]{};
\draw (2.5,-4) node{\small $(1_{*})_D,$};
\draw (0.5,-3.5) node{\small $(1_{*})_A$};
\draw (0.2,-4.5) node{\small $(2L-3)_A$};
\draw (3.5,-4) node{\large$\dots$};
\draw (5,-3.5)--(6,-4)node[sloped,pos=0.5,allow upside down]{\arrowIn};
\draw (5,-4.5)--(6,-4)node[sloped,pos=0.5,allow upside down]{\arrowIn};
\node at (5,-3.5)[circle,fill,inner sep=1.5pt]{};
\node at (6,-4)[circle,fill,inner sep=1.5pt]{};
\node at (5,-4.5)[circle,fill,inner sep=1.5pt]{};
\draw (6.7,-4) node{\small $(L-2)_D,$};
\draw (4.3,-3.5) node{\small $(L-2)_A$};
\draw (4.5,-4.5) node{\small $(L)_A$};
\draw (8.7,-4)--(9.7,-3.5)node[sloped,pos=0.5,allow upside down]{\arrowIn};
\draw (8.7,-4)--(9.7,-4.5)node[sloped,pos=0.5,allow upside down]{\arrowIn};
\node at (8.7,-4)[circle,fill,inner sep=1.5pt]{};
\node at (9.7,-3.5)[circle,fill,inner sep=1.5pt]{};
\node at (9.7,-4.5)[circle,fill,inner sep=1.5pt]{};
\draw (8.0,-4) node{\small $(L-1)_A$};
\draw (10.4,-3.5) node{\small $(L-1)_D$};
\draw (10.2,-4.5) node{\small $(L)_D$};
\end{tikzpicture}
\end{equation}

We now describe the homomorphism $\widetilde{C}\in \Gamma(\pi\times \pi,\overline{\operatorname{Hom}}(V^{\pi_A},V^{\pi_D}))$. For $i=1,\dots,L-3$ and $j=0,1,\dots,L-4$, we have the following coefficients of the $\tilde{C}$ with respect to the chosen basis vector of each one dimensional component.
\begin{equation*}
\begin{tikzpicture}[scale=1]
	\draw (0,1)--(0,0)node[sloped,pos=0.5,allow upside down]{\arrowIn}--(1,0)node[sloped,pos=0.5,allow upside down]{\arrowIn};
	\draw (0,1)--(1,1)node[sloped,pos=0.5,allow upside down]{\arrowIn}--(1,0)node[sloped,pos=0.5,allow upside down]{\arrowIn};	
	\node at  (0,0)[circle,fill=black,inner sep=1pt]{};
	\node at  (1,0)[circle,fill=black,inner sep=1pt]{};
	\node at  (0,1)[circle,fill=black,inner sep=1pt]{};
	\node at  (1,1)[circle,fill=black,inner sep=1pt]{};
	\node at (-0.3,1.3){\small$(i+1)_A$};
	\node at (-0.3,-0.3){\small$(i)_A$}; 
	\node at (1.4,1.3){\small $(i+1)_D$};
	\node at (1.4,-0.3){\small$(i)_D$};
	\node at (2.0,0.5){$=$};
	\draw (3,1)--(3,0)node[sloped,pos=0.5,allow upside down]{\arrowIn}--(4,0)node[sloped,pos=0.5,allow upside down]{\arrowIn};
	\draw (3,1)--(4,1)node[sloped,pos=0.5,allow upside down]{\arrowIn}--(4,0)node[sloped,pos=0.5,allow upside down]{\arrowIn};	
	\node at  (3,0)[circle,fill=black,inner sep=1pt]{};
	\node at  (4,0)[circle,fill=black,inner sep=1pt]{};
	\node at  (3,1)[circle,fill=black,inner sep=1pt]{};
	\node at  (4,1)[circle,fill=black,inner sep=1pt]{};
	\node at (2.7,1.3){\small$(i)_A$};
	\node at (2.7,-0.3){\small$(i+1)_A$}; 
	\node at (4.3,1.3){\small$(i)_D$};
	\node at (4.3,-0.3){\small$(i+1)_D$};
	\node at (5.5,0.5){$=$};
	\draw (6.5,1)--(6.5,0)node[sloped,pos=0.5,allow upside down]{\arrowIn}--(7.5,0)node[sloped,pos=0.5,allow upside down]{\arrowIn};
	\draw (6.5,1)--(7.5,1)node[sloped,pos=0.5,allow upside down]{\arrowIn}--(7.5,0)node[sloped,pos=0.5,allow upside down]{\arrowIn};	
	\node at  (6.5,0)[circle,fill=black,inner sep=1pt]{};
	\node at  (7.5,0)[circle,fill=black,inner sep=1pt]{};
	\node at  (6.5,1)[circle,fill=black,inner sep=1pt]{};
	\node at  (7.5,1)[circle,fill=black,inner sep=1pt]{};
	\node at (6,1.3){\small$(2L-2-j)_A$};
	\node at (6,-0.3){\small$(2L-3-j)_A$}; 
	\node at (8,1.3){\small$(i+2)_D$};
	\node at (8,-0.3){\small$(i+1)_D$};
	\node at (9,0.5){$=$};
	\draw (10.5,1)--(10.5,0)node[sloped,pos=0.5,allow upside down]{\arrowIn}--(11.5,0)node[sloped,pos=0.5,allow upside down]{\arrowIn};
	\draw (10.5,1)--(11.5,1)node[sloped,pos=0.5,allow upside down]{\arrowIn}--(11.5,0)node[sloped,pos=0.5,allow upside down]{\arrowIn};	
	\node at  (10.5,0)[circle,fill=black,inner sep=1pt]{};
	\node at  (11.5,0)[circle,fill=black,inner sep=1pt]{};
	\node at  (10.5,1)[circle,fill=black,inner sep=1pt]{};
	\node at  (11.5,1)[circle,fill=black,inner sep=1pt]{};
	\node at (10,1.3){\small$(2L-3-j)_A$};
	\node at (10,-0.3){\small$(2L-2-j)_A$}; 
	\node at (12,1.3){\small$(i+1)_D$};
	\node at (12,-0.3){\small$(i+2)_D$};
	\node at (12,0.5){$=1$};
\end{tikzpicture}
\end{equation*}

\begin{equation*}
\begin{tikzpicture}[scale=1]
	\draw (0,1)--(0,0)node[sloped,pos=0.5,allow upside down]{\arrowIn}--(1,0)node[sloped,pos=0.5,allow upside down]{\arrowIn};
	\draw (0,1)--(1,1)node[sloped,pos=0.5,allow upside down]{\arrowIn}--(1,0)node[sloped,pos=0.5,allow upside down]{\arrowIn};	
	\node at  (0,0)[circle,fill=black,inner sep=1pt]{};
	\node at  (1,0)[circle,fill=black,inner sep=1pt]{};
	\node at  (0,1)[circle,fill=black,inner sep=1pt]{};
	\node at  (1,1)[circle,fill=black,inner sep=1pt]{};
	\node at (-0.3,1.3){\small$(L-1)_A$};
	\node at (-0.3,-0.3){\small$(L-2)_A$}; 
	\node at (1.4,1.3){\small $(L-1)_D$};
	\node at (1.4,-0.3){\small$(L-2)_D$};
	\node at (1.5,0.5){$=\frac{1}{\sqrt{2}}$};
	\draw (3.2,1)--(3.2,0)node[sloped,pos=0.5,allow upside down]{\arrowIn}--(4.2,0)node[sloped,pos=0.5,allow upside down]{\arrowIn};
	\draw (3.2,1)--(4.2,1)node[sloped,pos=0.5,allow upside down]{\arrowIn}--(4.2,0)node[sloped,pos=0.5,allow upside down]{\arrowIn};	
	\node at  (3.2,0)[circle,fill=black,inner sep=1pt]{};
	\node at  (4.2,0)[circle,fill=black,inner sep=1pt]{};
	\node at  (3.2,1)[circle,fill=black,inner sep=1pt]{};
	\node at  (4.2,1)[circle,fill=black,inner sep=1pt]{};
	\node at (2.9,1.3){\small$(L-1)_A$};
	\node at (2.9,-0.3){\small$(L-2)_A$}; 
	\node at (4.5,1.3){\small$(L)_D$};
	\node at (4.5,-0.3){\small$(L-2)_D$};
	\node at (4.8,0.5){$=-\frac{1}{\sqrt{2}}$};
	\draw (6.7,1)--(6.7,0)node[sloped,pos=0.5,allow upside down]{\arrowIn}--(7.7,0)node[sloped,pos=0.5,allow upside down]{\arrowIn};
	\draw (6.7,1)--(7.7,1)node[sloped,pos=0.5,allow upside down]{\arrowIn}--(7.7,0)node[sloped,pos=0.5,allow upside down]{\arrowIn};	
	\node at  (6.7,0)[circle,fill=black,inner sep=1pt]{};
	\node at  (7.7,0)[circle,fill=black,inner sep=1pt]{};
	\node at  (6.7,1)[circle,fill=black,inner sep=1pt]{};
	\node at  (7.7,1)[circle,fill=black,inner sep=1pt]{};
	\node at (6.2,1.3){\small$(L-1)_A$};
	\node at (6.2,-0.3){\small$(L)_A$}; 
	\node at (8.2,1.3){\small$(L-1)_D$};
	\node at (8.2,-0.3){\small$(L-2)_D$};
	\node at (8.4,0.5){$=\frac{1}{\sqrt{2}}$};
	\draw (10.5,1)--(10.5,0)node[sloped,pos=0.5,allow upside down]{\arrowIn}--(11.5,0)node[sloped,pos=0.5,allow upside down]{\arrowIn};
	\draw (10.5,1)--(11.5,1)node[sloped,pos=0.5,allow upside down]{\arrowIn}--(11.5,0)node[sloped,pos=0.5,allow upside down]{\arrowIn};	
	\node at  (10.5,0)[circle,fill=black,inner sep=1pt]{};
	\node at  (11.5,0)[circle,fill=black,inner sep=1pt]{};
	\node at  (10.5,1)[circle,fill=black,inner sep=1pt]{};
	\node at  (11.5,1)[circle,fill=black,inner sep=1pt]{};
	\node at (10,1.3){\small$(L-1)_A$};
	\node at (10,-0.3){\small$(L)_A$}; 
	\node at (12,1.3){\small$(L)_D$};
	\node at (12,-0.3){\small$(L-2)_D$};
	\node at (12,0.5){$=\frac{1}{\sqrt{2}}$};
\end{tikzpicture}
\end{equation*}

\begin{equation*}
\begin{tikzpicture}[scale=1]
	\draw (0,1)--(0,0)node[sloped,pos=0.5,allow upside down]{\arrowIn}--(1,0)node[sloped,pos=0.5,allow upside down]{\arrowIn};
	\draw (0,1)--(1,1)node[sloped,pos=0.5,allow upside down]{\arrowIn}--(1,0)node[sloped,pos=0.5,allow upside down]{\arrowIn};	
	\node at  (0,0)[circle,fill=black,inner sep=1pt]{};
	\node at  (1,0)[circle,fill=black,inner sep=1pt]{};
	\node at  (0,1)[circle,fill=black,inner sep=1pt]{};
	\node at  (1,1)[circle,fill=black,inner sep=1pt]{};
	\node at (-0.3,1.3){\small$(L-2)_A$};
	\node at (-0.3,-0.3){\small$(L-1)_A$}; 
	\node at (1.4,1.3){\small $(L-2)_D$};
	\node at (1.4,-0.3){\small$(L-1)_D$};
	\node at (1.5,0.5){$=1$};
	\draw (3.2,1)--(3.2,0)node[sloped,pos=0.5,allow upside down]{\arrowIn}--(4.2,0)node[sloped,pos=0.5,allow upside down]{\arrowIn};
	\draw (3.2,1)--(4.2,1)node[sloped,pos=0.5,allow upside down]{\arrowIn}--(4.2,0)node[sloped,pos=0.5,allow upside down]{\arrowIn};	
	\node at  (3.2,0)[circle,fill=black,inner sep=1pt]{};
	\node at  (4.2,0)[circle,fill=black,inner sep=1pt]{};
	\node at  (3.2,1)[circle,fill=black,inner sep=1pt]{};
	\node at  (4.2,1)[circle,fill=black,inner sep=1pt]{};
	\node at (2.9,1.3){\small$(L-2)_A$};
	\node at (2.9,-0.3){\small$(L-1)_A$}; 
	\node at (4.5,1.3){\small$(L-2)_D$};
	\node at (4.5,-0.3){\small$(L)_D$};
	\node at (4.8,0.5){$=-1$};
	\draw (6.7,1)--(6.7,0)node[sloped,pos=0.5,allow upside down]{\arrowIn}--(7.7,0)node[sloped,pos=0.5,allow upside down]{\arrowIn};
	\draw (6.7,1)--(7.7,1)node[sloped,pos=0.5,allow upside down]{\arrowIn}--(7.7,0)node[sloped,pos=0.5,allow upside down]{\arrowIn};	
	\node at  (6.7,0)[circle,fill=black,inner sep=1pt]{};
	\node at  (7.7,0)[circle,fill=black,inner sep=1pt]{};
	\node at  (6.7,1)[circle,fill=black,inner sep=1pt]{};
	\node at  (7.7,1)[circle,fill=black,inner sep=1pt]{};
	\node at (6.2,1.3){\small$(L)_A$};
	\node at (6.2,-0.3){\small$(L-1)_A$}; 
	\node at (8.2,1.3){\small$(L-2)_D$};
	\node at (8.2,-0.3){\small$(L-1)_D$};
	\node at (8.4,0.5){$=1$};
	\draw (10.5,1)--(10.5,0)node[sloped,pos=0.5,allow upside down]{\arrowIn}--(11.5,0)node[sloped,pos=0.5,allow upside down]{\arrowIn};
	\draw (10.5,1)--(11.5,1)node[sloped,pos=0.5,allow upside down]{\arrowIn}--(11.5,0)node[sloped,pos=0.5,allow upside down]{\arrowIn};	
	\node at  (10.5,0)[circle,fill=black,inner sep=1pt]{};
	\node at  (11.5,0)[circle,fill=black,inner sep=1pt]{};
	\node at  (10.5,1)[circle,fill=black,inner sep=1pt]{};
	\node at  (11.5,1)[circle,fill=black,inner sep=1pt]{};
	\node at (10,1.3){\small$(L)_A$};
	\node at (10,-0.3){\small$(L-1)_A$}; 
	\node at (12,1.3){\small$(L-2)_D$};
	\node at (12,-0.3){\small$(L)_D$};
	\node at (12,0.5){$=1$};
\end{tikzpicture}
\end{equation*}

And  $(\widetilde{C})^{-1}\in \Gamma(\pi\times \pi,\overline{\operatorname{Hom}}(V^{\pi_D},V^{\pi_A}))$ is described by
\begin{equation}
\begin{tikzcd}
		\bullet_{d_1}\arrow[d,"V^{\pi_D}_{\gamma}"']&\bullet_{a_1}\arrow[l,"\beta_2"']\arrow[d,"V^{\pi_A}_\alpha"]\\
		\bullet_{d_2}&\arrow[l,"\beta_1"]\bullet_{a_2}
\end{tikzcd}
=~
\begin{tikzcd}
\bullet_{a_1}\arrow[d,"V^{\pi_A}_{\alpha}"']\arrow[r,"\beta_1"]&\bullet_{d_1}\arrow[d,"V^{\pi_D}_{\gamma}"]\\
\bullet_{a_2}\arrow[r,"\beta_2"]&\bullet_{d_2}
\end{tikzcd}
\end{equation}

\begin{prop}[Roche \cite{Roche1990}]
$\tilde{C}$ is a cell system
\end{prop}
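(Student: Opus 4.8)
The plan is to verify directly the two defining relations in \eqref{eq:cell_system_twist}, exploiting that every homogeneous component of $V^{\pi_A}$, $V^{\pi_D}$ and $V^{\pi}$ is one--dimensional, so that after choosing the distinguished basis vectors each instance of \eqref{eq:cell_system_twist} collapses to a scalar identity. Concretely, $j=\widetilde C^{-1}*_{\otimes}\widetilde C^{-1}$ and $j^{-1}=\widetilde C*_{\otimes}\widetilde C$ contribute products of the explicit cell coefficients tabulated above, $\check R_1=\check R^{\text{ell}}_A$ contributes the Boltzmann weights \eqref{eq:ellp_A_Rmatrix}, and each summand of $\sum_{\beta}j^{-1}(\beta,\beta_i)\check R_1 j(\beta_1,\beta)$ is the scalar obtained by multiplying two cells of $\widetilde C$, one weight of $\check R_A$, and two cells of $\widetilde C^{-1}$ along the internal path $\beta$. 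Before this I would confirm that $\widetilde C$ is left invertible (indeed invertible) with the stated $\widetilde C^{-1}$: since the cells are diagonal (equal to $1$) away from the fork and form the orthogonal block $\tfrac1{\sqrt2}\begin{pmatrix}1&1\\1&-1\end{pmatrix}$ at the nodes $L-2,L-1,L$, the relation $\widetilde C^{-1}*_{\circ}\widetilde C=\operatorname{id}$ reduces to the termwise scalar orthogonality of these cells.

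I would then organize the verification by the position of the $\pi_2=\pi_D$ vertex $s_{\pi_D}(\gamma)$ relative to the fork of $D_L$, using that the connecting groupoid $\pi$ realizes the folding of $A_{2L-3}$ under the reflection $i\mapsto 2L-2-i$. For the first relation of \eqref{eq:cell_system_twist} the two admissible $\pi$--arrows $\beta_1,\beta_1'$ into a bulk $D$--vertex $d=i$ have $A$--sources $i$ and $2L-2-i$, so that relation is exactly the assertion that the transported weight built from $\check R_A(z,i)$ agrees with the one built from $\check R_A(z,2L-2-i)$. For the second relation the source $s_{\pi}(\beta_1)$ is an $A$--vertex, and the only $A$--vertex carrying two distinct connecting edges is $(L-1)_A$; hence that relation is non--trivial only at the fork, where it demands that the cross--term between the edges $(L-1)_A\to(L-1)_D$ and $(L-1)_A\to(L)_D$ vanish after transport.

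In the bulk the cells all equal $1$, while the reflection $i\mapsto 2L-2-i$ interchanges the $+$ and $-$ edges. Under $a\mapsto 2L-2-a$, the oddness $[-z]=-[z]$ and the quasi--periodicity of $[z]=\theta(z/(2L-2),\tau)/(\theta'(0,\tau)/(2L-2))$ (quasi--period $2L-2$) send $[a\pm z]\mapsto -c_a[a\mp z]$ and $[a]\mapsto -c_a[a]$ for a common factor $c_a$, so each ratio $\tfrac{[a\pm z][1]}{[a][1-z]}$ is carried to $\tfrac{[a\mp z][1]}{[a][1-z]}$; this is precisely the effect of interchanging $E_{11}\otimes E_{22}$ with $E_{22}\otimes E_{11}$, i.e.\ of the $+\leftrightarrow-$ relabelling induced by the reflection, so $\check R_A$ is invariant and the first relation of \eqref{eq:cell_system_twist} holds for bulk $\gamma$. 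The genericity hypothesis $\tfrac1{2L-2}\notin\Z+\tau\Z$ together with the shift $b$ keeps every denominator $[a]$ non--zero throughout.

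The fork case is where the content lies and is the main obstacle. At $s_{\pi}(\beta_1)=(L-1)_A$ the weights are evaluated at the fixed vertex $a=L-1$, at which the reflection forces $[L]=[L-2]$, so that $\sqrt{[a-1][a+1]}=\sqrt{[L-2][L]}=[L-2]$ and the off--diagonal amplitudes become symmetric; combined with the signs $\pm\tfrac1{\sqrt2}$ of the fork cells, the transported cross--term of the second relation of \eqref{eq:cell_system_twist} telescopes to a difference of equal theta products and vanishes, while for the first relation the diagonal contributions add, through relations of $\tfrac12\big([a+z][1]+[a-z][1]\big)$ type, to the single folded $D$--weight. The delicate points I expect to consume most of the effort are the correct tracking of the quasi--periodicity sign in $[2L-2-a]$, the consistent choice of branch in $\sqrt{[a-1][a+1]}$ across the reflection, and the precise cancellation of the $-\tfrac1{\sqrt2}$ contributions at the three fork cells; these are exactly the identities established by Roche \cite{Roche1990} and, in orbifold language, by Fendley--Ginsparg \cite{Fendley1989}, so the remaining task is to check that the present groupoid reformulation, with $\check R_2$ defined by \eqref{eq:cell_twist_R}, reproduces them verbatim.
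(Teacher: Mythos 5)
The paper does not actually prove this proposition; it is stated as imported from Roche \cite{Roche1990} with no argument given, so there is no internal proof to compare against. Your outline identifies the correct mechanism — the folding of $A_{2L-3}$ under $i\mapsto 2L-2-i$, the identity $[2L-2-a]=[a]$ (hence $[L]=[L-2]$ and $[L-1+z]=[L-1-z]$ at the fixed vertex), the observation that the second relation of \eqref{eq:cell_system_twist} is non-trivial only at $(L-1)_A$ since that is the unique $A$-vertex with two connecting edges, and the diagonalization of the resulting symmetric $2\times2$ block by the $\pm\tfrac{1}{\sqrt2}$ fork cells. This is exactly the Fendley--Ginsparg/Roche orbifold computation, and the reductions you make (one-dimensional components, so everything is scalar) are the right ones.

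Two caveats. First, your bulk/fork dichotomy is slightly too coarse: the first relation of \eqref{eq:cell_system_twist} at a vertex $\gamma$ with $s_{\pi_D}(\gamma)$ within two steps of the fork involves length-$2$ blocks of $j=\widetilde C^{-1}*_\otimes\widetilde C^{-1}$ that mix trivial cells with the $\pm\tfrac1{\sqrt2}$ and $\pm1$ fork cells, so "in the bulk the cells all equal $1$" does not dispose of those cases; they need the same cancellation analysis as the fork itself. Relatedly, the tabulated cells for the two orientations across the fork carry different normalizations ($\pm1$ versus $\pm\tfrac1{\sqrt2}$), so the left-invertibility check $\widetilde C^{-1}*_\circ\widetilde C=\operatorname{id}$ is not literal orthogonality of a single unitary block and must be done with the paper's conventions. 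Second, and more importantly, your final paragraph defers precisely the decisive theta-function cancellations back to Roche and Fendley--Ginsparg; since the proposition is itself the statement being imported from Roche, this is circular as an independent verification. As it stands the proposal is a sound and well-targeted plan, but the scalar identities at and near the fork still have to be carried out explicitly for it to count as a proof.
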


\begin{lemma}
	$\widetilde{C}$ is invertible with the inverse $\widetilde{C}^{-1}$.
\end{lemma}

\begin{prop}
$\tilde{C}$ is a cell twist, so from the formula \eqref{eq:cell_system_twist}, we get an $\check{R}^{\rm{ell}}_D$ defined on the space $V^{\pi_D}$.
\end{prop}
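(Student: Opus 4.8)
The plan is to assemble the statement from the three results immediately preceding it, rather than to recompute anything: Roche's proposition that $\widetilde{C}$ is a cell system, the lemma that $\widetilde{C}$ is invertible, and the earlier proposition asserting that an invertible cell system is a cell twist. First I would unwind the definitions to see that almost no new work is required. By the definition of a cell system, Roche's proposition supplies a \emph{left invertible} $\widetilde{C}$ whose associated $j=\widetilde{C}^{-1}*_{\otimes}\widetilde{C}^{-1}$ satisfies the weight-zero (ice-rule) identities \eqref{eq:cell_system_twist}. These are verbatim the relations \eqref{eq:cell_twist} appearing in the definition of a cell-twist; thus the only gap between ``cell system'' and ``cell twist'' is the upgrade from one-sided to two-sided invertibility.

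Next I would invoke the invertibility lemma: $\widetilde{C}$ is invertible with two-sided inverse $\widetilde{C}^{-1}$. This is the crucial extra input, since the cell-system hypothesis (the unitarity condition in the sense of Roche) only guarantees left invertibility, as the remark after the cell-system proposition emphasizes; the right invertibility cannot be skipped. With a genuine two-sided inverse in hand, the proposition ``if $c$ is invertible then $c$ is a cell twist'' applies directly. Concretely, for the quasi-unique connecting system of Lemma \ref{lem:cell_lemma_connecting_system} anchored at $1_{*}$, one takes $q$ to be $\widetilde{C}^{-1}$ fused $n$ times under $*_{\otimes}$, $q_1$ to be $\widetilde{C}^{-1}$ fused $n-3$ times, and $q_2=\widetilde{C}^{-1}*_{\otimes}\widetilde{C}^{-1}*_{\otimes}\widetilde{C}^{-1}$; then the twist relations \eqref{eq:quntwist12}, \eqref{eq:quntwist23}, \eqref{eq:qutwist_q12}, \eqref{eq:qutwist_q23} all follow from \eqref{eq:cell_system_twist} together with invertibility. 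Hence $\widetilde{C}$ is a cell twist.

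To obtain the operator $\check{R}^{\mathrm{ell}}_{D}$ I would then chain the general machinery of Section \ref{sub:intertwiner_from_twist}. Being a cell twist makes $j=\widetilde{C}^{-1}*_{\otimes}\widetilde{C}^{-1}$ a twist for the chosen quasi-unique connecting system. Lemma \ref{lem:cell_lemma_connecting_system} already provides quasi-uniqueness for $\pi_D$, and the explicit picture of the connecting groupoid $\pi$ shows that its arrows have pairwise distinct $(\text{source},\text{target})$ data, so $\pi$ has no multi-edge. The hypotheses of the proposition on quasi-unique connecting systems are therefore met, and it yields that the operator $(\check{R}_2)_{\lhd}$ built from $j$ by \eqref{eq:cell_twist_R} is a genuine Yang--Baxter operator on $V^{\pi_D}$; one sets $\check{R}^{\mathrm{ell}}_{D}:=(\check{R}_2)_{\lhd}$.

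The genuinely hard analytic content, namely the elliptic theta-function identities underlying Roche's cell-system relations \eqref{eq:cell_system_twist} and the right invertibility of $\widetilde{C}$, lives entirely in the two results I am taking as given, so the main obstacle here is only careful bookkeeping: I must ensure that the single two-sided inverse $\widetilde{C}^{-1}$ is used consistently in forming $j^{-1}$, $q$, $q_1$, $q_2$, so that the cancellations of the form $q(\beta_1,\beta')\,q^{-1}(\beta,\beta_1)=\delta_{\beta,\beta'}\,\operatorname{id}$ that drive the Yang--Baxter proof are legitimate. The one easy-to-overlook hypothesis I would flag explicitly is the absence of multi-edges in $\pi$, which I verify directly from the displayed description of the connecting groupoid.
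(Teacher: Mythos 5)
Your proposal is correct and follows exactly the route the paper intends: the paper states this proposition without proof precisely because it is the concatenation of Roche's cell-system proposition, the invertibility lemma, the general proposition that an invertible cell system is a cell twist (with $q$, $q_1$, $q_2$ built as iterated $*_{\otimes}$-powers of $\widetilde{C}^{-1}$ over the quasi-unique connecting system anchored at $1_*$), and the Yang--Baxter proposition for quasi-unique connecting systems. Your explicit flagging of the right-invertibility upgrade and the no-multi-edge hypothesis matches the paper's own remarks and adds nothing incompatible.
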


\subsection{Twist $A_{11}$ trigonometric  $\check{R}$ matrix to trigonometric $E_{6}$} 
\hfill\\
This example is a reformulation of Roche's example of intertwiner in \cite{Roche1990}, see also the work of Francesco--Zuber \cite{DiFrancesco1990}. We got the twist from the intertwiners.

For the restricted dynamical $\check{R}^{\text{ell}}_A$ matrix defined on the vector space $V^{\pi_A}$., we take the trigonometric limit. Let $\langle z\rangle:=\sin(\frac{\pi z}{12})$, then we have the trigonometric $\check{R}^{\rm{tri}}_A$ matrix \eqref{eq:tri_A_Rmatrix} on the $V^{\pi^{\rm{unres}}_A}$, similarly by restriction we get $\check{R}^{\rm{tri}}_A$ on $V^{\pi_A}$.

\begin{equation}\label{eq:tri_A_Rmatrix}
\begin{split}
	\check{R}^{\text{tri}}_A(z,a)&=\sum_{i=1}^{2}E_{ii}\otimes E_{ii}+\frac{\sqrt{\langle a-1\rangle\langle a+1\rangle}\langle z\rangle}{\langle a\rangle\langle1-z\rangle}E_{21}\otimes E_{12}+\frac{\sqrt{\langle a+1\rangle\langle a-1\rangle}\langle z\rangle}{\langle a\rangle\langle 1-z\rangle}E_{12}\otimes E_{21}
\\
+&\frac{\langle a+z\rangle\langle 1\rangle}{\langle a\rangle\langle 1-z\rangle}E_{11}\otimes E_{22}+\frac{\langle a-z\rangle\langle 1\rangle}{\langle a\rangle\langle 1-z\rangle}E_{22}\otimes E_{11}
\end{split}
\end{equation}

The groupoid $\pi_E$ is the following \eqref{graph_E} Dynkin digram of type $E$ and vector space $V^{\pi_E}$ is defined by assigning a one dimensional vector space to each oriented edge.
\begin{equation}\label{graph_E}
	\scalebox{0.7}{
	\begin{tikzpicture}
		\draw (-1,0) node{\large $\pi_E:$};
		\draw (0,0)--(1,0)--(2,0)--(3,0)--(4,0);
		\node at (0,0)[circle,fill,inner sep=1.5pt]{};
		\draw (0,0.3) node{\small $1*$};
		\node at (1,0)[circle,fill,inner sep=1.5pt]{};
		\draw (1,0.3) node{\small $2$};
		\node at (2,0)[circle,fill,inner sep=1.5pt]{};
		\draw (2,0.3) node{\small $3$};
		\draw (2,0)--(2,-1);
		\node at (2,-1)[circle,fill,inner sep=1.5pt]{};
		\draw (2,-1.3) node{\small $6$};
		\node at (3,0)[circle,fill,inner sep=1.5pt]{};
		\draw (3,0.3) node{\small $4$};
		\node at (4,0)[circle,fill,inner sep=1.5pt]{};
		\draw (4,0.3) node{\small $5$};
	\end{tikzpicture}}
\end{equation}

The connecting groupoid $\pi$ is described by the following:
\begin{equation}\label{graph_A_E}
	\scalebox{0.8}{
	\begin{tikzpicture}
		\begin{scope}[shift={(-2,-0.3)}]
		\draw (0,0) node{\large $\pi:$};
		\draw (1,0)--(2,0)node[sloped,pos=0.5,allow upside down]{\arrowIn};
		\draw (3,0)--(2,0)node[sloped,pos=0.5,allow upside down]{\arrowIn};
		\draw (3,0)--(4,0)node[sloped,pos=0.5,allow upside down]{\arrowIn};
		\draw (5,0)--(4,0)node[sloped,pos=0.5,allow upside down]{\arrowIn};
		\draw (5,0)--(6,0)node[sloped,pos=0.5,allow upside down]{\arrowIn};
		\draw (7,0)--(6,0)node[sloped,pos=0.5,allow upside down]{\arrowIn};
		\draw(4,1)--(4,0)node[sloped,pos=0.5,allow upside down]{\arrowIn};
		\draw(4,-1)--(4,0)node[sloped,pos=0.5,allow upside down]{\arrowIn};
		
		\node at (4,1)[circle,fill,inner sep=1.5pt]{};
		\draw (4,1) node[above]{\small $9_A$};
		\node at (4,-1)[circle,fill,inner sep=1.5pt]{};
		\draw (4,-1) node[below]{\small $3_A$};
		\node at (1,0)[circle,fill,inner sep=1.5pt]{};
		\draw (1,-0.3) node{\small $1_A$};
		\node at (2,0)[circle,fill,inner sep=1.5pt]{};
		\draw (2,-0.3) node{\small $1_E$};
		\node at (3,0)[circle,fill,inner sep=1.5pt]{};
		\draw (3,-0.3) node{\small $7_A$};
		\node at (4,0)[circle,fill,inner sep=1.5pt]{};
		\draw (4,0) node[above right]{\small $3_E$};
		\node at (5,0)[circle,fill,inner sep=1.5pt]{};
		\draw (5,-0.3) node{\small $5_A$};
		\node at (6,0)[circle,fill,inner sep=1.5pt]{};
		\draw (6,-0.3) node{\small $5_E$};
		\node at (7,0)[circle,fill,inner sep=1.5pt]{};
		\draw (7,-0.3) node{\small $11_A$};
\end{scope}	
\end{tikzpicture}
\begin{tikzpicture}
	\draw (1,0)--(2,0)node[sloped,pos=0.5,allow upside down]{\arrowIn};
	\draw (3,1)--(2,0)node[sloped,pos=0.5,allow upside down]{\arrowIn};
	\draw (3,1)--(4,0)node[sloped,pos=0.5,allow upside down]{\arrowIn};
	\draw (5,0)--(4,0)node[sloped,pos=0.5,allow upside down]{\arrowIn};
	\draw (2.5,-0.5)--(3,-1)node[sloped,pos=0.5,allow upside down]{\arrowIn};
	\draw (2.5,-0.5)--(2,0)node[sloped,pos=0.5,allow upside down]{\arrowIn};
    \draw (3.5,-0.5)--(3,-1)node[sloped,pos=0.5,allow upside down]{\arrowIn};
    \draw (3.5,-0.5)--(4,0)node[sloped,pos=0.5,allow upside down]{\arrowIn};
	\node at (1,0)[circle,fill,inner sep=1.5pt]{};
	\draw (1,0) node[left]{\small $2_A$};
	\node at (2,0)[circle,fill,inner sep=1.5pt]{};
	\draw (2,0) node[above]{\small $2_E$};
	\node at (3,1)[circle,fill,inner sep=1.5pt]{};
	\draw (3,1) node[above]{\small $6_A$};
	\node at (4,0)[circle,fill,inner sep=1.5pt]{};
	\draw (4,0) node[above]{\small $4_E$};
	\node at (5,0)[circle,fill,inner sep=1.5pt]{};
	\draw (5,0) node[right]{\small $10_A$};
	\node at (2.5,-0.5)[circle,fill,inner sep=1.5pt]{};
	\draw (2.5,-0.5) node[below left]{\small $8_A$};
	\node at (3.5,-0.5)[circle,fill,inner sep=1.5pt]{};
	\draw (3.5,-0.5) node[below right]{\small $4_A$};
	\node at (3,-1)[circle,fill,inner sep=1.5pt]{};
	\draw (3,-1) node[below]{\small $6_E$};	
	\end{tikzpicture}}
\end{equation}

We now describe the homomorphism $\widetilde{C}\in \Gamma(\pi\times \pi,\overline{\operatorname{Hom}}(V^{\pi_A},V^{\pi_E}))$, as the component of $V^{\pi_A}$ and $V^{\pi_E}$ are all one dimensional. It suffice to describe the coefficients of the $\widetilde{C}$ with respect to these one dimensional basis.

We let $\widetilde{C}(a_1,a_2,e_1,e_2)$ to describe the coefficient of the following component \eqref{eq:component_twist}.
\begin{equation}\label{eq:component_twist}
\begin{tikzcd}
	\bullet_{a_1}\arrow[d,"V^{\pi_A}_{\alpha}"']\arrow[r,"\beta_1"]&\bullet_{e_1}\arrow[d,"V^{\pi_E}_{\gamma}"]\\
	\bullet_{a_2}\arrow[r,"\beta_2"]&\bullet_{e_2}
\end{tikzcd}
\end{equation}
then the $\tilde{C}$ is described by
	\begin{equation*}
	\widetilde{C}(1_A,2_A,1_E,2_E)=\widetilde{C}(2_A,1_A,2_E,1_E)=\widetilde{C}(2_A,3_A,2_E,3_E)=1
	\end{equation*}
    \begin{equation*}
    \widetilde{C}(3_A,4_A,3_E,4_E)=\widetilde{C}(3_A,2_A,3_E,2_E)=-\widetilde{C}(3_A,4_A,3_E,4_E)=-\widetilde{C}(4_A,5_A,4_E,5_E)=1
    \end{equation*}
	\begin{equation*}
	\widetilde{C}(4_A,3_A,4_E,3_E)=\widetilde{C}(4_A,5_A,6_E,3_E)=3^{-1/4}
	\end{equation*}
	\begin{equation*}
	-\widetilde{C}(4_A,3_A,6_E,3_E)=\widetilde{C}(4_A,5_A,4_E,3_E)=(1-\frac{1}{\sqrt{3}})^{1/2}
	\end{equation*}
	\begin{equation*}
	\widetilde{C}(5_A,4_A,3_E,6_E)=\widetilde{C}(5_A,6_A,3_E,2_E)=1
	\end{equation*}
	\begin{equation*}
	-\widetilde{C}(5_A,4_A,5_E,4_E)=\widetilde{C}(5_A,6_A,3_E,4_E)=(2\sqrt{3}-3)^{1/2}
	\end{equation*}
	\begin{equation*}
	\widetilde{C}(5_A,4_A,3_E,4_E)=\widetilde{C}(5_A,6_A,5_E,4_E)=\sqrt{3}-1
	\end{equation*}
	\begin{equation*}
	\widetilde{C}(6_A,7_A,2_E,1_E)=\widetilde{C}(6_A,5_A,4_E,5_E)=1
	\end{equation*}
	\begin{equation*}
	\widetilde{C}(6_A,5_A,2_E,3_E)=\widetilde{C}(6_A,7_A,4_E,3_E)=\frac{1}{2}(\sqrt{3}+1)^{1/2}
	\end{equation*}
	\begin{equation*}
	\widetilde{C}(6_A,5_A,4_E,3_E)=-\widetilde{C}(6_A,7_A,2_E,3_E)=\frac{1}{2}(3-\sqrt{3})^{1/2}
	\end{equation*}
	\begin{equation*}
	\widetilde{C}(7_A,6_A,3_E,4_E)=\widetilde{C}(7_A,8_A,3_E,6_E)=1
	\end{equation*}
	\begin{equation*}
	\widetilde{C}(7_A,8_A,1_E,2_E)=-\widetilde{C}(7_A,6_A,3_E,2_E)=(2\sqrt{3}-3)^{1/2}
	\end{equation*}
	\begin{equation*}
	\widetilde{C}(7_A,8_A,3_E,2_E)=\widetilde{C}(7_A,6_A,1_E,2_E)=\sqrt{3}-1
	\end{equation*}
	\begin{equation*}
	\widetilde{C}(8_A,7_A,2_E,1_E)=1,\widetilde{C}(8_A,7_A,6_E,3_E)=\widetilde{C}(8_A,9_A,2_E,3_E)=3^{-1/4}
	\end{equation*}
	\begin{equation*}
	-\widetilde{C}(8_A,9_A,6_E,3_E)=\widetilde{C}(8_A,7_A,2_E,3_E)=\big(1-\frac{1}{\sqrt{3}} \big)^{1/2}
	\end{equation*}
	\begin{equation*}
	\widetilde{C}(9_A,8_A,3_E,2_E)=\widetilde{C}(9_A,10_A,3_E,4_E)=-\widetilde{C}(9_A,8_A,3_E,6_E)=1
	\end{equation*}
	\begin{equation*}
    \widetilde{C}(10_A,9_A,4_E,3_E)=\widetilde{C}(10_A,11_A,4_E,5_E)=\widetilde{C}(11_A,10_A,5_E,4_E)=1
	\end{equation*}

And we also let $(\widetilde{C})^{-1}\in \Gamma(\pi\times \pi,\overline{\operatorname{Hom}}(V^{\pi_E},V^{\pi_A}))$ be described by
\begin{equation}
(\widetilde{C})^{-1}~\rm{coefficent}\quad \begin{tikzcd}
		\bullet_{e_1}\arrow[d,"V^{\pi_E}_{\gamma}"']&\bullet_{a_1}\arrow[l,"\beta_1"']\arrow[d,"V^{\pi_A}_\alpha"]\\
		\bullet_{e_2}&\arrow[l,"\beta_2"]\bullet_{a_2}
	\end{tikzcd}
	=\widetilde{C}~\rm{coefficent}~
	\begin{tikzcd}
		\bullet_{a_1}\arrow[d,"V^{\pi_A}_{\alpha}"']\arrow[r,"\beta_1"]&\bullet_{e_1}\arrow[d,"V^{\pi_E}_{\gamma}"]\\
		\bullet_{a_2}\arrow[r,"\beta_2"]&\bullet_{e_2}
	\end{tikzcd}
\end{equation}

\begin{prop}[Roche \cite{Roche1990}]
	$\tilde{C}$ is a cell system
\end{prop}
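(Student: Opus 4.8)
The plan is to reduce the statement to a finite collection of scalar quadratic identities among the explicitly listed coefficients $\widetilde{C}(a_1,a_2,e_1,e_2)$ and the entries of $\check{R}^{\text{tri}}_A$ in \eqref{eq:tri_A_Rmatrix}, and then verify them face by face on the graph $E_6$. First I would exploit that every homogeneous component of $V^{\pi_A}$ and $V^{\pi_E}$ is one-dimensional: under this hypothesis the equivalence noted in subsection \ref{sub:intertwiner_from_twist} identifies the cell-system conditions \eqref{eq:cell_system_twist} with the weight-zero (ice-rule) relation \eqref{eq:graph_further_relation}, equivalently with the $RCC$ relation \eqref{eq:RCC} for the intertwiner $C$ built from $\widetilde{C}$. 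Thus it suffices to check that $j=\widetilde{C}^{-1}*_{\otimes}\widetilde{C}^{-1}$ conjugates $\check{R}^{\text{tri}}_A$ into an operator that (a) depends only on the $E$-face $\gamma$ and not on the chosen lift $\beta_1$ connecting it to a length-two $A$-path, and (b) vanishes on transitions between distinct connecting edges $\beta_2\ne\beta_1$ with common source.

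Next I would write the two conditions in components. Fixing a length-two path $\gamma$ in $\pi_E$ and a lift $\beta_1$, the twisted operator $\check{R}_2(\beta_2,\beta_1)=\sum_{\beta}j^{-1}(\beta,\beta_2)\check{R}_1 j(\beta_1,\beta)$ expands, since $j$ factorizes as $\widetilde{C}^{-1}*_{\otimes}\widetilde{C}^{-1}$, into a sum over an intermediate $A$-vertex and an intermediate connecting edge $\beta$ of four cell coefficients multiplied by a single entry of the $4\times 4$ matrix $\check{R}^{\text{tri}}_A(z,a)$. The admissible intermediate configurations are governed by the incidence relation $M_1C=CM_2$ of \eqref{eq:initial_point} (here $M_1,M_2$ are the incidence matrices of $A_{11}$ and $E_6$), so for each face only finitely many terms survive. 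I would then organize the faces of $E_6$ into the ``linear'' segments, which lie away from the trivalent node $3_E$, and the ``fork'' region around $3_E$, corresponding to the $A$-labels $4_A,\dots,8_A$ where the branching occurs.

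On the linear segments the surviving cell coefficients are all $\pm 1$, and the required identity reduces term-by-term to the diagonal and off-diagonal entries of $\check{R}^{\text{tri}}_A$; there condition (b) holds because the relevant connecting edge is unique and (a) is immediate. The substantive checks are confined to the fork, where the coefficients $3^{-1/4}$, $(1-\tfrac{1}{\sqrt3})^{1/2}$, $(2\sqrt3-3)^{1/2}$, $\sqrt3-1$, $\tfrac12(\sqrt3+1)^{1/2}$ and $\tfrac12(3-\sqrt3)^{1/2}$ appear. There I would substitute the explicit values of $\langle a\rangle=\sin(\pi a/12)$ at the relevant small integers $a$ and verify the quadratic relations using the standard product-to-sum and half-angle identities for $\sin(\pi/12)$, $\sin(\pi/4)$ and $\sin(\pi/3)$; the constants above are precisely the ratios of Perron--Frobenius weights of $E_6$ forcing the cancellation in (b) and the face-independence in (a).

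The main obstacle is the bookkeeping at the trivalent node: there the number of admissible intermediate paths grows, the matching of $A$-labels to the two $E$-branches $4_E,5_E$ against $6_E$ must be tracked carefully, and both the vanishing (b) and the consistency (a) have to be confirmed simultaneously against the same cell values. A clean way to package these quadratic identities uniformly is the Ocneanu cell (rhombus/type-II) equation, which I would state once and then specialize to each face; the same orthogonality data also yields the invertibility of $\widetilde{C}$ asserted in the following lemma, namely that $\widetilde{C}*_{\circ}\widetilde{C}^{-1}$ and $\widetilde{C}^{-1}*_{\circ}\widetilde{C}$ are identities, which pins down the normalizations and completes the verification that $\widetilde{C}$ is a cell system.
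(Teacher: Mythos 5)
The paper itself offers no proof of this proposition: it is imported verbatim from Roche's work, so there is no argument of the author's to compare yours against. Your overall strategy --- reduce the two conditions of \eqref{eq:cell_system_twist} to a finite list of quadratic identities among the listed coefficients $\widetilde{C}(a_1,a_2,e_1,e_2)$ and the entries of $\check{R}^{\text{tri}}_A$ in \eqref{eq:tri_A_Rmatrix}, then check them face by face --- is the right one, and essentially the only one available, since the statement is a finite numerical assertion. Two caveats, however. First, the proposal stops exactly where the mathematical content begins: none of the quadratic identities is actually verified, and for a claim of this type the verification \emph{is} the proof. Second, your triage of which faces are trivial is inaccurate. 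Condition (a) of \eqref{eq:cell_system_twist} (independence of the chosen lift $\beta_1$) is nontrivial at \emph{every} vertex of $E_6$, because every vertex of $E_6$ has at least two $A_{11}$-preimages in the connecting groupoid ($1_E$ from $1_A,7_A$; $5_E$ from $5_A,11_A$; and so on), so it is never ``immediate'' on the linear segments; moreover non-unimodular coefficients occur away from the trivalent node, e.g.\ $\widetilde{C}(5_A,6_A,5_E,4_E)=\sqrt{3}-1$ sits over the edge between $4_E$ and $5_E$. The accurate statement is that condition (b) is vacuous only at the $A$-vertices with a unique connecting edge ($1_A,2_A,3_A,9_A,10_A,11_A$), while condition (a) must be checked at every $E_6$ face, including those whose lifts land on boundary vertices of $A_{11}$ where the source fibers degenerate.

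A smaller point: the detour through the $RCC$ relation \eqref{eq:RCC} and the ice rule \eqref{eq:graph_further_relation} is legitimate but premature here, since the equivalence of \eqref{eq:further_relation} with \eqref{eq:RCC} relies on the right invertibility of the intertwiner, which you only establish at the end; it is cleaner to check \eqref{eq:cell_system_twist} directly, as the definition of a cell system presupposes only left invertibility (a distinction the paper itself flags in the remark following the cell-twist proposition). With the finite computations actually carried out --- the standard route being, as you indicate, the Ocneanu cell equations expressed through the Perron--Frobenius weights $\sin(\pi a/12)$ of $A_{11}$ and the corresponding data for $E_6$ --- your outline would become a complete proof.
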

\begin{lemma}
	$\widetilde{C}$ is invertible with the inverse $\widetilde{C}^{-1}$.
\end{lemma}
\begin{prop}
	$\tilde{C}$ is a cell twist, so from the formula \eqref{eq:cell_system_twist}, we get an $\check{R}^{\rm{ell}}_D$ defined on the space $V^{\pi_D}$.
\end{prop}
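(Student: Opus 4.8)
The plan is to obtain this proposition by assembling three facts already established in Section~\ref{sec:example_cell_system}: the preceding lemma that $\widetilde{C}$ is invertible, the preceding proposition of Roche that $\widetilde{C}$ is a cell system, and the general proposition of that section stating that an invertible cell system is a cell twist. First I would note that the invertibility lemma supplies a two-sided inverse $\widetilde{C}^{-1}$, while Roche's proposition supplies the weight-zero relations \eqref{eq:cell_system_twist}. A two-sided inverse is in particular a left inverse, so the hypotheses of the proposition ``an invertible $c$ is a cell twist'' hold verbatim, and invoking it gives that $\widetilde{C}$ is a cell twist. This is exactly where the invertibility lemma is indispensable, as emphasized in the remark following that proposition: the unitarity built into a cell system only yields left-invertibility, whereas the twist construction needs the full inverse.

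Next I would feed the cell-twist property into the quasi-unique twist machinery of Section~\ref{sub:quasi_unique}. By Lemma~\ref{lem:cell_lemma_connecting_system} the target groupoid carries a quasi-unique connecting system whose distinguished paths start at the initial node $*_2$. Setting $j=\widetilde{C}^{-1}*_{\otimes}\widetilde{C}^{-1}$, which is right invertible with inverse $\widetilde{C}*_{\otimes}\widetilde{C}$ by Lemma~\ref{lem:factorization_invertible}, the cell-twist property produces the auxiliary maps $q_1$, taken to be an iterated $*_{\otimes}$-power of $\widetilde{C}^{-1}$, and $q_2=\widetilde{C}^{-1}*_{\otimes}\widetilde{C}^{-1}*_{\otimes}\widetilde{C}^{-1}$, so that the ice-rule relations \eqref{eq:quntwist23}, \eqref{eq:quntwist12} and the factorization relations \eqref{eq:qutwist_q23}, \eqref{eq:qutwist_q12}, \eqref{twist_factorization} all follow from \eqref{eq:cell_system_twist} and the invertibility of $\widetilde{C}$. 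Hence $j$ is a twist for this quasi-unique connecting system.

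Finally, since the connecting groupoid $\pi$ has no multi-edge, the proposition on quasi-unique twists in Section~\ref{sub:quasi_unique} applies to $j$. It shows that the transfer operator $\check{R}_2=j^{-1}*_{\circ}(\check{R}_1 j)$ defined in \eqref{eq:cell_twist_R} induces, through the $\lhd$ map of Lemma~\ref{lemma_endo}, an operator $(\check{R}_2)_{\lhd}$ that satisfies the dynamical Yang--Baxter equation on the target groupoid graded vector space. This is precisely the dynamical operator $\check{R}^{\rm{ell}}_D$ on $V^{\pi_D}$ asserted in the statement.

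The genuine analytic work is not in this assembly but is already carried by the two cited preliminary results, which I would assume: the verification of the weight-zero relations \eqref{eq:cell_system_twist} (Roche's proposition) and the two-sided invertibility of $\widetilde{C}$ (the invertibility lemma). For the present proposition the only residual points are to confirm that the no-multi-edge hypothesis of the quasi-unique twist proposition holds for $\pi$, and that the factorization of $q$ into $q_1*_{\otimes}q_2$ in \eqref{twist_factorization} is legitimate; the latter is exactly the content of Lemma~\ref{lem:factorization_invertible}, guaranteeing that $*_{\otimes}$-products of right-invertible transfer operators stay right invertible with the fused inverse.
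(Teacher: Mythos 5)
Your proposal is correct and takes essentially the same route the paper intends: the paper states this proposition without a written proof, as an immediate assembly of Roche's cell-system proposition, the invertibility lemma for $\widetilde{C}$, the general proposition that an invertible cell system is a cell twist, and the quasi-unique twist machinery of subsections \ref{sub:quasi_unique} and \ref{sub:intertwiner_from_twist}. Your write-up simply makes that chain explicit (including the two residual checks, the no-multi-edge condition on $\pi$ and the factorization of $q$ via Lemma \ref{lem:factorization_invertible}), so it matches the paper's argument.
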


\subsection{Other cell systems}
There are many other important and interesting  intertwiners constructed by Pasquier \cite{Pasquier1988a} (this example maybe included as in subsection \ref{sec:dynamical_twist}, see \cite{Etingof1999} on discussion about the relation of twist with 3j symbols and 6j symbols), also in Roche \cite{Roche1990} and Francesco--Zuber \cite{DiFrancesco1990}.  It is not difficult to check that whether these intertwiners are from some cell twists, because the intertwiner relation are proved by the above authors, we only need to check the right invertiblity and also the quasi-unique connecting system condition.

\bibliographystyle{plain}
\bibliography{intertwiner_twist}
\end{document}